\setlist[enumerate]{noitemsep}
\setlist[description]{noitemsep}
\setlist[itemize]{noitemsep}
\theoremstyle{plain}
\newtheorem{theorem}{Theorem}
\newtheorem{proposition}{Proposition}
\theoremstyle{remark}
\newtheorem{remark}{Remark}
\DeclareMathAlphabet{\mathpzc}{OT1}{pzc}{m}{it}
\newcommand{\id}{\textnormal{id}} % Identity element/trivial walk
\newcommand{\Id}{\textnormal{Id}} % Identity matrix
\newcommand{\GLS}{\mathbb{N}}
\DeclareMathOperator{\normal}{\mathrm{N}} % probability density
\DeclareMathOperator{\diag}{diag}
\newcommand{\tildegG}{\,\tilde{\gG}}
\newcommand{\ingraph}[1]{~\textnormal{\bf in}\,#1}
\newcommand{\textnot}{\textnormal{\bf not}~}
\newcommand{\underdist}[1]{~\textnormal{\bf under}\,#1}
\newcommand{\DMG}{\mathbb{G}}
\newcommand{\DMGc}{\mathbb{G}^{\ast}}
\newcommand{\DiG}{\mathbb{G}^{\ast}_{\textnormal{D}}}
\newcommand{\ADMG}{\mathbb{G}_{\textnormal{A}}}
\newcommand{\ADMGc}{\mathbb{G}^{\ast}_{\textnormal{A}}}
\newcommand{\DAG}{\mathbb{G}^{\ast}_{\textnormal{DA}}}
\newcommand{\UG}{\mathbb{UG}}
\newcommand{\BDG}{\mathbb{G}^{\ast}_{\textnormal{B}}}
\let\P\relax
\DeclareMathOperator{\P}{\mathsf{P}} % probability distribution
\DeclareMathOperator{\marg}{margin}
\DeclareMathOperator{\trim}{trim}
\DeclareMathOperator{\augg}{augment}
\DeclareMathOperator{\gG}{\mathscr{G}}
\DeclareMathOperator{\sE}{\mathcal{E}}
\DeclareMathOperator{\sD}{\mathcal{D}}
\DeclareMathOperator{\sB}{\mathcal{B}}
\DeclareMathOperator{\sJ}{\mathcal{J}}
\DeclareMathOperator{\sK}{\mathcal{K}}
\DeclareMathOperator{\sL}{\mathcal{L}}
\DeclareMathOperator{\sW}{\mathcal{W}}
\DeclareMathOperator{\an}{\mathrm{an}}
\DeclareMathOperator{\Cov}{\mathsf{Cov}}
\DeclareMathOperator{\Var}{\mathsf{Var}}
\newcommand\independent{\protect\mathpalette{\protect\independenT}{\perp}}
\def\independenT#1#2{\mathrel{\rlap{$#1#2$}\mkern2mu{#1#2}}}
\providecommand{\leftsquigarrow}{%
  \mathrel{\mathpalette\reflect@squig\relax}%
}
\newcommand{\reflect@squig}[2]{%
  \reflectbox{$\m@th#1\rightsquigarrow$}%
}
\author{Qingyuan Zhao\thanks{Statistical Laboratory, University of Cambridge,
    qyzhao@statslab.cam.ac.uk.}}
\date{\today}
\title{A matrix algebra for graphical statistical models}
\begin{document}

\maketitle
\begin{abstract}
  Directed mixed graphs permit directed and bidirected edges between
  any two vertices. They were first considered in the path analysis
  developed by Sewall Wright and play an essential role in statistical
  modeling. We introduce a matrix algebra for walks on such
  graphs. Each element of the algebra is a matrix whose entries are
  sets of walks on the graph from the corresponding row to the
  corresponding column. The matrix algebra is then generated by applying
  addition (set union), multiplication (concatenation), and transpose
  to the two basic matrices consisting of directed and bidirected
  edges. We use it to formalize, in the context of Gaussian linear
  systems, the correspondence between important graphical concepts
  such as latent projection and graph separation with important
  probabilistic concepts such as marginalization and (conditional)
  independence. In two further examples regarding confounder
  adjustment and the augmentation criterion, we illustrate how the
  algebra allows us to visualize complex graphical proofs. A
  ``dictionary'' and \LaTeX macros for the matrix algebra are provided
  in the Appendix.
\end{abstract}

\tableofcontents

\section{Introduction}
\label{sec:introduction}

The idea of using graphs to describe statistical relationships between
random variables can be traced back to the method of path analysis by
\textcite{wrightRelativeImportanceHeredity1920,
  wright34_method_path_coeff}. Although this approach was originally
proposed for applications in genetics, it became very influential over
the next half century in economics
\parencite{haavelmoStatisticalImplicationsSystem1943} and the social sciences
\parencite{duncanIntroductionStructuralEquation1975,joreskogAdvancesFactorAnalysis1979,bollenStructuralEquationsLatent1989}
under the name ``structural/simultaneous equation models''. Typically,
an acyclic directed graph is used to represent the causal relationship
between random variables, and each variable is modeled as a linear
combination of its parents with some additive Gaussian noise.
Alternatively, directed graphs can be used as the basis of a
``Bayesian network'' to represent how probability distributions factorize
\parencite{pearlBayesianNetwcrksModel1985,pearlProbabilisticReasoningIntelligent1988,neapolitanProbabilisticReasoningExpert1989}. This
can be further extended to represent causality via the causal Markov assumption
\parencite{spirtesCausationPredictionSearch1993,pearlCausalDiagramsEmpirical1995}
or the nonparametric structural equation model
\parencite{pearlCausalityModelsReasoning2000}. The last model is
intimately related to the ``potential outcomes'' or counterfactual
approach for causal inference in statistics that can be traced back to
\textcite{neyman1923application} and
\textcite{rubin1974estimating}. % For more recent development, see
% \textcite{richardson2013single,richardsonNestedMarkovProperties2023}.

In the graphical models literature, a folklore is that many results for
nonparametric graphical models have their origins in Gaussian linear
systems. Often, results in Gaussian linear systems can be extended to
the nonparametric model with minor modifications. Some examples include global
identifiability
\parencite{drtonGlobalIdentifiabilityLinear2011,shpitserIdentificationGraphicalCausal2018},
nested Markov properties
\parencite{shpitserAcyclicLinearSEMs2018,richardsonNestedMarkovProperties2023},
proximal causal inference
\parencite{kurokiMeasurementBiasEffect2014,miaoIdentifyingCausalEffects2018},
sufficient adjustment sets
\parencite{perkovicCompleteGraphicalCharacterization2018}, and
efficient adjustment sets
\parencite{henckelGraphicalCriteriaEfficient2022,guoVariableEliminationGraph2023}. At
the very least, Gaussian linear systems provide concrete examples for more general
results about the nonparametric model, which is why
\textcite{pearlLinearModelsUseful2013} called linear models ``a useful
'microscope' for causal analysis''.

The purpose of this article is to develop a more formal connection
between Gaussian linear systems and nonparametric graphical
models. The main contribution is a matrix algebra, directly
motivated by Gaussian linear systems, that facilitates reasoning with
general graphical models. This algebra can help its users visualize
complex graphical concepts, build up intuitions, identify gaps in
existing proofs, and potentially derive new results.

The rest of this article is organized as
follows. \Cref{sec:gauss-line-syst} defines Gaussian linear systems
and reviews their probabilistic
properties. \Cref{sec:matr-algebra-direct} introduces the walk algebra
and \Cref{sec:import-types-walks} uses it to define some important
graphical concepts, which is then corresponded to probabilistic
concepts in \Cref{sec:conn-gauss-line} in the context of Gaussian
linear systems. \Cref{sec:further-examples} gives two more
``advanced'' examples to demonstrate how the matrix algebra
facilitates complex graphical reasoning. \Cref{sec:discussion}
concludes the article with some further discussion. As the main
purpose is to introduce and demonstrate the richness of the matrix
algebra, we will only sketch a proof for some of the technical results
below.

\section{Gaussian linear systems}
\label{sec:gauss-line-syst}

We say a real-valued $d$-dimensional random vector $V = (V_1, \dots, V_d)$ follows
a \emph{linear system} with parameters $\beta \in \mathbb{R}^{d
  \times d}$ and $\Lambda \in \mathbb{R}^{d \times d}$, $\Lambda$ is positive
semi-definite, if $V$ solves
\begin{equation}
  \label{eq:linear-system-matrix}
  V = \beta^T V + E,
\end{equation}
where $E$ is a $d$-dimensional random vector with mean $0$ and
covariance matrix $\Lambda$. We say the linear system is (principally)
\emph{non-singular} if (every principal sub-matrix of) $\Id - \beta$ is
invertible, and (principally) \emph{stable} if (every principal
sub-matrix of) $\beta$ has spectral radius less than $1$. We say the
linear system is \emph{Gaussian} if the distribution of $E$ is
multivariate normal.

When the linear system is non-singular and Gaussian, we immediately
have
\begin{equation}
  \label{eq:gls-dist}
  V = (\Id - \beta)^{-T} E \sim \normal(0,
  \Sigma),~\text{where}~\Sigma = (\Id - \beta)^{-T} \Lambda (\Id -
  \beta)^{-1},
\end{equation}
because the linear transformation of a normal random vector is still
normal. Here, $\Id$ is the identity matrix whose $(j,k)$-entry is
$\delta_{kW} = 1$ if $j = k$ and $0$ if $j \neq k$.
Thus, the probability distribution of $V$ is fully
characterized by its covariance matrix. The Neumann series $(\Id -
\beta)^{-1} = \sum_{q=0}^{\infty} \beta^q$, which holds when the
system is stable, allows us to connect Gaussian
linear systems to directed graphs. In particular, the covariance
matrix of $V$ has a graphical interpretation that shall be described
in detail \Cref{sec:conn-gauss-line}.

Marginalization is a basic operation in probability and statistics
that amounts to obtaining the ``marginal distribution'' of a subset of
random variables. Marginalization is particularly simple in Gaussian
models:
\[
  \text{if}~V \sim \normal(\mu, \Sigma),~\text{then}~V_{\sJ} \sim
  \normal(\mu_{\sJ}, \Sigma_{\sJ, \sJ}),
\]
where $\mu_{\sJ}$ and $\Sigma_{\sJ, \sJ}$ are the sub-vector/matrix
corresponding to the index set $\sJ \subseteq \{1,\dots,d\}$. This
will directly motivate a definition of marginalization for directed
graphs.

(Conditional) Independence is one of the key concepts that distinguish
probability from a general measure. Marginal and conditional
independence of random variables will be formally defined in the next
Section. In Gaussian models, marginal
independence can be read off from the covariance matrix: if $V \sim
\normal(\mu,\Sigma)$, then
\[
  V_{\sJ} \independent V_{\sK} \Longleftrightarrow \Sigma_{\sJ, \sK} = 0
\]
for all $\sJ, \sK \subseteq [d]$, where $\Leftrightarrow$ means
``if and only if''. It is well known that
pairwise conditional independence given all remaining variables can be
read off from the inverse covariance matrix: if $V \sim
\normal(\mu,\Sigma)$ and $\Sigma$ is positive definite, then
\[
  V_j \independent V_k \mid V_{[d] \setminus \{j,k\}}
  \Longleftrightarrow (\Sigma^{-1})_{jk} = 0
\]
for all $j,k \in [d]$, $j \neq k$. When a smaller subset of variables
$V_{\sL}$, $\sL \subseteq [d] \setminus \{j,k\}$ is conditioned on,
one can check the conditional independence $V_j \independent V_k \mid
V_{\sL}$ by first taking the principal sub-matrix of $\Sigma$ for $\sL
\cup \{j,k\}$ (corresponding to marginalization) and then checking the
$(j,k)$-entry of its inverse.

\section{The walk algebra on directed mixed graphs}
\label{sec:matr-algebra-direct}

Every Gaussian linear system defined in the previous section can be
associated with a mixed graph $\gG = (V, \sD, \sB)$; here the
denotation of the vertex set as $V$ is an intentional abuse of
notation. Specifically, the edge sets of $\gG$ is defined as
\begin{align*}
  (V_j,V_k) \in \sD~\text{(written as $V_j \rdedge V_k$)}
  \Longleftrightarrow \beta_{jk} \neq 0,\\
  (V_j,V_k) \in \sB~\text{(written as $V_j \bdedge V_k$)}
  \Longleftrightarrow \Lambda_{jk} \neq 0,
\end{align*}
for all $V_j,V_k \in V$. Because the covariance matrix $\Lambda$ of
$E$ is symmetric, edges in $\sB$ are symmetric. The choice of drawing
edges in $\sB$ as bidirected instead of undirected is intentional and
crucial, which is also why we shall call such $\gG$ is a
\emph{directed mixed graph} following
\textcite{richardson03_markov_proper_acycl_direc_mixed_graph}. We shall
see below that bidirected edges can be interpreted as latent variables
in a formal way. Note that loops are allowed in $\gG$. In fact,
bidirected loops (like $V_j \bdedge V_j$, meaning $V_j$ is influenced
by some exogenous noise) play an essential role in the development
below. There can be at most three edges between each pair of distinct
vertices: $V_j \rdedge V_k$, $V_j \ldedge V_k$, and $V_j \bdedge V_k$.

The matrix algebra introduced in this graph is generated by three basic
operations on two basic matrices. Each matrix in this algebra is $d
\times d$ with $(j,k)$-entry (or more accurately the
$(V_j,V_k)$-entry) being a set of walks from $V_j$ to $V_k$ for all
$j,k \in [d]$; a walk is an ordered sequence of connected
edges\footnote{Some authors use the term ``semi-walk'' to emphasize that edge
direction is ignored in determining connection.} and is best
defined using the concatenation operation introduced below. The two
basic matrices, $W[V\rdedge V \ingraph{\gG}]$ and $W[V\bdedge V
\ingraph{\gG}]$, collect the directed and bidirected edges
respectively (we will often omit the graph $\gG$ in the matrices if it
is clear from the context). Specifically, the $(j,k)$-entry of
$W[V\rdedge V]$ is given by
\begin{equation}
  \label{eq:directed-edges}
  W[V_j \rdedge V_k] =
  \begin{cases}
    \{V_j \rdedge V_k\},& \text{if}~(V_j,V_k) \in \sD, \\
    \emptyset,& \text{otherwise}, \\
  \end{cases}
\end{equation}
and the $(j,k)$-entry of $W[V\bdedge V]$ is given by
\begin{equation}
  \label{eq:bidirected-edges}
  W[V_j \bdedge V_k] =
  \begin{cases}
    \{V_j \bdedge V_k\},& \text{if}~(V_j,V_k) \in \sB, \\
    \emptyset,& \text{otherwise}. \\
  \end{cases}
\end{equation}
The three main binary operations on (sets of) walks are:
\begin{enumerate}
\item Set union (denoted as $+$): for example,
  \[
    \{V_2 \rdedge V_5\} + \{V_2 \rdedge V_3 \rdedge V_5\} = \{V_2
    \rdedge V_5,~V_2 \rdedge V_3 \rdedge V_5\}.
  \]
\item Concatenation of all pairs of walks from the two sets (denoted
  as $\cdot$): for example,
  \[
    \{V_2 \bdedge V_2\} \cdot \{V_2 \rdedge V_5, ~V_2 \rdedge V_3
    \rdedge V_5\} =
    \{V_2 \bdedge V_2 \rdedge V_5, ~V_2 \bdedge V_2 \rdedge V_3
    \rdedge V_5\}.
  \]
\item Transpose of a walk $w$, denoted as $w^T$, is obtained by
  writing it from right to left: for example,
\[
  \{V_2 \rdedge V_5, ~V_2 \rdedge V_3
  \rdedge V_5\}^T = \{V_5 \ldedge V_2,~V_5 \ldedge V_3 \ldedge V_2\}.
\]
\end{enumerate}
Using these, we can extend the definition of matrix addition,
multiplication, and transpose in linear algebra to matrices of sets of
walks:
\begin{align*}
  (W + W')[V_j,V_k] &= W[V_j,V_k] + W'[V_j,V_k], % \label{eq:matrix-addition}
  \\
  (W \cdot W')[V_j,V_k] &= \sum_{V_l \in V} W[V_j,V_l] \cdot W'[V_l,V_k] % = W[j,1]
  % \cdot W'[1,k] + \dots + W[j,p] \cdot
  % W'[p,k]
                          , % \label{eq:matrix-multiplication}
  \\
  (W^T)[V_j,V_k] &= (W[V_k,V_j])^T = \{w^T: w \in
  W[V_k,V_j]\}, % \label{eq:matrix-transpose}
\end{align*}
The \emph{walk algebra} over $\gG$ refers to
the collection of matrices that can be obtained by applying the above operations to $W[V
\rdedge V]$ and $W[V \bdedge V]$. It is easy to see that the
$(j,k)$-entry of every such matrix must be a set of walks from vertex $V_j$
to vertex $V_k$ or empty. Matrix subsetting (sub-matrices) can also be
defined in the natural way.

It is useful to also introduce, with an abuse of notation, the
identity matrix $\Id = \diag(\id, \dots, \id)$ for matrix
multiplication, where $\id$ is the trivial walk with length $0$ such
that
\begin{equation*}
  % \label{eq:identity-walk}
  \id \cdot w = w \cdot \id = w\quad\text{and}\quad \Id \cdot W =
  W \cdot \Id = W
\end{equation*}
hold for any walk $w$ and matrix $W$ of sets of walks. When every
entry of $W$ is empty, we write $W = \emptyset$. This is the identity
element for matrix addition (set union).

\begin{remark} \label{rem:dioid}
The matrix algebra defined above is a ``dioid'' in the
terminology of \textcite{gondranGraphsDioidsSemirings2008} which means
the following axiomatic properties are satisfied:
\begin{enumerate}
\item $+$ is a commutative monoid (associative with identity element
  $\emptyset$);
\item $\cdot$ is a monoid (associative with identity element $\Id$);
\item $\cdot$ is distributive with respect to $+$;
\item The pre-order defined by $+$ ($W \preceq W'$ if and only if $W'
  = W + W''$ for some $W''$) is anti-symmetric: $W \preceq W'$ and $W'
  \preceq W$ imply that $W = W'$.
\end{enumerate}
\textcite[p.\ 28]{gondranGraphsDioidsSemirings2008} also requires the
identity element of addition to be absorbing for multiplication; this
can be met by introducing the convention that $W \cdot \emptyset =
\emptyset \cdot W = \emptyset$ for all matrices $W$. The fundamental
difference between rings and dioids (which are both semi-rings) is
that addition $+$ is a commutative group in the former but an ordered
monoid in the latter. A classical example of the former is the ring of
real square matrices, and another example of the latter is the dioid
of nonnegative square matrices.
Semirings and dioids are extensively studied in automata and formal
language theory \parencite{lothaireAppliedCombinatoricsWords2005} and
path finding problems in graphs
\parencite{gondranGraphsDioidsSemirings2008}, so the definition of
walk algebra above cannot be new. What is likely new to researchers
familiar with those problems is the consideration of directed mixed
graphs and special graphical structures motivated by key probabilistic
concepts such as marginalization and independence.
\end{remark}

Let us introduce some important subclasses of graphs. Let
$\mathbb{G}(V)$ denote all directed mixed graphs with vertex set
$V$. The following subclasses of $\mathbb{G}(V)$ will be useful later:
\begin{itemize}
\item $\DMGc(V)$: the class of canonical graphs with vertex set
  $V$ (by \emph{canonical}, we mean the graph contains all bidirected loops,
  that is, $(V_j,V_j) \in \sB$ for all $V_j \in V$);
\item $\BDG(V)$: the class of bidirected canonical graphs with
  vertex set $V$ (i.e.\ the directed edge set $\sD = \emptyset$);
\item $\DiG(V)$: the class of canonically directed graphs with vertex
  set $V$ (by \emph{canonically directed}, we mean the graph is
  canonical and contains no bidirected edges other than bidirected
  loops);
\item $\ADMG(V)$: the class of acyclic graphs with vertex set $V$ (by
  \emph{acyclic}, we mean there exists no cyclic directed walk like
  $V_j \rdedge \dots \rdedge V_j$ for some $V_j \in V$);
\item $\ADMGc(V) = \DMGc(V) \cap \ADMG(V)$: the class of canonical
  acyclic graphs with vertex set $V$;
\item $\DAG(V) = \DiG(V) \cap \ADMG(V)$: the class of canonically
  directed and acyclic graphs with vertex set $V$.
\end{itemize}

By trimming the bidirected loops, $\DiG(V)$ and $\DAG(V)$ are
isomorphic to the class of directed graphs and directed acyclic graphs
(DAG), respectively. The central object studied in the statistical
theory for causality \parencite[see
e.g.][]{pearlCausalityModelsReasoning2000,richardsonNestedMarkovProperties2023}
is the class of acyclic directed mixed graphs (ADMGs) that can be
obtained by trimming $\ADMGc(V)$.

\begin{figure}[t] \centering
  \begin{subfigure}[t]{0.45\textwidth} \centering
    \[
    \begin{tikzcd}
      V_1 \arrow[rd, blue] \arrow[rr, red, leftrightarrow, bend left
      = 30] \arrow[rrrd, red, leftrightarrow, bend left
      = 60, end anchor=north] \arrow[loop left, leftrightarrow, red] & &
      V_4 % \arrow[rd, red, leftrightarrow,
      % bend left = 30]
      \arrow[loop left, leftrightarrow, red] &
      \\
      & V_3 \arrow[ru, blue] \arrow[rr, blue] \arrow[loop left,
      leftrightarrow, red] & & V_5 \arrow[loop right, leftrightarrow, red] \\
      V_2 \arrow[loop left, leftrightarrow, red] \arrow[ru, blue]
      \arrow[rrru, blue]  & & &
    \end{tikzcd}
    \]
    \caption{Canonical graph.% An canonical acyclic directed mixed graph.
    }
    \label{fig:admg-example-1-full}
  \end{subfigure}
  \quad
  \begin{subfigure}[t]{0.45\textwidth} \centering
    \[
    \begin{tikzcd}
      V_1 \arrow[rd, blue] \arrow[rr, red, leftrightarrow, bend left
      = 30] \arrow[rrrd, red, leftrightarrow, bend left
      = 60, end anchor=north] & & V_4 % \arrow[rd, red, leftrightarrow,
      % bend left = 30]
      &
      \\
      & V_3 \arrow[ru, blue] \arrow[rr, blue] & & V_5 \\
      V_2 \arrow[ru, blue] \arrow[rrru, blue] & & &
    \end{tikzcd}
    \]
    \caption{Trimmed graph.}
    \label{fig:admg-example-1-simple}
  \end{subfigure}

  \caption{Running example.}
  \label{fig:admg-example-1}
\end{figure}
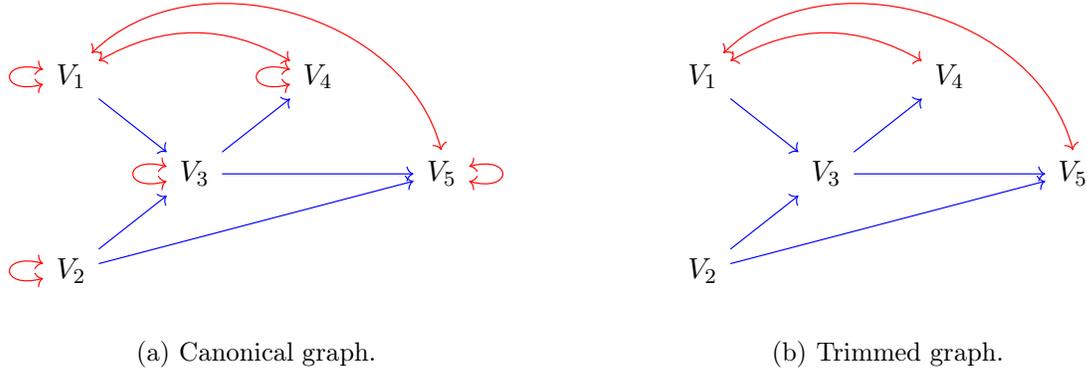

As an example, \Cref{fig:admg-example-1-simple} is obtained by
trimming the canonical graph in \Cref{fig:admg-example-1-full}, which
has the following directed edge matrix
  \[
    W[V \rdedge V] =
    \begin{pmatrix}
      \emptyset & \emptyset & \{V_1 \rdedge V_3\} & \emptyset &
      \emptyset \\
      \emptyset & \emptyset & \{V_2 \rdedge V_3\} & \emptyset &
      \{V_2 \rdedge V_5\} \\
      \emptyset & \emptyset & \emptyset & \{V_3 \rdedge V_4\} &
      \{V_2 \rdedge V_5\} \\
      \emptyset & \emptyset & \emptyset & \emptyset &
      \emptyset \\
      \emptyset & \emptyset & \emptyset & \emptyset & \emptyset
    \end{pmatrix}
  \]
and bidirected edge matrix
  \[
    W[V \bdedge V] =
    \begin{pmatrix}
      \{V_1 \bdedge V_1\} & \emptyset & \emptyset & \{V_1 \bdedge V_4\} &
      \{V_1 \bdedge V_5\} \\
      \emptyset & \{V_2 \bdedge V_2\} & \emptyset & \emptyset &
      \emptyset \\
      \emptyset & \emptyset & \{V_3 \rdedge V_3\} & \emptyset &
      \emptyset \\
      \{V_4 \bdedge V_1\} & \emptyset & \emptyset & \{V_4
      \bdedge V_4\} &
      \emptyset \\
      \{V_5 \bdedge V_1\} & \emptyset & \emptyset & \{V_5
      \bdedge V_4\} & \{V_5 \bdedge V_5\}
    \end{pmatrix}.
  \]
We will use this as a running example below.

\section{Important graphical concepts}
\label{sec:import-types-walks}

Many important concepts in graphical statistical models are about the
lack of certain type of walks in the graph between subsets of
vertices. Defining these graphical concepts in words is often awkward
and imprecise, but the matrix algebra introduced above allows us to
clearly define and visualize the graphical concepts.

\subsection{Special types of arcs}
\label{sec:arcs}

Given $\gG \in \DMG(V)$, \emph{(right-)directed walks} are defined as
elements of the matrix
\begin{equation}
  \label{eq:directed-walk}
  W[V \rdpath V] = \sum_{q = 1}^{\infty} (W[V
  \rdedge V])^q,
\end{equation}
where $(W[V \rdedge V])^q$ is obtained by
multiplying the directed edge matrix $q$ times and contains all
directed walks of length $q$. Thus, the graph $\gG$ is acyclic if and
only if the diagonal entries of $W[V \rdpath V]$ are
empty. \emph{Left-directed walks} can be defined as
elements of the entries of the transpose of \eqref{eq:directed-walk}:
\begin{equation}
  \label{eq:left-directed-walk}
  W[V \ldpath V] = (W[V \rdpath V])^T.
\end{equation}

\emph{Treks} or \emph{t-connected walks} are defined as elements of
\begin{align}
  W[V \trek V]
  =& (\Id + W[V \ldpath V]) \cdot W[V
     \bdedge V] \cdot (\Id + W[V
     \rdpath V]) \label{eq:trek} \\
  =& W[V \bdedge V] + W[V \bdedge V
     \rdpath V] + W[V \ldpath V \bdedge
     V] + W[V \ldpath V \bdedge V \rdpath
    V], \tag*{}
\end{align}
where $W[V \bdedge V \rdpath V]$ is a shorthand notation for $W[V
\bdedge V] \cdot W[V \rdpath V]$ and likewise for the other
matrices. This definition is directly motivated by the covariance
matrix of Gaussian linear systems in \eqref{eq:gls-dist} and this
connection will be explored in the next Section. We say a walk is
\emph{d-connected} if it can be obtained
from removing the bidirected edge in a trek. It follows from
\eqref{eq:trek} that
the matrix of d-connected walks can be written as
\begin{align}
  W[V \dconnarc V] =& (\Id + W[V \ldpath V]) \cdot (\Id +
                      W[V \rdpath V]) \setminus \Id \tag*{} \\
  =& W[V \ldpath V] + W[V \rdpath V] + W[V
     \ldpath V \rdpath V], \label{eq:dconn-walk}
\end{align}
where set difference $\setminus$ with the identity matrix $\Id$ in the
first equation is applied entry-wise. Half arrowheads are used in $W[V
\dconnarc V]$ to emphasize that these walks may or may not have an
arrowhead at the two ends. We say a walk is an \emph{arc} or
\emph{m-connected} if it is a trek or d-connect, so it belongs to
\begin{equation}
  \label{eq:mconn-walk}
  W[V \mconnarc V] = W[V \trek V] + W[V \dconnarc V].
\end{equation}

A key notion in graphical statistical models is collider. We say a
vertex $V_j$ is a \emph{collider} in a walk if the walk contains two
arrowheads ``colliding'' at $V_j$ like
\[
  \rdedge V_j \ldedge,~\bdedge V_j
  \ldedge,~\text{or}~\bdedge V_j \bdedge.
\]
We use a squiggly line to $\nosquigno$ to denote a walk with no
collider. The next result justifies this and further characterizes
t/d/m-connections.

\begin{proposition} \label{prop:arc-collider}
  The following statements are true in any directed mixed graph.
  \begin{enumerate}
  \item A walk is t-connected (aka a trek) if and only if it has no
    collider and exactly one bidirected edge.
  \item A walk is d-connected if and only if it contains no
    collider and no bidirected edge.
  \item A walk is m-connected (aka an arc) if and only if it contains
    no collider.
  \end{enumerate}
\end{proposition}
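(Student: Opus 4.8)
The plan is to reduce all three parts to one combinatorial normal form for collider-free walks and then read each equivalence off the definitions \eqref{eq:trek}, \eqref{eq:dconn-walk}, and \eqref{eq:mconn-walk}. The central claim I would establish is: a walk of length at least one has no collider if and only if its sequence of edge types has the shape $\ldedge^{p}\,\bdedge^{\varepsilon}\,\rdedge^{q}$---using exponents to abbreviate repeated edges, with $p,q\ge 0$ and $\varepsilon\in\{0,1\}$---that is, a (possibly empty) run of left-directed edges, then at most one bidirected edge, then a (possibly empty) run of right-directed edges. The trivial walk $\id$ is set aside throughout: it vacuously has no collider yet is excluded from every matrix by the $\setminus\Id$ convention, so the equivalences are understood for positive-length walks. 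Once the claim is proved, the three parts follow by matching this shape against the definitions.

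To prove the claim I would first localize the collider condition. Encode each edge by the pair of marks---\emph{tail} or \emph{arrowhead}---at its two endpoints: $\rdedge$ is tail--arrowhead, $\ldedge$ is arrowhead--tail, and $\bdedge$ is arrowhead--arrowhead. At the internal vertex shared by consecutive edges $e_t,e_{t+1}$, a collider is exactly an arrowhead of $e_t$ meeting an arrowhead of $e_{t+1}$; hence there is no collider at that junction if and only if $e_t$ has a tail at its right end or $e_{t+1}$ has a tail at its left end, i.e.\ $e_t=\ldedge$ or $e_{t+1}=\rdedge$. Equivalently, the only forbidden adjacent pairs are those in which a right-arrowhead edge ($\rdedge$ or $\bdedge$) is immediately followed by a left-arrowhead edge ($\ldedge$ or $\bdedge$).

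The heart of the matter is a monotonicity observation: in a collider-free walk, once an edge with a right arrowhead occurs, every later edge must be $\rdedge$. Indeed, if $e_t\in\{\rdedge,\bdedge\}$ then avoiding the collider at the following junction forces $e_{t+1}=\rdedge$, which again has a right arrowhead, so the conclusion propagates by induction. Letting $e_s$ be the first edge that is not $\ldedge$ (if one exists), it follows that $e_1,\dots,e_{s-1}$ are all $\ldedge$, that $e_{s+1},\dots,e_n$ are all $\rdedge$, and that $e_s\in\{\rdedge,\bdedge\}$; if no such $e_s$ exists the walk is a run of $\ldedge$'s. In every case the edge word has the asserted shape, and in particular carries at most one bidirected edge. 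The converse is a direct check that each junction appearing in this shape---$\ldedge\ldedge$, $\ldedge\bdedge$, $\bdedge\rdedge$, $\ldedge\rdedge$, and $\rdedge\rdedge$---is a permitted pair.

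With the normal form in hand, the parts follow. Expanding \eqref{eq:trek}, the matrix $W[V\trek V]$ collects exactly the walks made of a possibly-empty left-directed walk, one bidirected edge, and a possibly-empty right-directed walk (the two $\Id$ summands supplying the empty portions)---precisely the words $\ldedge^{p}\,\bdedge\,\rdedge^{q}$---which are the collider-free walks with exactly one bidirected edge, giving part~1. In \eqref{eq:dconn-walk}, the terms $W[V\ldpath V]+W[V\rdpath V]+W[V\ldpath V\rdpath V]$, with $\setminus\Id$ deleting the empty word, collect exactly the positive-length words $\ldedge^{p}\,\rdedge^{q}$, the collider-free walks with no bidirected edge, giving part~2. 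For part~3, since the normal form already forces $\varepsilon\le 1$, \emph{no collider} is the same as \emph{no collider with at most one bidirected edge}, i.e.\ t-connected or d-connected, which is m-connection by \eqref{eq:mconn-walk}. I expect the main obstacle to be the forward direction of the claim---pinning down the monotonicity induction so that it genuinely forbids any ``second turn'' after a rightward or bidirected edge---together with verifying that the concatenations in \eqref{eq:trek} and \eqref{eq:dconn-walk} enumerate each normal-form walk exactly once and that the degenerate trivial walk is correctly handled by the $\setminus\Id$ convention.
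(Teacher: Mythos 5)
Your proof is correct, and the paper itself gives no proof of this proposition (it is one of the results stated without argument, implicitly left as a direct consequence of the definitions); your normal-form decomposition $\ldedge^{p}\,\bdedge^{\varepsilon}\,\rdedge^{q}$ via the monotonicity induction is exactly the verification the paper leaves to the reader, and it is complete, including the correct handling of the trivial walk and the observation that multiplicities in the matrix products are irrelevant because entries are sets. One point worth flagging: your junction analysis treats a collider as any meeting of two arrowheads, which makes $\rdedge V_j \bdedge$ a collider, whereas the paper's displayed list of collider patterns gives only $\rdedge V_j \ldedge$, $\bdedge V_j \ldedge$, and $\bdedge V_j \bdedge$. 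Your reading is the right one --- under the literal three-pattern list the walk $\rdedge V_j \bdedge$ would be collider-free yet belong to none of the three matrices, so the proposition would fail --- and the omission in the paper is evidently a typo (the missing pattern is the reversal of $\bdedge V_j \ldedge$); it would be worth stating explicitly that you are using the symmetric ``two arrowheads meet'' definition.
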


Conditioning is a key concept in probability theory, and there is an
intimately related concept in causal graphical models. We say an arc
in $\gG \in \DMG(V)$ is \emph{blocked} by $L \subseteq V$ if the arc
has an non-endpoint that is in $L$.  So the walks $V_1
\rdedge V_3 \rdedge V_4$ and $V_4 \ldedge V_3 \bdedge V_3
\rdedge V_5$ are blocked by $V_3$. Heuristically, conditioning on a
non-endpoint stops information from flowing along that arc, hence the
name blocking.

We introduce some additional notation related to blocking. Let
$W[V \rdpath V \mid L \ingraph{\gG}]$ denote the matrix whose
$(j,k)$-entry is the set of right directed walks from $V_j$ to $V_k$ in
$\gG$ that are not blocked by $L$. Using our matrix notation, this can
be written as
\begin{equation}
  \label{eq:unblocked-directed-walks}
  W[V \rdpath V \mid L] = W[V \rdedge V] +
  W[V \rdedge L^c] \cdot (\Id + W[L^c \rdpath L^c \mid L]) \cdot
  W[L^c \rdedge V],
\end{equation}
where $L^c = V \setminus L$ and
\begin{equation}
  \label{eq:unblocked-directed-walk}
  W[L^c \rdpath L^c \mid L]) = \sum_{q=1}^{\infty} (W[L^c
  \rdedge L^c])^q.
\end{equation}
This notation naturally extends to other types of arcs. For example,
$W[V \trek V \mid L \ingraph{\gG}]$ is the matrix of unblocked treks
and can be formally defined as
\begin{equation}
  \label{eq:unblocked-treks}
  \begin{split}
    W[V \trek V \mid L]
    =& W[V \bdedge V] + W[V
    \bdedge L^c] \cdot W[L^c \rdpath V \mid L] +
    W[V \ldpath L^c] \cdot W[L^c \bdedge
    V \mid L] \\
    &+ W[V \ldpath L^c \mid L] \cdot W[L^c
    \bdedge L^c] \cdot W[L^c \rdpath
    V \mid L].
  \end{split}
\end{equation}

The next result relate these different notions of connections in
canonical graphs. Let $\trim$ denote the operation that removes all
bidirected loops in a graph. Let $\mathcal{P}_{\gG}$ denote the set of
all paths in $\gG$, and let ``$P$ matrices'' be defined as the intersection of $\mathcal{P}_{\gG}$
with the corresponding ``$W$ matrices''; for example,
\begin{align}
  P[V \rdpath V] &= W[V \rdpath V] \cap \mathcal{P}_{\gG},\label{eq:directed-path}\\
  P[V \trek V] &= W[V \trek V] \cap
                 \mathcal{P}_{\gG}.\label{eq:trekpath-given-L}
\end{align}
where intersection is applied entry-wise. Obviously, $\gG$ is acyclic
if and only if $W[V \rdpath V] = P[V \rdpath V]$.

\begin{proposition} \label{prop:trek-exist}
  Consider $\gG \in \DMG(V)$ and any disjoint $\{j\}, \{k\}, L \subset
  V$. If $\gG \in \DMGc(V)$, then the following are equivalent:
  \begin{enumerate}
  \item $j \trek k \mid L \ingraph{\gG}$;
  \item $j \mconnarc k \mid L \ingraph{\gG}$;
  \item $P[j \mconnarc k \mid L  \ingraph{\gG}] \neq \emptyset$;
  \item $j \mconnarc k \mid L \ingraph{\trim(\gG)}$;
  \item $P[j \mconnarc k \mid L \ingraph{\trim(\gG)}] \neq \emptyset$.
  \end{enumerate}
  If further $\gG \in \DAG(V)$, these statements are also equivalent to
  \begin{enumerate}[resume]
  \item $j \dconnarc k \mid L \ingraph{\gG}$;
  \item $P[j \dconnarc k \mid L  \ingraph{\gG}] \neq \emptyset$;
  \item $j \dconnarc k \mid L \ingraph{\trim(\gG)}$;
  \item $P[j \dconnarc k \mid L \ingraph{\trim(\gG)}] \neq \emptyset$.
  \end{enumerate}
\end{proposition}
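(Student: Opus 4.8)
The plan is to prove the chain of equivalences by first fixing the normal form of an m-connected walk and then separately controlling the three features that distinguish the nine statements: the \emph{type} of walk (trek vs.\ d-connected vs.\ general arc), whether we demand a \emph{path} rather than a walk, and whether we work in $\gG$ or in $\trim(\gG)$. By \Cref{prop:arc-collider}, every arc has exactly the shape $j \ldpath m \rdpath k$ (a single ``peak'' at $m$, no bidirected edge, i.e.\ d-connected) or $j \ldpath a \bdedge b \rdpath k$ (exactly one bidirected edge, i.e.\ a trek); this normal form is what reduces the remaining work to bookkeeping.

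I would dispatch the easy inclusions first: a trek is an arc, so (1)$\Rightarrow$(2); a path is a walk, so (3)$\Rightarrow$(2) and (5)$\Rightarrow$(4); and since $\trim(\gG)$ has an edge-subset of $\gG$ with identical collider and blocking behaviour, any arc (or arc-path) of $\trim(\gG)$ is one of $\gG$, giving (4)$\Rightarrow$(2). For the converse (2)$\Rightarrow$(1) I use that $\gG$ is canonical: an arc that is already a trek is done, and a d-connected arc $j \ldpath m \rdpath k$ becomes a trek after inserting the loop $m \bdedge m$ (present because $\gG \in \DMGc(V)$) at the peak $m$; one checks this introduces no collider, and since $m$ is either an endpoint or a non-endpoint already avoiding $L$, unblockedness is preserved. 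For (2)$\Rightarrow$(4) I push an arc of $\gG$ into $\trim(\gG)$: a d-connected arc uses only directed edges and survives trimming verbatim; a trek on a non-loop bidirected edge survives likewise; and a trek on a loop $a \bdedge a$ becomes, after deleting the loop, the d-connected arc $j \ldpath a \rdpath k$ of $\trim(\gG)$. The remaining implication (4)$\Rightarrow$(5) is the walk-to-path reduction below, carried out inside $\trim(\gG)$.

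The heart of the argument, and the step I expect to be the main obstacle, is the walk-to-path reduction (2)$\Rightarrow$(3) (and its d-connected analogue (6)$\Rightarrow$(7)), because naive shortcutting of a no-collider walk can create a collider. My plan is to take a \emph{shortest} unblocked arc $w = j \ldpath a \,[\bdedge b]\, \rdpath k$ and show it has no repeated vertex. Writing the left part as a directed walk $a \rightsquigarrow j$ and the right part as $b \rightsquigarrow k$, a repeat internal to one part is a directed cycle whose excision gives a shorter unblocked arc with the same endpoints, contradicting minimality. A genuine repeat \emph{across} the two parts, say a vertex $v$ occurring in both, exposes $v$ as a common ancestor of $j$ and $k$; splicing the directed sub-walks $v \rightsquigarrow j$ and $v \rightsquigarrow k$ yields a d-connected arc $j \ldpath v \rdpath k$ that is strictly shorter than $w$ (a genuine cross-repeat always lets us drop the bidirected edge and/or a nonempty end-segment) and is still unblocked, since $v$ and the surviving interior vertices already avoid $L$ — again a contradiction. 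The loop-trek case $a=b$ is just the cross case with $v=a=b$. Hence the minimal arc is a path; run on a d-connected input the same reduction never leaves the d-connected class, which settles (6)$\Rightarrow$(7).

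It remains to assemble the cycles and incorporate statements (6)–(9) under $\gG \in \DAG(V)$. The implications above close $(1)\Leftrightarrow(2)\Leftrightarrow(3)\Leftrightarrow(4)\Leftrightarrow(5)$ for the canonical block. For the stronger hypothesis $\DAG(V) = \DiG(V) \cap \ADMG(V)$, the operative feature is the canonically-directed part: the only bidirected edges are loops, so every trek is exactly a d-connected arc with a loop inserted at its peak. Deleting (respectively, inserting, using canonicity) this loop gives $(1)\Leftrightarrow(6)$; and because d-connected arcs never use bidirected edges they are literally the same objects in $\gG$ and in $\trim(\gG)$, which yields $(6)\Leftrightarrow(8)$ for free, while the path reduction gives $(6)\Leftrightarrow(7)$ and $(8)\Leftrightarrow(9)$, with the trivial path-is-walk inclusions (7)$\Rightarrow$(6) and (9)$\Rightarrow$(8). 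Chaining everything through $(1)\Leftrightarrow(6)$ then makes all nine statements equivalent, with acyclicity entering only to guarantee that $\trim(\gG)$ is an ordinary DAG and that the directed sub-walks above reduce to directed paths.
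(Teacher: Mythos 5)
Your proposal is correct. The paper itself gives no proof of this proposition (it is one of the results whose proof is only announced as ``sketched''), so there is nothing to compare against; your argument supplies the missing details in the natural way. The two points that genuinely need care are both handled properly: (i) the walk-to-path reduction, where you correctly observe that shortcutting an arbitrary unblocked walk can create colliders, and avoid this by exploiting the arc normal form of \Cref{prop:arc-collider} --- within a directed leg a repeat is a cycle whose excision joins two like-oriented edges (no new collider), and a cross-leg repeat at $v$ joins $\ldedge v$ with $v \rdedge$ (no arrowhead at $v$ at all), so the shortest unblocked arc is a path; and (ii) the loop-insertion step for (2)$\Rightarrow$(1), where the inserted copy of the peak $m$ becomes a non-endpoint of the trek even when $m \in \{j,k\}$, but unblockedness survives because $\{j\},\{k\},L$ are assumed disjoint. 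One could quibble that your closing remark overstates the role of acyclicity --- the equivalence of (1)--(5) with (6)--(9) really only uses that $\gG$ is canonically directed (so every trek's bidirected edge is a loop at the root), and your own reduction never needs $\trim(\gG)$ to be a DAG --- but this only means the hypothesis $\gG \in \DAG(V)$ is stronger than what your proof requires, which is harmless.
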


\subsection{Marginalization of directed mixed graphs}
\label{sec:marg-direct-mixed}

As mentioned above, bidirected edges in directed mixed graphs
represent latent variables intuitively. To formalize this intuition,
it is helpful to think about the existence of certain types of walks
as a relation. For example, define
\[
  J \rdedge K \ingraph{\gG} \Longleftrightarrow W[J
  \rdedge K \ingraph{\gG}] \neq \emptyset;
\]
that is, $J \rdedge K \ingraph{\gG}$ means that there exists an edge
$V_j \rdedge V_j$ for $V_j \in J$ and $V_k \in K$. Other relations can
be defined similarly; for example,
\begin{align*}
  J \rdpath K \mid L\ \text{in}\ \gG &\Longleftrightarrow \sW[J
                                       \rdpath K \mid L \ingraph{\gG}] \neq \emptyset, \\
  J \trek K \mid L \ \text{in}\ \gG &\Longleftrightarrow \sW[J
                                      \trek K \mid L \ingraph{\gG}]
                                      \neq \emptyset.
\end{align*}
We use the convention that none of the above connections holds when
$J$ or $K$ is empty.

\emph{Marginalization} (or \emph{latent projection}) of directed mixed
graphs was first introduced by
\textcite{vermaEquivalenceSynthesisCausal1990}. Formally, it is a
mapping between graphs
  \begin{align*}
    \marg_{\tilde{V}}: \DMG(V) &\to \DMG(\tilde{V}),    \\
    \gG &\mapsto \tilde{\gG},
  \end{align*}
where $\tilde{V} \subseteq V$ and the edges in $\tilde{\gG} =
\marg_{\tilde{V}}(\gG)$ are obtained by
\begin{align*}  % \label{eq:latent-proj-right}
  j \rdedge k \ingraph{\tildegG}&\Longleftrightarrow j
                                  \rdpath k \mid \tilde{V}
                                  \ingraph{\gG},\quad j,k \in
                                  \tilde{V}, \\ % \quad
                                  % \text{and} \quad
  j
  \bdedge k \ingraph{ \tildegG} &\Longleftrightarrow j
                                  \trek k \mid \tilde{V}
                                  \ingraph{\gG},\quad j,k \in
                                  \tilde{V}. % \label{eq:latent-proj-leftright}
  \end{align*}
These equations can also also be viewed as a map from
walks in $\gG$ to walks in $\tilde{\gG}$ and written as
\begin{equation} \label{eq:latent-proj-walk}
  \marg_{\tilde{V}}:~W\Bigg[\tilde{V}
  \begin{Bmatrix}
    \rdpath \\
    \ldpath \\
    \trek \\
  \end{Bmatrix}
  \tilde{V} \mid \tilde{V} \ingraph{\gG}\Bigg] \mapsto W\Bigg[\tilde{V}
  \begin{Bmatrix}
    \rdedge \\
    \ldedge \\
    \bdedge \\
  \end{Bmatrix}
  \tilde{V} \ingraph{\tildegG}\Bigg],
\end{equation}
where $\marg_{\tilde{V}}$ is applied to each entry of each row in this
equation. This has a natural extension to those walks in $\gG$ that
can be written as the concatenation of one or multiple arcs (left hand
side of \eqref{eq:latent-proj-walk}) separated by colliders. %  can be
% written as the concatenation of walks on the left hand side of
% \eqref{eq:latent-proj-walk}
The marginal walk is simply defined by concatenating the margin of each
component; for example,
\[
  \marg_{\tilde{V}}(W[\tilde{V} \rdpath \tilde{V} \mid \tilde{V} \ingraph{\gG}
  ] \cdot W[\tilde{V} \ldpath \tilde{V} \mid \tilde{V} \ingraph{\gG}
  ]) = W[\tilde{V} \rdedge \tilde{V} \ingraph{\tildegG}
  ] \cdot W[\tilde{V} \ldedge \tilde{V} \ingraph{\tildegG}
  ].
\]
Note that not all
walks in $\gG$ have a marginal walk (or we can say their margin walk
is the empty set). For
example, walks in $W[\tilde{V} \rdedge U \ldedge
\tilde{V} \ingraph{\gG}]$ where $U = V \setminus \tilde{V}$ cannot be
written as the concatenation of walks on the left hand side of
\eqref{eq:latent-proj-walk}. On the other hand, every walk in
$\tilde{\gG}$ must have at least one non-empty pre-image in $\gG$.

Marginalization preserves directed walks and treks in the following sense.

% With these definitions in mind, the next result further generalizes
% \Cref{thm:latent-proj-linear-system}.

\begin{proposition}[Marginalization preserves unblocked directed walks
  and treks] \label{prop:marg-preserve-walks-1}
  % Under the same setting in \Cref{thm:latent-proj-linear-system},
  % consider any $L \subseteq \tilde{V}$. Then
  % \begin{align*}
      %       \tilde{\sigma}\Bigg(W \Bigg[\tilde{V}
      %       \begin{Bmatrix}
      %       \rdpath \\
      %       \ldpath \\
      %       \trek
      %       \end{Bmatrix}
      %       \tilde{V} \mid L
      %       \ingraph{\tildegG}\Bigg]\Bigg) =& \sigma\Bigg(W\Bigg[\tilde{V}
                                                %                                                 \begin{Bmatrix}
                                                %                                                 \rdpath \\
      %       \ldpath \\
      %       \trek
      %       \end{Bmatrix}
      %       \tilde{V} \mid L \ingraph{\gG}\Bigg]\Bigg), % \\
      %   %       \tilde{\sigma}(W[\tilde{V} \rdpath \tilde{V} \mid L
      %   %       \ingraph{\tildegG}]) =& \sigma(W[\tilde{V} \rdpath
      %   %       \tilde{V} \mid L \ingraph{\gG}]), \\
      %   %       \tilde{\sigma}(W[\tilde{V} \trek \tilde{V} \mid L
      %   %       \ingraph{\tildegG}]) =& \sigma(W[\tilde{V} \trek
      %   %       \tilde{V} \mid L \ingraph{\gG}]).
                                          %     \end{align*}
                                          %                                           where the equality holds for each row of this equation.
                                          %                                           Furthermore,
  For any $\gG \in \DMG(V)$ and $L \subseteq \tilde{V} \subseteq
  V$, we have
  \begin{align*}
    \marg_{\tilde{V}}\left(W \left[\tilde{V}
    \begin{Bmatrix}
      \rdpath \\
      \ldpath \\
      \trek
    \end{Bmatrix}
    \tilde{V} \mid L
    \ingraph{\gG} \right] \right) &= W \left[\tilde{V}
    \begin{Bmatrix}
      \rdpath \\
      \ldpath \\
      \trek
    \end{Bmatrix}
    \tilde{V} \mid L
    \ingraph{\marg_{\tilde{V}}(\gG)} \right],
  \end{align*}
  where the equality hold for each row of the equation.
  As a consequence, for any $J,K,L \subseteq \tilde{V} \subseteq V$ we
  have the following equivalences:
  \[
    J
    \begin{Bmatrix}
      \rdpath \\
      \ldpath \\
      \trek
    \end{Bmatrix}
    K \mid L \ingraph{\marg_{\tilde{V}}(\gG)}
    \Longleftrightarrow J     \begin{Bmatrix}
      \rdpath \\
      \ldpath \\
      \trek
    \end{Bmatrix} K \mid L \ingraph{\gG}.
  \]
\end{proposition}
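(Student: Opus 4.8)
The plan is to exhibit $\marg_{\tilde{V}}$ as a homomorphism of the walk algebra on the relevant walks and then to match Neumann-series expansions of the two sides. Write $U = V \setminus \tilde{V}$ and $M = \tilde{V} \setminus L$. The base observation is that conditioning on all of $\tilde{V}$ forces every non-endpoint of a directed walk or trek into $U$, so that by the definition of $\tildegG$ in \eqref{eq:latent-proj-walk},
\[
  \marg_{\tilde{V}}\big(W[\tilde{V}\rdpath\tilde{V}\mid\tilde{V}\ingraph{\gG}]\big)=W[\tilde{V}\rdedge\tilde{V}\ingraph{\tildegG}]
\]
and
\[
  \marg_{\tilde{V}}\big(W[\tilde{V}\trek\tilde{V}\mid\tilde{V}\ingraph{\gG}]\big)=W[\tilde{V}\bdedge\tilde{V}\ingraph{\tildegG}],
\]
with the $\ldpath$ identity following from the first by transpose, since $\marg_{\tilde{V}}$ commutes with $\,\cdot^T$. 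These through-$U$ matrices are the \emph{building blocks}.

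Next I would verify the homomorphism property: if $w_1$ and $w_2$ are decomposable walks meeting at a vertex of $\tilde{V}$ that is not a collider of $w_1 \cdot w_2$, then $\marg_{\tilde{V}}(w_1 \cdot w_2) = \marg_{\tilde{V}}(w_1)\cdot\marg_{\tilde{V}}(w_2)$. This is immediate from the definition of the marginal walk as the concatenation of the margins of its components, and by distributivity of $\cdot$ over $+$ it extends entrywise to a homomorphism on the matrices generated by the building blocks.

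The heart of the argument is a \emph{canonical decomposition}. Every $L$-unblocked directed walk between two vertices of $\tilde{V}$ factors uniquely into maximal through-$U$ segments joined at its intermediate $\tilde{V}$-vertices, each of which lies in $M$ because the walk is unblocked by $L$ and $\tilde{V}$ is the disjoint union of $L$ and $M$; no such join vertex is a collider. Writing $A = W[\tilde{V}\rdpath\tilde{V}\mid\tilde{V}\ingraph{\gG}]$ and using subscripts for submatrices, this yields the Neumann-type expansion
\[
  W[\tilde{V}\rdpath\tilde{V}\mid L\ingraph{\gG}] = A + A_{\tilde{V},M}\cdot\Big(\sum_{q\ge0}(A_{M,M})^q\Big)\cdot A_{M,\tilde{V}},
\]
mirroring \eqref{eq:unblocked-directed-walks}. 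The identical formal expansion holds in $\tildegG$ with $A$ replaced by $W[\tilde{V}\rdedge\tilde{V}\ingraph{\tildegG}]$, since an $L$-unblocked directed walk of $\tildegG$ is exactly a sequence of $\tildegG$-edges through $M$. Applying the homomorphism of the previous step together with the base-case identification of $A$ with the edge matrix, the directed row follows; the $\ldpath$ row follows by transpose. The trek row is identical in spirit: an $L$-unblocked trek factors into a left-directed building-block chain, one through-$U$ trek block carrying the unique bidirected edge, and a right-directed building-block chain, all joined at $M$-vertices, giving the expansion \eqref{eq:unblocked-treks}, which transports under $\marg_{\tilde{V}}$ in the same way.

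The claimed equivalences are then read off the matrix equality: for $J,K,L\subseteq\tilde{V}$ the relation holds in $\tildegG$ iff the $(J,K)$-block of the right-hand matrix is non-empty, and since every $L$-unblocked walk between $\tilde{V}$-vertices is decomposable (hence has a non-empty margin) while every walk of $\tildegG$ has a non-empty pre-image in $\gG$, non-emptiness of a block transfers in both directions. I expect the main obstacle to lie in the trek case: one must check that the unique bidirected edge is confined to a single through-$U$ block, that the two directed arms decompose exactly as in the directed case, and that the peak of the trek is accounted for correctly so that all join vertices in $M$ remain non-colliders and $\marg_{\tilde{V}}$ stays a homomorphism throughout. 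Getting this endpoint/non-endpoint bookkeeping at the bidirected edge precisely right is the delicate point.
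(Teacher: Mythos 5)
The paper states this proposition without giving a proof, so there is nothing to compare against directly; your argument is correct and is precisely the one the paper's machinery is set up to support --- identifying through-$\tilde{V}$-unblocked segments with edges of the marginal graph via \eqref{eq:latent-proj-walk}, splitting an $L$-unblocked walk at its $\tilde{V}$-non-endpoints (all of which lie in $\tilde{V}\setminus L$ and are non-colliders), and matching the resulting expansion with \eqref{eq:unblocked-directed-walks} and \eqref{eq:unblocked-treks} read in $\marg_{\tilde{V}}(\gG)$, with surjectivity onto the right-hand side supplied by the fact that every edge of the marginal graph has a non-empty pre-image. The endpoint bookkeeping you flag at the bidirected edge is settled by \Cref{prop:arc-collider}: treks contain no colliders, so every $\tilde{V}$-non-endpoint of an $L$-unblocked trek is a non-collider in $\tilde{V}\setminus L$, and the unique segment carrying the bidirected edge is itself a through-$\tilde{V}$-unblocked trek mapping to a single bidirected edge.
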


It follows from setting $J = K = \{V_j\}$ and $L = \emptyset$ in
\Cref{prop:marg-preserve-walks-1} that marginalization preserves
acyclicity of the
graph:
\[\gG \in \ADMG(V) \Longrightarrow \marg_{\tilde{V}}(\gG) \in
  \ADMG(\tilde{V}),
\]
where $\Longrightarrow$ means ``implies''. Marginalization is
obviously idempotent: $\marg_{\tilde{V}} \circ
\marg_{\tilde{V}} = \marg_{\tilde{V}}$ for
all $\tilde{V} \subseteq V$, where $\circ$ means composition of
maps. Another corollary of Proposition
\ref{prop:marg-preserve-walks-1} is that the
order of marginalization does not matter in the sense that
\begin{equation*}
  % \label{eq:marginalization-order}
  \marg_{\tilde{V}'} = \marg_{\tilde{V}'} \circ
  \marg_{\tilde{V}},~\text{for all}~\tilde{V}' \subseteq \tilde{V}
  \subseteq V.
\end{equation*}
This shows that marginalization of directed mixed graphs, just like
marginalization of probability distributions, is like a projection.

Marginalization can be simplified in acyclic canonical graphs (aka
ADMGs). Let $\trim$ denote the operation that removes all bidirected
loops in a graph. For $\gG^{*} = \trim(\gG)$ where $\gG \in
\ADMGc(V)$, its marginal graph $\tilde{\gG}^{*} =
\marg^*_{\tilde{V}}(\gG^{*})$ is defined as the trimmed graph with
vertex set $\tilde{V} \subseteq V$ and edges
\begin{align*}  % \label{eq:latent-proj-right-simple}
  V_j \rdedge V_k \ingraph{\tildegG^{*}}&\Longleftrightarrow
                           P[V_j \rdpath V_k \mid \tilde{V}
    \ingraph{\gG^{*}}] \neq \emptyset, \\ % \quad
                                  % \text{and} \quad
  V_j \bdedge V_k \ingraph{\tildegG^{*}}&\Longleftrightarrow
                           P[V_j \confarc V_k \mid \tilde{V}
    \ingraph{\gG^{*}}] \neq
                                                     \emptyset. % \label{eq:latent-proj-leftright-simple}
\end{align*}
This definition of marginalization for ADMGs is consistent with the
more general definition earlier because it can be shown that the
following commutative diagram holds:
\begin{center}
\begin{tikzcd}
  \ADMGc(V) \arrow[r, "\trim"] \arrow[d, swap,
  "\marg_{\tilde{V}}"] & \trim(\ADMGc(V)) \arrow[d,
  "\marg^{*}_{\tilde{V}}"] \\
  \ADMGc(\tilde{V}) \arrow[r, mapsto, "\trim"] &
  \trim(\ADMGc(\tilde{V}))
\end{tikzcd}
\quad
\begin{tikzcd}
  \gG \arrow[r, mapsto, "\trim"] \arrow[d, mapsto, swap,
  "\marg_{\tilde{V}}"] & \gG^{*} \arrow[d, mapsto,
  "\marg^{*}_{\tilde{V}}"] \\
  \tilde{\gG} \arrow[r, mapsto, "\trim"] & \tilde{\gG}^{*}
\end{tikzcd}
\end{center}

The next result is an extension of \Cref{prop:marg-preserve-walks-1}
to ADMGs.

\begin{proposition} \label{prop:marg-preserve-walks-3}
  For any $\gG^{*} \in \trim(\ADMGc(V))$ and $L \subseteq
  \tilde{V} \subseteq V$, we have
  \begin{align*}
    \marg^{*}_{\tilde{V}}\left(P \left[\tilde{V}
    \begin{Bmatrix}
      \rdpath \\
      \ldpath \\
      \confarc \\
      \mconnarc
    \end{Bmatrix}
    \tilde{V} \mid L
    \ingraph{\gG^{*}} \right] \right) &= P \left[\tilde{V}
    \begin{Bmatrix}
      \rdpath \\
      \ldpath \\
      \confarc \\
      \mconnarc
    \end{Bmatrix}
    \tilde{V} \mid L
    \ingraph{\marg^{*}_{\tilde{V}}(\gG)} \right]
  \end{align*}
  where the equality hold for each row of the equation.
  % As a consequence, for any $J,K,L \subseteq \tilde{V} \subseteq V$ we
  % have the following equivalences:
  % \[
  %   J
  %   \begin{Bmatrix}
  %     \rdpath \\
  %     \ldpath \\
  %     \confarc \\
  %     \mconnarc
  %   \end{Bmatrix}
  %   K \mid L \ingraph{\marg^{*}_{\tilde{V}}(\gG^{*})}
  %   \Longleftrightarrow J
  %   \begin{Bmatrix}
  %     \rdpath \\
  %     \ldpath \\
  %     \confarc \\
  %     \mconnarc
  %   \end{Bmatrix} K \mid L \ingraph{\gG}.
  % \]
\end{proposition}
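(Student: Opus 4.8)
The plan is to reduce \Cref{prop:marg-preserve-walks-3} to the walk-level statement of \Cref{prop:marg-preserve-walks-1} for the canonical marginalization $\marg_{\tilde{V}}$, using three devices: the commutative square relating $\marg$, $\marg^{*}$ and $\trim$ stated just above; the fact that in an acyclic graph every unblocked directed walk is already a path; and a path-shortening argument to pass from connecting walks to connecting paths in the non-directed rows. Concretely, I would fix the canonical lift $\gG \in \ADMGc(V)$ with $\trim(\gG) = \gG^{*}$ (restore every bidirected loop), so that the commutative square gives $\marg^{*}_{\tilde{V}}(\gG^{*}) = \trim(\marg_{\tilde{V}}(\gG)) = \trim(\tilde{\gG}) = \tilde{\gG}^{*}$; every comparison in $\tilde{\gG}^{*}$ can then be pulled back to one in $\tilde{\gG}$ before trimming.

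For the $\rdpath$ row (and the $\ldpath$ row by transpose) the argument is routine. Since $\gG^{*}$ is acyclic an unblocked directed walk cannot repeat a vertex, so $W[\tilde{V} \rdpath \tilde{V} \mid L \ingraph{\gG^{*}}] = P[\tilde{V} \rdpath \tilde{V} \mid L \ingraph{\gG^{*}}]$, and likewise in $\tilde{\gG}^{*}$, which is acyclic because marginalization preserves acyclicity; moreover directed walks never touch bidirected edges, so they agree in $\gG$ and $\gG^{*}$ and in $\tilde{\gG}$ and $\tilde{\gG}^{*}$. As $\marg$ and $\marg^{*}$ both collapse a directed arc to a single directed edge, which $\trim$ leaves untouched, the chain $P[\rdpath \ingraph{\gG^{*}}] = W[\rdpath \ingraph{\gG}] \mapsto W[\rdpath \ingraph{\tilde{\gG}}] = P[\rdpath \ingraph{\tilde{\gG}^{*}}]$ lets \Cref{prop:marg-preserve-walks-1} transport verbatim to this row.

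The substance is in the $\confarc$ and $\mconnarc$ rows, where walks and paths genuinely diverge. The structural fact I would exploit is that in the canonical $\gG$ every confounding arc of $\gG^{*}$ is a trek: a pure fork $V_j \ldpath A \rdpath V_k$ becomes the trek $V_j \ldpath A \bdedge A \rdpath V_k$ once the loop at its source $A$ is reinserted, while a confounding arc already carrying a genuine bidirected edge is itself a trek (this is the equivalence of \Cref{prop:trek-exist}). For the inclusion $\subseteq$ I would take a path $p$ of type $\confarc$ or $\mconnarc$ in $\gG^{*}$, cut it at its internal $\tilde{V}$-vertices into maximal sub-arcs through $V \setminus \tilde{V}$, read each directed sub-arc as a directed edge and each confounding sub-arc as a (loop-)trek and hence as a bidirected edge of $\tilde{\gG}^{*}$ via \Cref{prop:marg-preserve-walks-1}, and concatenate. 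The result is a path because $p$ visits its $\tilde{V}$-vertices distinctly, it is unblocked by $L$ because every cut vertex was an interior non-endpoint of $p$ and hence outside $L$, and its collider pattern — in particular the endpoint arrowheads needed for the $\confarc$ row — is inherited from $p$.

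The opposite inclusion $\supseteq$ is where I expect the \textbf{main obstacle}. Given a path $\tilde{p}$ of the stated type in $\tilde{\gG}^{*}$, each edge lifts, by the definition of $\marg^{*}$, to some arc through latent vertices in $\gG^{*}$ (a directed path for $\rdedge$ or $\ldedge$, a confounding arc for $\bdedge$), and these lifts concatenate to a collider-free walk in $\gG^{*}$ because $\tilde{p}$ is itself collider-free and the lift preserves the arrowhead or tail at each junction $\tilde{V}$-vertex. The difficulty is that a naive concatenation may reuse latent vertices and fail to be a path. To repair this I would pass to a lift of minimal total length and run a shortening argument: acyclicity forces each ascending and each descending directed segment of the arc to be simple, so any recurrence pairs an ascending visit of a vertex $Z$ with a descending one, at which both surviving edges are tails (the ascending half contributes a tail toward $V_j$, the descending half a tail toward $V_k$); excising the intervening sub-walk then yields a strictly shorter collider-free walk with the same endpoints, the same endpoint arrowheads, and still unblocked by $L$ (only interior non-$L$ vertices are deleted), contradicting minimality. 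The minimal lift is therefore a genuine path in $\gG^{*}$ of the required type mapping onto $\tilde{p}$. Finally, the $\mconnarc$ equality assembles from the directed and $\confarc$ rows along the interior non-collider $\tilde{V}$-vertices, since every m-connected path decomposes uniquely into directed segments and at most one confounding turn. The crux throughout is this walk-to-path shortening for arcs; everything else is bookkeeping layered on \Cref{prop:marg-preserve-walks-1}.
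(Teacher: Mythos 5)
You correctly isolate the crux in the $\supseteq$ direction, but the repair you propose does not close it, and in fact the stated set equality cannot be rescued this way because it is false as an identity of sets of paths. The problem with the minimal-lift argument is that excising the sub-walk between the two occurrences of a repeated latent vertex generally deletes interior $\tilde{V}$-vertices of $\tilde{p}$ (or merges two edges of $\tilde{p}$ into one bidirected edge), so the shortened walk is no longer a lift of $\tilde{p}$ but of a different, shorter path in $\tilde{\gG}^{*}$; minimality among lifts of $\tilde{p}$ is therefore not contradicted. Concretely, take $V=\{V_1,V_2,V_3,Z,U\}$, $\tilde V=\{V_1,V_2,V_3\}$, $L=\emptyset$, and directed edges $V_2\rdedge Z$, $Z\rdedge V_1$, $Z\rdedge V_3$, $U\rdedge V_2$, $U\rdedge Z$. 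Then $V_2\rdedge V_1$ and $V_2\bdedge V_3$ are edges of $\tilde{\gG}^{*}$ (witnessed by $V_2\rdedge Z\rdedge V_1$ and $V_2\ldedge U\rdedge Z\rdedge V_3$), so $\tilde p= V_1\ldedge V_2\bdedge V_3$ lies in $P[V_1\confarc V_3\mid\emptyset\ingraph{\tilde{\gG}^{*}}]$; but its unique lift $V_1\ldedge Z\ldedge V_2\ldedge U\rdedge Z\rdedge V_3$ repeats $Z$, and the only collider-free path from $V_1$ to $V_3$ in $\gG^{*}$ is $V_1\ldedge Z\rdedge V_3$, whose margin is the single edge $V_1\bdedge V_3\neq\tilde p$. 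So for the $\confarc$ and $\mconnarc$ rows the left-hand side is a proper subset of the right-hand side. (The paper gives no proof of this proposition, so there is nothing to compare your route against; your treatment of the directed rows and your $\subseteq$ argument are fine.)

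What your shortening argument does deliver is the entrywise non-emptiness version, which is all that the separation equivalences in \Cref{prop:trek-exist,prop:tsep-simplify} require: $P[V_j \mconnarc V_k\mid L\ingraph{\tilde{\gG}^{*}}]\neq\emptyset$ if and only if $P[V_j \mconnarc V_k\mid L\ingraph{\gG^{*}}]\neq\emptyset$, and likewise for the other rows. For that weaker claim you should drop the requirement that the shortened walk still lift $\tilde p$ and instead minimize over all unblocked collider-free walks from $V_j$ to $V_k$: acyclicity makes each directed leg simple, so any repetition pairs a vertex on the descending leg with one on the ascending leg; the excision keeps the endpoints, keeps collider-freeness (both surviving edges are tails at the cut vertex), keeps unblockedness (only non-endpoints outside $L$ are removed), and, since the repeated vertex cannot be $V_j$ or $V_k$ (each appears at exactly one junction because $\tilde p$ is a path), it keeps the walk non-directed in the $\confarc$ row. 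I would recommend either restating the proposition at the level of non-emptiness or redefining the margin of a $P$-matrix entry as the set of marginal paths admitting \emph{some} path preimage; as written, the equality does not hold.
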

% \begin{proof}
%   % The first two equalities immediately follow from Proposition
%   % \ref{prop:marg-preserve-walks-1} (it is obvious that the
%   % marginalization of a path is still a path) and the last equality
%   % follows from the first three. It remains to prove that
%   % marginalization preserves $\confarc$ in trimmed acyclic graphs.
%   It is obvious that the marginalization of a path remains a path.
% These equalities then follow from the observation that
% $\marg^{*}_{\tilde{V}^{*}}$, as a mapping between walks
% generated by \eqref{eq:latent-proj-right-simple} and
% \eqref{eq:latent-proj-leftright-simple}, preserves endpoint
% arrowheads.
% \end{proof}

\subsection{Graph separation}
\label{sec:graph-separation}

The last important concept we will introduce is graph separation. This
is based the following extension of the concept of blocking from arcs
to walks. We say a walk in $\gG \in \DMG(V)$ is \emph{blocked} by $L
\subseteq V$, if
\begin{enumerate}
\item the walk contains a collider $V_m$ such that $V_m \not \in L$, or
\item the walk contains a non-colliding non-endpoint $V_m$ such that $V_m
  \in L$ (any vertices not at the two ends are called a
  \emph{non-endpoint} of the walk).
\end{enumerate}
This definition is consistent with our earlier definition of blocking
an arc, which only requires the second condition because an arc has no
colliders.

For any $V_j,V_k \in V$, we denote the set of walks from $V_j$ to
$V_k$ in $\gG$ that are not blocked by $L$ as $W[V_j \mconn V_k \mid L
\ingraph{\gG}]$. If this set is empty, we say $V_j$ and $V_k$ are
\emph{m-separated} given $L$ in $\gG$ and write $\textnot V_j \mconn V_k \mid
L \ingraph{\gG}$. This definition naturally extends to sets of
variables using our matrix notation. For example, for $J,K \subseteq
V$ we write
\begin{align*}
  \textnot J \mconn K \mid L \ingraph{\gG} &\Longleftrightarrow W[J \mconn K \mid
                                   L \ingraph{\gG}] = \emptyset \\
                                 &\Longleftrightarrow \textnot V_j \mconn V_k \mid L
                                   \ingraph{\gG},~\text{for
                                   all}~V_j \in J, V_k \in K.
\end{align*}
As before, we often omit the graph $\gG$ when it can be inferred from
the context.

% Recall that an arc, as introduced in \Cref{sec:treks-arcs}, is a walk
% with no colliders and can be further categorized by the number of
% bidirected edges and existence of endpoint arrowheads.
\begin{remark}
Any walk in $\gG$
can always be uniquely written as the concatenation of one or multiple
arcs, where any two consecutive arcs meet at a collider. % Another way to put this is that any walk in a
% directed mixed graph can have $0$ or more colliders, and between two
% consecutive colliders (or the endpoints of the walk, if we are
% considering the first or last collider on the walk) there can be no
% colliders.
In other words, any walk from $j$ to $k$ has the form $j \mconn k$,
where a wild card character $\ast$ is used in this notation to
emphasize that the walk may have $0$ or more colliders, and half
arrowheads are used to indicate that there may or may not exist
arrowheads at the two ends. So using our matrix notation, m-separation
can be formally defined by vanishing entries in
\begin{equation}
  \label{eq:mconn}
  % \begin{split}
    % &
    W[V \mconn V \mid L]
    =W[V \mconnarc V \mid L] + W[V \halfsquigfull
    L \mid L] \cdot \Big\{ \Id + \sum_{q=1}^{\infty} (W[L \confarc L
    \mid L])^q \Big\} \cdot W[L \fullsquighalf V \mid L].
  % \end{split}
  \end{equation}
\end{remark}

\begin{remark}
We shall distinguish between
\[
  V_j \mconn V_k \quad \text{and} \quad V_j \mconn V_k \mid \emptyset,
\]
where the first means that there exists a walk from $V_j$ to $V_k$ in
$\gG$, and the second means that there exists such a walk that is not
blocked by the emptyset (so it must be an arc). In other words, $W[V
\mconn V]$ contains all walks in $\gG$ and $W[V \mconn V \mid
\emptyset] = W[V \mconnarc V]$.
\end{remark}

We introduce two useful variants of m-separation. Let $W[V_j \tconn V_k
\mid L]$ denote the set of \emph{t-connected} walks from $V_j$ to $V_k$
that is unblocked by $L$ and consists of one or several treks. In our
matrix notation, this is the $(j,k)$-entry of
\begin{equation}
  \label{eq:tconn}
  % \begin{split}
  %   &
    W[V \tconn V \mid L]
    =W[V \trek V \mid L] % \\
    % & \qquad
    + W[V \trek
    L \mid L] \cdot \Big\{ \Id + \sum_{q=1}^{\infty} (W[L \trek L
    \mid L])^q \Big\} \cdot W[L \trek V \mid L].
  % \end{split}
\end{equation}
% \begin{equation}
%   \label{eq:tconn-walk-given-L}
%   &W[V \tconn V \mid L] \\
%   =& (\Id + W[V \trek L \mid L]) \cdot \Big\{\Id + \sum_{q=1}^{\infty} (W[V
%   \bdedge V])^q\Big\}
%   \cdot (\Id + W[L \trek V \mid L]) \setminus \Id,
% \end{equation}
If this set is empty in $\gG$, we say $V_j$ and $V_k$ are \emph{t-separated}
given $L$ and write $\textnot V_j \tconn V_k \mid L
\ingraph{\gG}$. Similarly, let $W[V_j \dconn V_k \mid L
\ingraph{\gG}]$ denote the set of walks from $V_j$
to $V_k$ consisting of directed edges only that is not blocked by
$L$. If this set is empty, we say $V_j$ and $V_k$ are \emph{d-separated}
in $\gG$ given $L$ and write $\textnot V_j \dconn V_k \mid L
\ingraph{\gG}$. Similar to t/m-separation, this definition naturally
extends to sets of vertices.

The next result follows from applying
\Cref{prop:marg-preserve-walks-1} to \eqref{eq:tconn}.

\begin{proposition}[Marginalization preserves t-connection] \label{prop:marg-preserve-walks-2}
  For any $\gG \in \DMG(V)$ and $J, K,L \subseteq \tilde{V} \subseteq
  V$, we have
  \[
    J \tconn K \mid L \ingraph{\marg_{\tilde{V}}(\gG)}
    \Longleftrightarrow J \tconn K \mid L \ingraph{\gG}.
  \]
\end{proposition}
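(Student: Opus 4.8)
The plan is to read the result off the definition \eqref{eq:tconn} of t-connection, which expresses the matrix of unblocked t-connected walks purely in terms of the trek matrices $W[\,\cdot\,\trek\,\cdot\mid L\,]$ combined by the dioid operations. Instantiating \eqref{eq:tconn} in the marginal graph over the vertex set $\tilde{V}$, and using $L\subseteq\tilde{V}$, gives
\[ W[\tilde{V}\tconn\tilde{V}\mid L\ingraph{\marg_{\tilde{V}}(\gG)}] = W[\tilde{V}\trek\tilde{V}\mid L] + W[\tilde{V}\trek L\mid L]\cdot\Big\{\Id+\sum_{q=1}^{\infty}(W[L\trek L\mid L])^{q}\Big\}\cdot W[L\trek\tilde{V}\mid L], \]
where every trek matrix on the right-hand side is taken in $\marg_{\tilde{V}}(\gG)$. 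Because $L\subseteq\tilde{V}$, all row, column and summation indices occurring here lie in $\tilde{V}$, so the expression is a genuine restriction of \eqref{eq:tconn} to the $\tilde{V}\times\tilde{V}$ block.

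First I would rewrite each trek matrix in $\marg_{\tilde{V}}(\gG)$ as the margin of the corresponding trek matrix in $\gG$, which is exactly \Cref{prop:marg-preserve-walks-1}: for index sets drawn from $\{\tilde{V},L\}$ (obtained by restricting the $\tilde{V}\times\tilde{V}$ block identity of \Cref{prop:marg-preserve-walks-1} to the appropriate rows and columns, all of which lie in $\tilde{V}\supseteq L$) one has $W[\,\cdot\,\trek\,\cdot\mid L\ingraph{\marg_{\tilde{V}}(\gG)}]=\marg_{\tilde{V}}(W[\,\cdot\,\trek\,\cdot\mid L\ingraph{\gG}])$. Substituting these identities reduces the claim to pulling $\marg_{\tilde{V}}$ outside the outer dioid operations, i.e. to the matrix identity
\[ W[\tilde{V}\tconn\tilde{V}\mid L\ingraph{\marg_{\tilde{V}}(\gG)}] = \marg_{\tilde{V}}\big(W[\tilde{V}\tconn\tilde{V}\mid L\ingraph{\gG}]\big). \]
The stated equivalence then follows by restricting to the $(J,K)$ block and noting that marginalization preserves its (non)emptiness: the margin of an empty set is empty, while conversely any t-connected walk of $\gG$ with endpoints in $\tilde{V}$ has a nonempty margin, being a concatenation of treks each of which projects to a nonempty trek of $\marg_{\tilde{V}}(\gG)$ by \Cref{prop:marg-preserve-walks-1}.

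The step requiring genuine care, and the main obstacle, is justifying that $\marg_{\tilde{V}}$ distributes over the concatenations and the infinite power sum in \eqref{eq:tconn}. Distribution over $+$, including the countable union defining $\sum_{q=1}^{\infty}$, is immediate, since marginalization acts entrywise on sets of walks. Distribution over $\cdot$ is where the hypothesis $L\subseteq\tilde{V}$ is indispensable: consecutive trek factors in \eqref{eq:tconn} are concatenated at vertices of $L$, which are precisely the colliders of the resulting t-connected walk, and since $L\subseteq\tilde{V}$ every such meeting vertex is retained in $\marg_{\tilde{V}}(\gG)$. I would verify the distributivity at the level of a single t-connected walk $t_{1}\cdots t_{n}$ with collider waypoints $m_{1},\dots,m_{n-1}\in L\subseteq\tilde{V}$: its decomposition into arcs separated by colliders breaks exactly at the $m_{i}$, so by the rule that a marginal walk is the concatenation of the margins of its arc-components one obtains $\marg_{\tilde{V}}(t_{1}\cdots t_{n})=\marg_{\tilde{V}}(t_{1})\cdots\marg_{\tilde{V}}(t_{n})$. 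Summing over all t-connected walks then yields the matrix identity, and with it the proposition. Had $L$ been allowed to meet $V\setminus\tilde{V}$, a collider waypoint could disappear under marginalization and this factorization would fail, which is exactly why the hypothesis restricts $L$ to $\tilde{V}$.
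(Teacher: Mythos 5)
Your proposal is correct and follows exactly the route the paper intends: the paper gives no separate proof, stating only that the result ``follows from applying \Cref{prop:marg-preserve-walks-1} to \eqref{eq:tconn}'', which is precisely your argument. Your added justification that $\marg_{\tilde{V}}$ distributes over the concatenations because the trek factors meet at colliders in $L \subseteq \tilde{V}$ is a correct filling-in of the detail the paper leaves implicit.
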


An experienced reader will find the above definition of blocking
slightly different from many other authors who define graph separation
using paths (walks without duplicated vertices)
\parencite{lauritzenGraphicalModels1996,pearlCausalityModelsReasoning2000}. Specifically,
we say a walk in $\gG$ is \emph{ancestrally blocked} by $L$ if
\begin{enumerate}
\item the walk contains a collider $V_m$ such that $V_m \not \in L$ and $m
  \nordpath L \ingraph{\gG}$, or
\item the walk contains a non-colliding non-endpoint $V_m$ such that $m
  \in L$.
\end{enumerate}
It is obvious that an ancestrally blocked walk is also blocked in our
sense, but these two concepts are different. For example, in the
following graph
\[
  \begin{tikzcd}
    V_1 \arrow[r] & V_3 \arrow[d] & V_2 \arrow[l] \\
    & V_4 &
  \end{tikzcd}
\]
the path $V_1 \rdedge V_3 \ldedge V_2$ is blocked by $V_4$
but not ancestrally blocked by $V_4$. Nevertheless, in
our definition we still have $V_1 \mconn V_2 \mid V_4$ because the walk $V_1
\rdedge V_3 \rdedge V_4 \ldedge V_3 \ldedge V_2$ is not blocked by
$V_4$. Proposition \ref{prop:tsep-simplify} below shows that these two
notions of block define the same notion of m-separation for canonical
graphs. The advantage of our weaker notion of blocking is that it is
entirely a property of the walk and the set of vertices being
conditioned on. In contrast, ancestral blocking depends on the ambient
graph.

The ``$P$ matrices'' definition in \eqref{eq:directed-path}
and \eqref{eq:trekpath-given-L} naturally extend to walks with
colliders. For example,
\begin{align}
  P[V \mconn V \mid_a L] &= W[V \mconn V \mid_a L] \cap
\mathcal{P}_{\gG}, \label{eq:mconnpath-given-L} \\
  P[V \dconn V \mid_a L] &= W[V \dconn V \mid_a L] \cap
\mathcal{P}_{\gG}, \label{eq:dconnpath-given-L}
\end{align}
where intersection is applied entry-wise and $W[V \mconn V \mid_a L]$
is the matrix of set of walks that are not ancestrally blocked by
$L$.

In the literature, m-separation and d-separation are usually
defined, respectively, as the negation of (iii) and (v) in the next
Proposition.

\begin{proposition} \label{prop:tsep-simplify}
  Consider $\gG \in \DMG(V)$ and disjoint $\{V_j\}, \{V_k\}, L \subset
  V$. If $\gG$ is canonical, that is, $\gG \in \DMGc(V)$, we have
  \[
    \textnormal{(i)}~V_j \tconn V_k \mid L \Longleftrightarrow
    \textnormal{(ii)}~V_j \mconn V_k \mid L \Longleftrightarrow
    \textnormal{(iii)}~P[j \mconn k \mid_a L] \neq \emptyset.
  \]
  Furthermore, if $\gG$ is canonically directed, that is, $\gG \in
  \DiG(V)$, we have
  \[
    % \textnormal{(i)}\Longleftrightarrow\textnormal{(ii)}\Longleftrightarrow\textnormal{(iii)}\Longleftrightarrow
    \textnormal{(i), (ii), (iii)} \Longleftrightarrow
    \textnormal{(iv)}~V_j \dconn V_k \mid L \Longleftrightarrow
    \textnormal{(v)}~P[V_j \dconn V_k \mid_a L] \neq \emptyset.
  \]
\end{proposition}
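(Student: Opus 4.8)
The plan is to prove the two chains of equivalences by manipulating the arc decomposition of a walk, using two ``surgery'' moves that preserve being unblocked: (a) inserting or deleting a bidirected loop $V_s \bdedge V_s$ at a source of an arc, and (b) inserting or deleting a directed detour $V_m \rdpath V_\ell \ldpath V_m$ down to $L$ and back at a collider. Canonicality ($\gG \in \DMGc(V)$) guarantees that the loops needed for move (a) are present, so both moves keep the walk inside $\gG$. Throughout I use the fact, from \Cref{prop:arc-collider}, that an arc is collider-free and is either a trek (one bidirected edge) or d-connected (none). For (i)$\Leftrightarrow$(ii), the implication (i)$\Rightarrow$(ii) is immediate: in a t-connected walk of the form \eqref{eq:tconn} the constituent treks are collider-free and the only colliders are the junctions, which lie in $L$ and hence do not block. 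For (ii)$\Rightarrow$(i), I decompose an unblocked walk into arcs meeting at colliders in $L$; each arc is either already a trek or is a d-connected arc $\ldpath V_s \rdpath$, to which I apply move (a) at its source. Inserting the loop splits $V_s$ into two non-colliders, both still outside $L$, so the arc becomes a trek without changing the blocking status anywhere, and the resulting walk is exactly t-connected in the sense of \eqref{eq:tconn}.

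For (ii)$\Leftrightarrow$(iii), I first note that, since the collider clause of ancestral blocking strengthens ours by the extra requirement $m \nordpath L$, ancestral blocking is strictly harder to achieve, so every walk unblocked in our sense is a fortiori not ancestrally blocked. The direction (iii)$\Rightarrow$(ii) is then constructive: given a path that is not ancestrally blocked, I apply move (b) at each collider $V_m \notin L$ (which by hypothesis satisfies $m \rdpath L$), routing the shortest directed detour to the first vertex $V_\ell \in L$ it reaches. This turns $V_m$ into two non-colliders outside $L$, contributes only the open collider $V_\ell \in L$, and adds non-endpoints outside $L$, so the resulting walk is unblocked in our sense. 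The converse (ii)$\Rightarrow$(iii) is the walk-to-path reduction: I would take a walk that is unblocked in our sense (equivalently, not ancestrally blocked) of minimal length and argue it is already a path, excising the sub-walk between any repeated vertex and analysing the arrowheads at the splice to show the excision can always be performed while staying not ancestrally blocked.

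For the canonically directed case, $\gG \in \DiG(V)$ has no bidirected edges other than loops, so by \Cref{prop:arc-collider} every trek is a d-connected arc carrying a loop at its source; deleting all such loops via move (a) sends a t-connected walk to a walk of directed edges with the same junction colliders, that is, a d-connected walk, and inserting them reverses the map, giving (i)$\Leftrightarrow$(iv). The equivalence (iv)$\Leftrightarrow$(v) is the directed-edge specialization of the (ii)$\Leftrightarrow$(iii) argument: move (b) uses only directed edges, and the walk-to-path reduction is applied to a walk of directed edges. The main obstacle I anticipate is precisely this walk-to-path step in (ii)$\Rightarrow$(iii) and (iv)$\Rightarrow$(v): unlike the forward splicing, excising a cycle can flip a non-collider at the splice vertex into a collider (or vice versa) and thereby threaten the blocking status, so the delicate part is to exhibit a reduction order, or a minimality argument, under which the extracted path provably remains not ancestrally blocked.
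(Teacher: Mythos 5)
The paper does not actually print a proof of \Cref{prop:tsep-simplify} (it is one of the results for which the author only sketches or omits arguments), so there is nothing to compare against line by line; I will assess your argument on its own terms.

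Your two surgeries are the right tools and most of the proof is sound. The loop insertion for (ii)$\Rightarrow$(i) is correct: canonicality supplies $V_s \bdedge V_s$, the source of a d-connected arc is either a non-colliding non-endpoint (hence outside $L$) or an endpoint of the whole walk (outside $L$ by the assumed disjointness of $\{V_j\},\{V_k\},L$ --- you should say this explicitly), and both new occurrences of $V_s$ carry exactly one arrowhead, so the blocking status is unchanged. The detour insertion for (iii)$\Rightarrow$(ii) is also correct, provided (as you say) the detour is truncated at its first entry into $L$, and the reduction of (i)$\Leftrightarrow$(iv) to loop deletion/insertion in $\DiG(V)$ is fine.

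The genuine gap is the one you name yourself: the walk-to-path reduction in (ii)$\Rightarrow$(iii) and (iv)$\Rightarrow$(v). This is not a routine minimality argument, and as written your proof does not contain it. The failure mode is concrete: suppose a vertex $V_m$ occurs twice on an unblocked walk, the first time as $\rdedge V_m \rdedge$ and the second time as $\ldedge V_m \ldedge$ (both non-colliders, so $V_m \notin L$). Excising the segment between the two occurrences leaves $\rdedge V_m \ldedge$, i.e.\ $V_m$ becomes a collider not in $L$; to keep the excised walk not ancestrally blocked you must certify $V_m \rdpath L$. Following the outgoing directed edge along the discarded segment only gets you to the next collider of the original walk (which lies in $\overline{\an}(L)$, good) \emph{or} to an endpoint $V_j$/$V_k$ of the walk, in which case you obtain $V_m \rdpath V_k$ rather than $V_m \rdpath L$ and the argument stalls. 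So ``take a shortest unblocked walk and excise'' does not close by itself; one needs either a different induction quantity (e.g.\ first minimize the number of colliders, handling the endpoint case by rerouting the walk to start at $V_m$'s directed descendant among $\{V_j,V_k\}$), or the standard detour through the ancestral margin and the augmented graph --- which is exactly the machinery of \Cref{cor:marg-preserve-m-connection} and \Cref{prop:mconn-uconn} in this paper, and of Theorem~1 of \textcite{richardson03_markov_proper_acycl_direc_mixed_graph}. Until that step is supplied, (ii)$\Rightarrow$(iii) and (iv)$\Rightarrow$(v) remain unproved; everything else in your proposal I would accept.
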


By \Cref{prop:marg-preserve-walks-2,prop:tsep-simplify}, m-connection
(d-connection) is also preserved by marginalization when the graph is
canonical (canonically directed).

\begin{proposition}[Marginalization preserves m- and
  d-connections] \label{cor:marg-preserve-m-connection}
  Consider $\gG \in \DMG(V)$ and $J, K,L \subseteq \tilde{V}
  \subseteq V$. If $\gG \in \DMGc(V)$, then
  \[
    J \mconn K \mid L \ingraph{\marg_{\tilde{V}}(\gG)}
    \Longleftrightarrow J \mconn K \mid L \ingraph{\gG}.
  \]
  Furthermore, if $\gG \in \DiG(V)$, then
  \[
    J \dconn K \mid L \ingraph{\marg_{\tilde{V}}(\gG)}
    \Longleftrightarrow J \dconn K \mid L \ingraph{\gG}.
  \]
\end{proposition}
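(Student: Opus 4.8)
The plan is to route both biconditionals through t-connection, which is the one notion already known to be invariant: \Cref{prop:marg-preserve-walks-2} gives $J \tconn K \mid L \ingraph{\marg_{\tilde{V}}(\gG)} \Leftrightarrow J \tconn K \mid L \ingraph{\gG}$ for all $J,K,L \subseteq \tilde{V}$, with no restriction on the graph class. The remaining work is to convert m-connection (resp.\ d-connection) into t-connection \emph{separately} inside the source graph $\gG$ and inside the target graph $\marg_{\tilde{V}}(\gG)$, using \Cref{prop:tsep-simplify}. Since that proposition is phrased for disjoint singletons, I would first reduce the set statements to pairs via the entrywise definition ($J \mconn K \mid L$ holds iff $V_j \mconn V_k \mid L$ for some $V_j \in J$, $V_k \in K$, and likewise for $\dconn$), disposing of the degenerate cases where a chosen vertex coincides with an endpoint or lies in $L$ directly from the definitions.

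For the m-connection part, assume $\gG \in \DMGc(V)$. The first step is to verify that marginalization stays in the canonical class, i.e.\ $\marg_{\tilde{V}}(\gG) \in \DMGc(\tilde{V})$: for each $V_j \in \tilde{V}$ the bidirected loop $V_j \bdedge V_j$ of $\gG$ is itself a trek from $V_j$ to $V_j$ whose only vertex is an endpoint, hence it is unblocked by $\tilde{V}$, so $V_j \trek V_j \mid \tilde{V} \ingraph{\gG}$ and the loop is forced into $\marg_{\tilde{V}}(\gG)$. With both graphs canonical, \Cref{prop:tsep-simplify} applies on each side and I would close the argument with the chain
\[
\begin{aligned}
V_j \mconn V_k \mid L \ingraph{\gG}
&\Leftrightarrow V_j \tconn V_k \mid L \ingraph{\gG} \\
&\Leftrightarrow V_j \tconn V_k \mid L \ingraph{\marg_{\tilde{V}}(\gG)} \\
&\Leftrightarrow V_j \mconn V_k \mid L \ingraph{\marg_{\tilde{V}}(\gG)},
\end{aligned}
\]
before lifting back to sets.

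For the d-connection part, assume $\gG \in \DiG(V)$; the intended chain is identical, using instead the second half of \Cref{prop:tsep-simplify} (the equivalence $V_j \dconn V_k \mid L \Leftrightarrow V_j \tconn V_k \mid L$ valid in canonically directed graphs). The first and middle equivalences are unproblematic, but the final step re-applies the $\DiG$-half of \Cref{prop:tsep-simplify} \emph{inside the target graph}, and therefore demands $\marg_{\tilde{V}}(\gG) \in \DiG(\tilde{V})$. This is the main obstacle, and it is genuinely more delicate than its m-connection analogue: marginalization preserves canonicity as above, but it need \emph{not} preserve the ``no non-loop bidirected edge'' property, since a trek $V_j \ldpath V_m \bdedge V_m \rdpath V_k$ built from a bidirected loop at a common ancestor $V_m \notin \tilde{V}$, with both directed legs avoiding $\tilde{V}$, is unblocked by $\tilde{V}$ and inserts a genuine bidirected edge $V_j \bdedge V_k$ into $\marg_{\tilde{V}}(\gG)$. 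I would therefore need to control exactly when such treks survive. The clean sufficient condition is that $\tilde{V}$ be ancestrally closed in $\gG$: then any common ancestor $V_m$ of retained $V_j, V_k$ already lies in $\tilde{V}$, so the generating trek is blocked at the non-endpoint $V_m$, no non-loop bidirected edge is created, and $\marg_{\tilde{V}}(\gG) \in \DiG(\tilde{V})$ as required. To keep the statement fully general one could instead replace the final appeal to \Cref{prop:tsep-simplify} in the target by a direct walk-level argument---showing that a directed-edges-only unblocked walk exists in $\marg_{\tilde{V}}(\gG)$ precisely when the corresponding d-connection holds in $\gG$---but establishing that equivalence without the $\DiG$ hypothesis on the target is where the real effort lies.
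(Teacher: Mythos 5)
Your route is exactly the paper's: the text proves this proposition in one sentence, ``By \Cref{prop:marg-preserve-walks-2,prop:tsep-simplify}, m-connection (d-connection) is also preserved by marginalization when the graph is canonical (canonically directed)'' --- i.e.\ convert to t-connection on each side via \Cref{prop:tsep-simplify} and transport t-connection with \Cref{prop:marg-preserve-walks-2}. For the m-connection half your write-up is complete and, if anything, more careful than the paper's: the reduction to singletons and the check that $\marg_{\tilde{V}}(\gG) \in \DMGc(\tilde{V})$ (the loop $V_j \bdedge V_j$ is an unblocked trek, hence survives) are steps the paper leaves implicit.

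The obstacle you flag in the d-connection half is genuine, and the paper does not address it: its one-line argument silently applies the $\DiG$ clause of \Cref{prop:tsep-simplify} to $\marg_{\tilde{V}}(\gG)$, which need not lie in $\DiG(\tilde{V})$. Your own example is already a counterexample to the literal statement. Take $V = \{V_1,V_2,V_3\}$ with edges $V_1 \ldedge V_3 \rdedge V_2$ and all bidirected loops, so $\gG \in \DiG(V)$, and let $\tilde{V} = \{V_1,V_2\}$, $L = \emptyset$. The trek $V_1 \ldedge V_3 \bdedge V_3 \rdedge V_2$ is unblocked by $\tilde{V}$, so $\marg_{\tilde{V}}(\gG)$ contains $V_1 \bdedge V_2$ and no directed edges; hence $\textnot V_1 \dconn V_2 \ingraph{\marg_{\tilde{V}}(\gG)}$ while $V_1 \dconn V_2 \ingraph{\gG}$ via the walk $V_1 \ldedge V_3 \rdedge V_2$. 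So the second displayed equivalence fails as stated, and one of the repairs you propose is needed: either require $\tilde{V}$ to be ancestral (which forces $\marg_{\tilde{V}}(\gG) \in \DiG(\tilde{V})$, after which your chain closes), or replace $\dconn$ in the marginal graph by $\mconn$ (equivalently $\tconn$), which is what \Cref{prop:marg-preserve-walks-2} actually delivers. You should not regard your inability to close this step as a deficiency of your argument; it is a gap in the proposition itself.
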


When $J$ and $K$ are singletons, m-connection has a particularly
simple form in the smallest possible marginal graph. Let $W[V
\colliderconn V \ingraph{\gG}]$ denote all walks whose non-endpoints
are all colliders. In other words,
\begin{equation}
  \label{eq:collider-conn}
  W[V \colliderconn V]
  = (\Id + W[V \rdedge V]) \cdot \Big\{\Id + \sum_{q=1}^{\infty} (W[V
     \bdedge V])^q\Big\}
     \cdot (\Id + W[V \ldedge V]) \setminus \Id,
\end{equation}
where the set difference with the identity matrix is applied
entry-wise. This is closely related to the notion of ``Markov
blanket'' in \textcite{richardsonNestedMarkovProperties2023}. The next result
follows from \Cref{prop:marg-preserve-walks-1} and
\Cref{prop:marg-preserve-walks-2}.

\begin{proposition} \label{cor:mconn-colliderconn}
  Consider $\gG \in \DMGc(V)$ and any disjoint $\{V_j\},\{V_k\},L \subset
  V$. Let $\tilde{V} = \{V_j, V_k\} \cup L$, then
  \[
    V_j \mconn V_k \mid L \ingraph{\gG} \Longleftrightarrow V_j \colliderconn
    V_k \ingraph{\marg_{\tilde{V}}(\gG)}.
  \]
\end{proposition}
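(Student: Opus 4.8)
The plan is to factor the claimed equivalence through the marginal graph $\tilde{\gG} = \marg_{\tilde{V}}(\gG)$, so that the only substantive work takes place on the small graph whose vertex set is $\tilde{V} = \{V_j, V_k\} \cup L$. There are two steps: first, transport the relation $V_j \mconn V_k \mid L$ from $\gG$ to $\tilde{\gG}$ using that marginalization preserves m-connection; second, show that inside $\tilde{\gG}$, where $L = \tilde{V} \setminus \{V_j, V_k\}$ is literally every vertex other than the two endpoints, an unblocked walk and a collider-connected walk coincide.

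For the first step, since $\gG \in \DMGc(V)$ is canonical and $\{V_j\}, \{V_k\}, L \subseteq \tilde{V}$, I would apply \Cref{cor:marg-preserve-m-connection}, the packaged consequence of the marginalization-preservation results \Cref{prop:marg-preserve-walks-1,prop:marg-preserve-walks-2}, to obtain
\[
  V_j \mconn V_k \mid L \ingraph{\gG} \Longleftrightarrow V_j \mconn V_k \mid L \ingraph{\tilde{\gG}}.
\]
This leaves the purely combinatorial claim
\[
  V_j \mconn V_k \mid L \ingraph{\tilde{\gG}} \Longleftrightarrow V_j \colliderconn V_k \ingraph{\tilde{\gG}}, \qquad L = \tilde{V} \setminus \{V_j, V_k\},
\]
which no longer mentions $\gG$ and can be read off from the definition of blocking together with the expansion of $W[V \colliderconn V]$ in \eqref{eq:collider-conn}.

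For this combinatorial equivalence I would argue through the two blocking conditions. Every non-endpoint of a walk from $V_j$ to $V_k$ in $\tilde{\gG}$ lies in $\tilde{V} = L \cup \{V_j, V_k\}$. If the walk is unblocked by $L$, the second blocking condition rules out any non-colliding non-endpoint in $L$ and the first rules out any collider outside $L$; hence every non-endpoint lying in $L$ is forced to be a collider, while every non-endpoint equal to $V_j$ or $V_k$ is forced to be a non-collider. Passing to the sub-walk from the last occurrence of $V_j$ to the first subsequent occurrence of $V_k$ deletes all internal occurrences of the two endpoints, so its surviving non-endpoints all lie in $L$ and are all colliders; this sub-walk therefore belongs to $W[V \colliderconn V]$, giving $V_j \colliderconn V_k$. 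The converse is the mirror image: a collider-connected walk has all non-endpoints colliders, and after the same truncation (which excises any repeat of $V_j$ or $V_k$, each of which would be a collider outside $L$ and hence a blocker) every non-endpoint lies in $L$ and is a collider, so neither blocking condition fires and the walk witnesses $V_j \mconn V_k \mid L$.

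The step I expect to require the most care is this truncation. I must confirm that cutting a walk at a repeated endpoint leaves the collider/non-collider status of every surviving internal vertex unchanged --- which holds because that status depends only on the two edges of the walk incident to the vertex --- and that the result is still a nonempty walk from $V_j$ to $V_k$, which is guaranteed since $V_j \neq V_k$ forces positive length. Everything else is bookkeeping: once the first step places us inside $\tilde{\gG}$, conditioning on $L = \tilde{V} \setminus \{V_j, V_k\}$ is conditioning on every internal vertex a walk could visit, and the identification of ``unblocked'' with ``all non-endpoints are colliders'' is then immediate from the definitions.
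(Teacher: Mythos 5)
Your proposal is correct and follows essentially the route the paper intends: the paper derives this proposition in one line from its marginalization-preservation results (\Cref{prop:marg-preserve-walks-1,prop:marg-preserve-walks-2}, packaged as \Cref{cor:marg-preserve-m-connection}), leaving the remaining combinatorial step implicit. Your careful treatment of that step --- identifying unblocked walks with collider-connected walks in $\marg_{\tilde{V}}(\gG)$ via truncation at repeated occurrences of the endpoints --- is exactly the missing detail and is handled correctly.
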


Finally, define
\begin{equation}
  \label{eq:samedist}
  W[V \samedist V] = \sum_{q=1}^{\infty} (W[V \bdedge V])^q.
\end{equation}
This is closely related to the notion of ``districts''
in \textcite{richardsonNestedMarkovProperties2023} or ``c-components''
(for confounded component) in
\textcite{tianTestableImplicationsCausal2002b}, which are maximal
sets of vertices connected by bidirected edges.

\subsection{Examples}
\label{sec:examples-1}

Consider the graph $\gG$ in \Cref{fig:admg-example-1-full}. The matrix
of directed walks in this graph is given by
  \[
    W[V \rdpath V ]
    =
    \begin{pmatrix}
      \emptyset & \emptyset & \{V_1 \rdedge V_3\} & \{V_1
      \rdedge V_3 \rdedge V_4\} & \{V_1
      \rdedge V_3 \rdedge V_5\} \\
      \emptyset & \emptyset & \{V_2 \rdedge V_3\} & \{V_2 \rdedge V_3
      \rdedge V_4\} &
      \{V_2 \rdedge V_5,~V_2 \rdedge V_3 \rdedge V_5\} \\
      \emptyset & \emptyset & \emptyset & \{V_3 \rdedge V_4\} &
      \{V_3 \rdedge V_5\} \\
      \emptyset & \emptyset & \emptyset & \emptyset &
      \{V_4 \rdedge V_5\} \\
      \emptyset & \emptyset & \emptyset & \emptyset & \emptyset
    \end{pmatrix}.
  \]

Some treks in this
graph (entries in $W[V \trek V \ingraph{\gG}]$) include
  \begin{align*}
    % W[1 \trek 2] &= \emptyset,\\
    W[V_1 \trek V_4 \ingraph{\gG}] &= \{V_1 \bdedge V_4,~ V_1 \bdedge V_1
                   \rdedge V_3 \rdedge V_4 \}, \\
    W[V_3 \trek V_3 \ingraph{\gG}] &= \{V_3 \bdedge V_3,~ V_3 \ldedge V_1
                   \bdedge V_1 \rdedge V_3,~ V_3 \ldedge V_2
                   \bdedge V_2 \rdedge V_3\}, \\
    W[V_3 \trek V_4 \ingraph{\gG}] &= \{V_3 \bdedge V_3 \rdedge V_4,~ V_3 \ldedge V_1
                   \bdedge V_4, \\
                   &\quad ~~~ V_3 \ldedge V_1 \bdedge V_1 \rdedge V_3
                     \rdedge V_4,~ V_3 \ldedge V_2 \bdedge V_2 \rdedge V_3
                     \rdedge V_4\}.
  \end{align*}
  The first and third set of treks have different endpoints and can be
  trimmed to the following m-connected paths in
  \Cref{fig:admg-example-1-simple}:
  \begin{align*}
    P[V_1 \mconnarc V_4 \ingraph{\trim(\gG)}] &= \{V_1 \bdedge V_4,~ V_1 \rdedge V_3
    \rdedge V_4 \}, \\
    P[V_3 \mconnarc V_4 \ingraph{\trim(\gG)}] &= \{V_3 \rdedge V_4,~ V_3 \ldedge V_1
    \bdedge V_4\}.
  \end{align*}

When obtaining the marginal of $\gG$ on $\tilde{V} = \{V_1,V_2,V_4,V_5\}$,
four new edges are introduced: $V_1 \rdedge V_4$ (from $V_1 \rdedge V_3
\rdedge V_4$), $V_1 \rdedge V_5$ (from $V_1 \rdedge V_3 \rdedge V_5$), $V_2 \rdedge
V_4$ (from $V_2 \rdedge V_3 \rdedge V_4$), and $V_4 \bdedge V_5$ (from the trek $V_4
\ldedge V_3 \bdedge V_3 \rdedge V_5$ or the d-connected path $V_4 \ldedge V_3
\rdedge V_5$). The resulting graph is shown in
\Cref{fig:admg-example-proj-1245} (after trimming) and more examples
are given in \Cref{fig:admg-example-proj}.

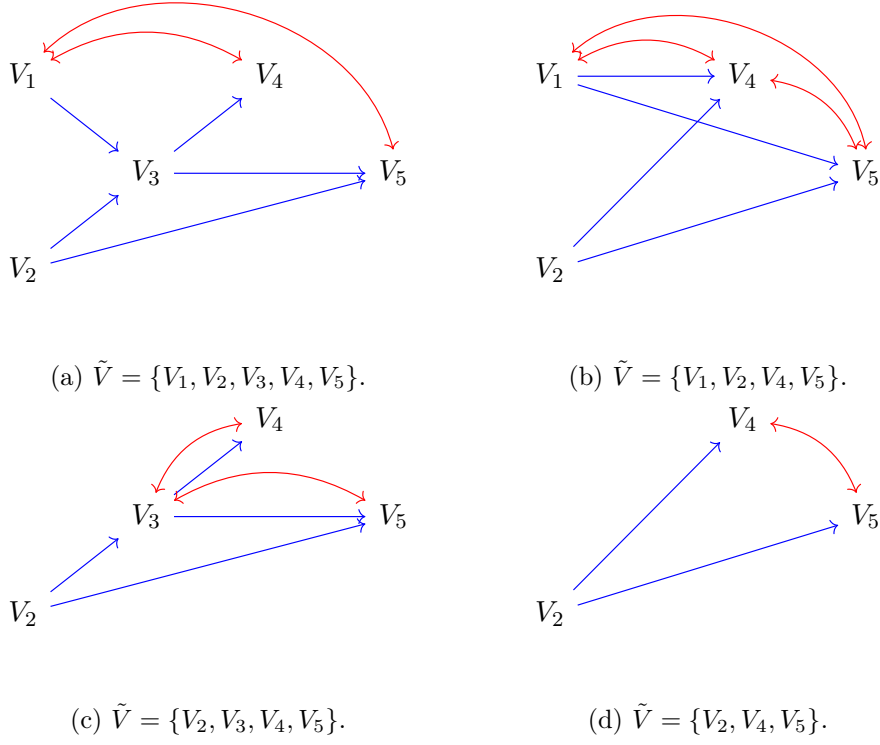
\begin{figure}[t] \centering
  \begin{subfigure}[b]{0.35\textwidth} \centering
    \begin{tikzcd}
      V_1 \arrow[rd, blue] \arrow[rr, red, leftrightarrow, bend left
      = 30] \arrow[rrrd, red, leftrightarrow, bend left
      = 60, end anchor=north] & & V_4 % \arrow[rd, red, leftrightarrow,
      % bend left = 30]
      &
      \\
      & V_3 \arrow[ru, blue] \arrow[rr, blue] & & V_5 \\
      V_2 \arrow[ru, blue] \arrow[rrru, blue] & & & \\
    \end{tikzcd}
    \caption{$\tilde{V} = \{V_1,V_2,V_3,V_4,V_5\}$.}
    \label{fig:admg-example-proj-12345}
  \end{subfigure}
  \quad
  \begin{subfigure}[b]{0.35\textwidth} \centering
    \begin{tikzcd}
      V_1 \arrow[rr, blue] \arrow[rrrd, blue] \arrow[rr, red, leftrightarrow, bend left
      = 30] \arrow[rrrd, red, leftrightarrow, bend left
      = 60, end anchor=north]  & &
      V_4 \arrow[rd, red, leftrightarrow,
      bend left = 30]
      &
      \\
      & % V_3 \arrow[ru, blue] \arrow[rr, blue] \arrow[loop left,
      % leftrightarrow, red]
      & & V_5  \\
      V_2 \arrow[rruu, blue] \arrow[rrru, blue]  & & & \\
    \end{tikzcd}
    \caption{$\tilde{V} = \{V_1,V_2,V_4,V_5\}$.}
    \label{fig:admg-example-proj-1245}
  \end{subfigure}
  \\
  \begin{subfigure}[b]{0.35\textwidth} \centering
    \begin{tikzcd}
      & & V_4 \arrow[ld, red, leftrightarrow, bend right = 30]
      &
      \\
      & V_3 \arrow[ru, blue] \arrow[rr, blue]  \arrow[rr, red, leftrightarrow, bend left = 30] & & V_5 \\
      V_2 \arrow[ru, blue] \arrow[rrru, blue] & & & \\
    \end{tikzcd}
    \caption{$\tilde{V} = \{V_2,V_3,V_4,V_5\}$.}
    \label{fig:admg-example-proj-2345}
  \end{subfigure}
  \quad
  \begin{subfigure}[b]{0.35\textwidth} \centering
    \begin{tikzcd}
      & & V_4 \arrow[rd, red, leftrightarrow, bend left = 30]
      &
      \\
      & & & V_5 \\
      V_2 \arrow[rruu, blue] \arrow[rrru, blue] & & & \\
    \end{tikzcd}
    \caption{$\tilde{V} = \{V_2,V_4,V_5\}$.}
    \label{fig:admg-example-proj-245}
  \end{subfigure}
  \caption{Marginalization of the graph in \Cref{fig:admg-example-1}
    on different $\tilde{V}$ (showing the trimmed graphs).}
  \label{fig:admg-example-proj}
\end{figure}

For the (trimmed) graph in \Cref{fig:admg-example-proj-12345}, we have
\[
  \textnot V_1 \mconn V_2, ~ \textnot V_2 \mconn V_4 \mid V_1, V_3,
\]
and no other m-separations. For example, we claim that
\[
    V_4 \mconn V_5 \mid L~\text{for all}~L \subseteq \{V_1,V_2,V_3\}.
  \]
  To show this, notice that if $V_3$ is not conditioned on, the path $V_4
  \ldedge V_3 \rdedge
  V_5$ is not ancestrally blocked; this can also be seen from the
  bidirected edge $V_4 \bdedge V_5$ in the marginalized graph for
  $\tilde{V} = \{V_1,V_2,V_4,V_5\}$ in
  \Cref{fig:admg-example-proj-1245}. However, if $V_3$ is conditioned
  on, the path $V_4 \bdedge V_1 \bdedge V_5$ is not
  ancestrally blocked because $V_1$ is an ancestor of $V_3$ and a collider
  in this path. For example, if $L = \{V_2,V_3\}$ there is a
  collider-connected path $V_4 \bdedge V_3 \bdedge V_5$ in
  \Cref{fig:admg-example-proj-2345} which is the marginalization of
  the walk $V_4 \bdedge V_1 \rdedge V_3 \ldedge V_1
  \bdedge V_5$ in \Cref{fig:admg-example-proj-12345}, so $V_4
  \mconn V_5 \mid \{V_2,V_3\}$ by
  \Cref{cor:mconn-colliderconn}.

\section{Gaussian linear system as a graphical statistical model}
\label{sec:conn-gauss-line}

This Section studies the Gaussian linear system in
\Cref{sec:gauss-line-syst} from a graphical
perspective. \textcite{drtonAlgebraicProblemsStructural2018} provided
an excellent review of algebraic problems and known results in such
models. We will not be so ambitious here, but instead focus on some
basic probabilistic properties of Gaussian linear systems and
relate them with graphical concepts introduced in
\Cref{sec:import-types-walks}.

From an abstract point of view, a statistical model is a collection of
probability distributions on a random vector. Let $V$ be a random
vector that takes values in $\mathbb{V} \subseteq \mathbb{R}^d$, and
the largest statistical model we will consider is denoted as
$\mathbb{P}(V)$, the set of all probability distributions of $V$ with
a density function (with respect to a dominating measure on
$\mathbb{V}$, which is usually the Lebesgue measure if the variables
are continuous or the counting measure if the variables if the
variables are discrete). With the possible addition of some regularity
(e.g.\ smoothness) conditions on the density function, this is often
referred to as the \emph{nonparametric model} in the statistics
literature.

First of all, let us introduce a function $\sigma$ on the walk algebra
that makes the connection between Gaussian linear systems and graphs
more explicit. The parameters $\beta$ and $\Lambda$ in a Gaussian
linear system can be more succinctly described as a weight function
$\sigma$ for the edges in its associated graph:
\begin{equation}
  \label{eq:sigma-basic-matrix}
  \beta = \sigma(W[V \rdedge V])\quad\text{and}\quad\Lambda =
  \sigma(W[V \bdedge V]),
\end{equation}
where $\sigma$ is applied entry-wise. This function $\sigma$ can be
extended to the entire walk algebra: the weight of a walk is the
product of the weights of its edges, and the weight of a set of walks
is the sum of their weights. For example,
\[
  \sigma(\{V_1 \bdedge V_4, V_1 \bdedge V_1
  \rdedge V_3 \rdedge V_4\}) = \lambda_{14} + \lambda_{11}
  \beta_{13} \beta_{34},
\]
As a convention, $\sigma(\id) = 1$ for the trivial walk $\id$, so
$\sigma(\Id) = \Id$ where the first identity matrix is a diagonal
matrix of trivial walks and the second is a diagonal matrix of
ones. Let $\GLS(\gG,\sigma)$ denote the set of distributions of $V$ in
a Gaussian linear system with respect to graph $\gG \in \DMG(V)$ and
weight function $\sigma$. If the linear system is non-singular, this
set contains a single distribution $\normal(0, \Sigma)$ where
\[
  \Sigma = \Sigma(\sigma) = \Sigma(\beta,\Lambda) = (\Id - \beta)^{-T}
  \Lambda (\Id - \beta)^{-1}
\]
is given by \eqref{eq:gls-dist}. Let $\GLS_{\textnormal{N}}(\gG)$
denote the set of all non-singular Gaussian linear systems:
\begin{align*}
  \GLS_{\textnormal{N}}(\gG) &= \bigcup_{\sigma~\text{is non-singular}}
  \GLS(\gG, \sigma) \\
  &= \left\{\normal\left(0, \Sigma(\beta,\Lambda)\right):~\text{non-singular $\beta$ and positive
    semi-definite
    $\Lambda$ given by \eqref{eq:sigma-basic-matrix}}\right\}.
\end{align*}
Similarly, let
$\GLS_{\textnormal{PN}}(\gG)$, $\GLS_{\textnormal{S}}(\gG)$, and
$\GLS_{\textnormal{PS}}(\gG)$ denote, respectively, the set of
principally non-singular, stable, and principally stable Gaussian
linear systems. Finally, we add a superscript ``$+$'' to these
notations to indicate the additional requirement that $\Lambda$ is
positive definite. This ensures that the normal distributions are not
degenerate (have a density function with respect to the Lebesgue
measure on $\mathbb{R}^d$), thus $\GLS^+_{\textnormal{N}}(\gG)$ is
indeed a submodel of $\mathbb{P}(V)$.

\subsection{Trek rule and path analysis}
\label{sec:trek-rule-path}

Our first result in this Section justified the definition of treks in
\Cref{sec:arcs}. This follows immediately from matching
\eqref{eq:gls-dist} to \eqref{eq:trek} term by term and using the
following graphical interpretation of the Neumann series.

\begin{proposition} \label{prop:neumann-series}
  Let $\sigma$ be the weight function of a stable linear system and
  $\beta$ be given by \eqref{eq:sigma-basic-matrix}. Then
  \begin{equation*}
    % \label{eq:neumann-series}
    (\Id - \beta)^{-1} = \sum_{q=0}^{\infty} \beta^q = % \Id +
    % \sum_{q=1}^{\infty}  \sigma(\{\beta[V \rdedge V]\}^q) =
    \Id +
    \sigma(W[V \rdpath V]).
  \end{equation*}
\end{proposition}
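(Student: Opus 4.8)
The plan is to prove the two displayed equalities separately. The first, $(\Id-\beta)^{-1}=\sum_{q=0}^{\infty}\beta^q$, is the classical Neumann series and relies only on stability; the second, $\sum_{q=0}^{\infty}\beta^q=\Id+\sigma(W[V\rdpath V])$, is where the walk algebra enters and amounts to the statement that $\sigma$ carries the length-graded matrix of directed walks to the matrix power series in $\beta$.

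For the Neumann series, I would start from the telescoping identity $(\Id-\beta)\sum_{q=0}^{N}\beta^q=\Id-\beta^{N+1}$. Stability means $\rho(\beta)<1$, so by Gelfand's formula $\|\beta^{N+1}\|\to 0$ in any submultiplicative matrix norm; hence the partial sums $\sum_{q=0}^{N}\beta^q$ form a Cauchy sequence converging to a limit $S$ with $(\Id-\beta)S=\Id$, i.e.\ $S=(\Id-\beta)^{-1}$. Note that non-singularity of $\Id-\beta$ is automatic here, since $\rho(\beta)<1$ precludes $1$ from being an eigenvalue of $\beta$.

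For the second equality, the key observation is that $\sigma$ is a homomorphism of dioids: by construction it sends set union to real addition and concatenation to real multiplication, and the weight of a walk is the product of its edge weights. Consequently $\sigma$ intertwines the two matrix products, so that $\sigma((W[V\rdedge V])^q)=(\sigma(W[V\rdedge V]))^q=\beta^q$ for every $q\ge 1$. The combinatorial content is simply that the $(j,k)$-entry of $(W[V\rdedge V])^q$ is the set of directed walks of length $q$ from $V_j$ to $V_k$, and summing the product $\beta_{jl_1}\cdots\beta_{l_{q-1}k}$ of edge weights over all such walks reproduces $(\beta^q)_{jk}$. A short induction on $q$, with base case the defining relation $\beta=\sigma(W[V\rdedge V])$ and inductive step using that $\sigma$ respects matrix multiplication entrywise, makes this precise.

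Finally I would assemble the pieces. Because $W[V\rdpath V]=\sum_{q=1}^{\infty}(W[V\rdedge V])^q$ decomposes the directed walks as a disjoint union graded by length, additivity of $\sigma$ over set union gives $\sigma(W[V\rdpath V])=\sum_{q=1}^{\infty}\beta^q$, and adding $\Id=\beta^0$ yields the claim. The one point requiring care, and the step I expect to be the main obstacle, is justifying this last interchange of $\sigma$ with an infinite union when $\gG$ has directed cycles, for then each walk set $W[V_j\rdpath V_k]$ is infinite and $\sigma$ of it is an infinite series of weights. The resolution is exactly stability: grading by length identifies the partial sums of $\sigma(W[V\rdpath V])$ with $\sum_{q=1}^{N}\beta^q$, which already converge by the Neumann argument, so the weighted sum over all directed walks is well-defined and equals $(\Id-\beta)^{-1}-\Id$.
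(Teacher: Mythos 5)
Your proof is correct and follows exactly the route the paper intends: the paper in fact states this proposition without proof (it is one of the results for which only a sketch or statement is given), and the implicit argument is precisely your combination of the classical Neumann series under $\rho(\beta)<1$ with the fact that $\sigma$ is a dioid homomorphism sending $(W[V\rdedge V])^q$ to $\beta^q$. Your closing remark correctly identifies the only delicate point in cyclic graphs — that $\sigma(W[V\rdpath V])$ is an infinite sum whose value is pinned down by the length grading built into the definition $W[V\rdpath V]=\sum_{q=1}^{\infty}(W[V\rdedge V])^q$ — and resolves it the same way the paper's notation implicitly does.
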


\begin{theorem} \label{thm:trek-rule}
  For any $\gG \in \DMG(V)$, $\P \in
  \GLS_{\textnormal{N}}(\gG)$, and $V_j,V_k \in V$, we have
  \[
    V_j \notrek V_k \ingraph{\gG} \Longrightarrow \Cov_{\P}(V_j, V_k)
    = 0 \Longleftrightarrow V_j \independent V_k \underdist{\P}.
  \]
  If further $\P \in \GLS_{\textnormal{S}}(\gG,\sigma)$ so $\sigma$ is
  stable, the covariance matrix of $V$ is given by
  \begin{equation}
    \label{eq:trek-rule}
    \Cov_{\P}(V) = \sigma(W[V \trek V \ingraph{\gG}]).
  \end{equation}
\end{theorem}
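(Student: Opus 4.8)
The plan is to establish the three assertions in a convenient order: first the covariance formula \eqref{eq:trek-rule} under stability, then the implication $V_j \notrek V_k \ingraph{\gG} \Rightarrow \Cov_{\P}(V_j,V_k)=0$ in the merely non-singular case (deduced from the stable formula by continuation), and finally the equivalence $\Cov_{\P}(V_j,V_k)=0 \Leftrightarrow V_j \independent V_k \underdist{\P}$, which is just the standard fact that uncorrelated jointly Gaussian coordinates are independent, as recorded in \Cref{sec:gauss-line-syst} and applied here to the bivariate margin of $\normal(0,\Sigma)$.

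For the covariance formula I would start from \eqref{eq:gls-dist}, $\Sigma = (\Id-\beta)^{-T}\Lambda(\Id-\beta)^{-1}$, and substitute the Neumann expansion of \Cref{prop:neumann-series}, namely $(\Id-\beta)^{-1} = \Id + \sigma(W[V\rdpath V])$, which is valid since $\sigma$ is stable. Taking transposes and using that an edge weight is unchanged when the edge is traversed backward (so $\sigma(W^T)=\sigma(W)^T$) together with \eqref{eq:left-directed-walk} gives $(\Id-\beta)^{-T} = \Id + \sigma(W[V\ldpath V])$. Hence
\[
  \Sigma = \big(\Id + \sigma(W[V\ldpath V])\big)\,\Lambda\,\big(\Id + \sigma(W[V\rdpath V])\big),
\]
and since $\Lambda = \sigma(W[V\bdedge V])$, the right-hand side is exactly the image under $\sigma$ of the product defining $W[V\trek V]$ in \eqref{eq:trek}. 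It then remains only to check that applying $\sigma$ term by term to that product reproduces $\sigma(W[V\trek V])$.

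The crux is therefore the homomorphism property of $\sigma$ on this particular expression. The map sends $+$ to real addition and $\cdot$ to real multiplication only when the unions involved are genuinely disjoint, so that no walk is weighted twice. Here disjointness is guaranteed by \Cref{prop:arc-collider}(1): every trek has exactly one bidirected edge and no collider, so it factorizes uniquely as a left-directed prefix, that single bidirected edge, and a right-directed suffix. Consequently the four blocks obtained by expanding $(\Id + W[V\ldpath V])\cdot W[V\bdedge V]\cdot(\Id + W[V\rdpath V])$ are pairwise disjoint and each trek arises from exactly one triple, so no cancellation or double counting occurs and $\sigma(W[V\trek V]) = \Sigma$. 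The same disjointness at the level of directed walks of fixed length is what underlies \Cref{prop:neumann-series} itself, giving $\sigma((W[V\rdedge V])^q)=\beta^q$.

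For the first displayed implication, $\P \in \GLS_{\textnormal{N}}(\gG)$ need not be stable, so the Neumann series may diverge and the formula above does not apply directly; extending from the stable to the non-singular regime is the main obstacle. I would circumvent it by a perturbation argument. For $t$ in a neighbourhood of $0$ the matrix $t\beta$ has spectral radius below $1$, so $(t\beta,\Lambda)$ is a stable system with the same graph $\gG$ (for $t\neq 0$ the zero pattern of $t\beta$ equals that of $\beta$), and the trek rule expresses its covariance entry as $\sigma_t(W[V_j\trek V_k])$ with directed-edge weights scaled by $t$. If $V_j \notrek V_k \ingraph{\gG}$, then $W[V_j\trek V_k\ingraph{\gG}]=\emptyset$, so this entry vanishes identically for all small $t$. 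But $\Sigma^{(t)}_{jk}$, the $(j,k)$-entry of $(\Id-t\beta)^{-T}\Lambda(\Id-t\beta)^{-1}$, is a rational function of $t$ whose denominator $\det(\Id-t\beta)^{2}$ is nonzero at $t=1$ by non-singularity; a rational function vanishing on an interval is identically zero, so $\Sigma_{jk}=\Sigma^{(1)}_{jk}=0$. Combined with the Gaussian uncorrelated-implies-independent fact, this yields the full statement.
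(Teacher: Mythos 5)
Your proposal is correct, and for the heart of the theorem it coincides with the paper's (very terse) argument: the paper derives \eqref{eq:trek-rule} exactly as you do, by substituting the Neumann-series identity of \Cref{prop:neumann-series} and its transpose into \eqref{eq:gls-dist} and matching the result term by term against the definition \eqref{eq:trek}. Your explicit justification of why $\sigma$ respects this particular product --- the unique factorization of a trek into left-directed prefix, single bidirected edge, and right-directed suffix guaranteed by \Cref{prop:arc-collider}, so that no walk is counted twice --- is left implicit in the paper and is worth spelling out. Where you genuinely depart from (and improve on) the paper is the first displayed implication for $\P \in \GLS_{\textnormal{N}}(\gG)$ without stability: the paper simply asserts that the theorem ``follows immediately'' from the Neumann-series matching, which on its face only covers $\GLS_{\textnormal{S}}$. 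Your continuation argument --- scale to $t\beta$ with small $t$ so that the system is stable with the same graph, observe that $V_j \notrek V_k \ingraph{\gG}$ forces $\Sigma^{(t)}_{jk}=0$ there, and then note that $\Sigma^{(t)}_{jk}$ is a rational function of $t$ with denominator $\det(\Id-t\beta)^2$ nonvanishing at $t=1$ --- closes this gap cleanly; it is the standard polynomial-identity device and costs nothing in generality. The final equivalence with independence is, as you say, just the Gaussian fact recorded in \Cref{sec:gauss-line-syst}. The only point I would flag (which the paper also glosses over) is that $\sigma$ applied to the infinite set $W[V\rdpath V]$ should be read as the limit of the partial sums grouped by walk length, which is how \Cref{prop:neumann-series} makes sense under stability.
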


Equation \eqref{eq:trek-rule} is known as the \emph{trek rule} in the
literature. The term ``trek'' was first used by
\textcite{spirtesCausationPredictionSearch1993} in the context of
directed acyclic graphs in a slightly different way. % that corresponds to
% $\overset{\textnormal{d}}{\fullsquigfull}$ in our notation (an arc
% consisting of directed edges with two endpoint arrowheads)
By definition, treks have two (possibly trivial) directed ``legs'' that
are separated by a bidirected edge. Some authors use treks to refer to
the tuple of those three components
\parencite{sullivantTrekSeparationGaussian2010}.

In acyclic graphs, the trek rule admits a much simpler form which only
involves paths in the graph. The result is based on the following
partitions of treks and m-connected paths from $V_j$ to $V_k$ ($V_j
\neq V_k$):
\begin{align*}
  W[V_j \trek V_k] &= P[V_j \trek V_k] + \sum_{V_r \in V} W[V_j \trek
                     V_k~\text{via root}~V_r], \\
  P[V_j \mconnarc V_k] &= P[V_j \trek V_k] + \sum_{V_r \in V} P[V_j \dconnarc
                         V_k~\text{via root}~V_r],
\end{align*}
where the root of a directed arc is defined as the unique vertex on
the walk without an arrowhead pointing to it, that is,
\begin{equation}
  \label{eq:dconn-arc-root}
  P[V_j \dconnarc V_k~\text{via root}~V_r] =
  \begin{cases}
      P[V_j \rdpath V_k],& \text{if}~V_r = V_j, \\
      P[V_j \ldpath V_k],& \text{if}~V_r = V_k, \\
      P[V_j \ldpath V_r \rdpath V_k],&\text{otherwise},
  \end{cases}
\end{equation}
and the root of a trek is defined as its first repeated vertex (if the
trek is not a path) and can be formally defined as
\[
W[V_j \trek V_k~\text{via root}~V_r] = \left\{w_1 \cdot W[V_r \trek
       V_r]
       \cdot w_2: w_1 \cdot w_2 \in P[V_j \dconnarc V_k~\text{via
       root}~V_r]
       \right\}.
\]
By the trek rule, $\Var(V_r) = \sigma(W[V_r \trek V_r])$ for all $V_r
\in V$ if the linear system is stable. This implies the next Theorem
known as \emph{Wright's path analysis}
\parencite{wrightRelativeImportanceHeredity1920,wright34_method_path_coeff}. Note
that for acyclic graphs, any non-singular linear system must be
principally stable, that is,
\[
  \gG \in \ADMG(V) \Longrightarrow \GLS_{\textnormal{N}}(\gG) =
  \GLS_{\textnormal{PS}}(\gG),
\]
because all eigenvalues of $\beta$ must be zero ($\beta$ can be
rearranged into a strictly triangular matrix).

\begin{theorem} \label{thm:path-analysis}
  Consider $\P \in \GLS_{\textnormal{N}}(\gG,\sigma)$ for some $\gG \in
  \ADMG(V)$. Then for any $V_j, V_k
  \in V$, $j \neq k$, their covariance is given by
  \begin{equation*}
    % \label{eq:path-analysis}
    % \begin{split}
      \Cov_{\P}(V_j,V_k) = \sigma(P[V_j \trek V_k \ingraph{\gG}])
      + \sum_{V_r \in V} \sigma(P[V_j
    \dconnarc V_k~\textnormal{via root}~V_r \ingraph{\gG}]) \cdot
      \Var_{\P}(V_r).
    % \end{split}
  \end{equation*}
\end{theorem}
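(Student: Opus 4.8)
The plan is to reduce the statement to the trek rule (\Cref{thm:trek-rule}) and then evaluate the weight function $\sigma$ on the stated root partition of $W[V_j \trek V_k]$. First I would observe that acyclicity makes everything finite and stable: since $\gG \in \ADMG(V)$, the matrix $\beta$ can be rearranged into a strictly triangular matrix, so it and every principal submatrix have spectral radius $0$; hence any non-singular system is principally stable, and in particular stable, so that $\GLS_{\textnormal{N}}(\gG) = \GLS_{\textnormal{PS}}(\gG)$. Consequently \Cref{thm:trek-rule} applies and yields $\Cov_{\P}(V_j,V_k) = \sigma(W[V_j \trek V_k])$ for all $j,k$, and on the diagonal $\Var_{\P}(V_r) = \sigma(W[V_r \trek V_r])$. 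Acyclicity also guarantees that every directed walk is a directed path, so $W[V \rdpath V] = P[V \rdpath V]$ and each set $W[V_r \trek V_r]$ is finite; thus all the weight sums $\sigma(\cdot)$ below are finite and no convergence issue arises.

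Next I would apply $\sigma$ to the given partition $W[V_j \trek V_k] = P[V_j \trek V_k] + \sum_{V_r \in V} W[V_j \trek V_k~\textnormal{via root}~V_r]$. Because $\sigma$ carries disjoint set union to summation of weights and concatenation to multiplication, and because the partition is into pairwise disjoint families (classified by whether the trek is a path, and if not, by its first repeated vertex), this gives
\[
  \sigma(W[V_j \trek V_k]) = \sigma(P[V_j \trek V_k]) + \sum_{V_r \in V} \sigma(W[V_j \trek V_k~\textnormal{via root}~V_r]).
\]

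The heart of the argument is to evaluate each root term. Using the definition $W[V_j \trek V_k~\textnormal{via root}~V_r] = \{w_1 \cdot t \cdot w_2 : w_1 \cdot w_2 \in P[V_j \dconnarc V_k~\textnormal{via root}~V_r],\, t \in W[V_r \trek V_r]\}$ together with the multiplicativity $\sigma(w_1 \cdot t \cdot w_2) = \sigma(w_1 \cdot w_2)\,\sigma(t)$, I would factor the weight sum as
\[
  \sigma(W[V_j \trek V_k~\textnormal{via root}~V_r]) = \sigma(P[V_j \dconnarc V_k~\textnormal{via root}~V_r]) \cdot \sigma(W[V_r \trek V_r]) = \sigma(P[V_j \dconnarc V_k~\textnormal{via root}~V_r]) \cdot \Var_{\P}(V_r),
\]
the last equality by the diagonal trek rule. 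Substituting this into the displayed expansion of $\sigma(W[V_j \trek V_k])$ reproduces the claimed formula.

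The main obstacle is justifying the factorization in the last display, which requires the insertion map $(w_1 \cdot w_2,\, t) \mapsto w_1 \cdot t \cdot w_2$ to be a bijection onto $W[V_j \trek V_k~\textnormal{via root}~V_r]$; otherwise $\sigma$ of the image could differ from the product of the two weight sums. I would establish injectivity by recovering the data from a given trek: the root $V_r$ is its first repeated vertex, the prefix $w_1$ is the segment up to the first occurrence of $V_r$, the suffix $w_2$ is the directed portion reaching $V_k$, and $t$ is the intervening self-trek; uniqueness of this cut then follows from $w_1 \cdot w_2$ being a path (so $V_r$ occurs exactly once in it) together with the left-directed/bidirected/right-directed shape of $t$. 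Confirming that these families are genuinely disjoint and exhaust $W[V_j \trek V_k]$—i.e. that the stated partition holds—is the same root-based bookkeeping, and is where I would concentrate the care.
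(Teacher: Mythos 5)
Your proposal is correct and follows essentially the same route as the paper, which derives the theorem from the trek rule (\Cref{thm:trek-rule}) applied to the root-based partition of $W[V_j \trek V_k]$, using acyclicity to guarantee $\GLS_{\textnormal{N}}(\gG) = \GLS_{\textnormal{PS}}(\gG)$ and hence $\Var_{\P}(V_r) = \sigma(W[V_r \trek V_r])$. The only difference is that you make explicit the bijectivity of the insertion map $(w_1 \cdot w_2, t) \mapsto w_1 \cdot t \cdot w_2$ needed for the weight factorization, a point the paper leaves implicit.
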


\subsection{Marginalization of Gaussian linear systems}
\label{sec:marg-gauss-line}

Consider $\P \in \GLS_{\textnormal{N}}(V)$ and $\tilde{V} \subseteq
V$. A natural probabilistic question is: what is the marginal
distribution of $\tilde{V}$? Because $V$ is multivariate normal, so is
its sub-vector $\tilde{V}$. But we can say much more: in general, the
marginal distribution is a Gaussian linear system with respect to the
marginal graph.

To see this, let us denote $U = V \setminus \tilde{V}$ and partition
the linear system as
  \begin{align*}
    \tilde{ V} &= %  \alpha_{\tilde{V}} +
                     \beta_{\tilde{V}
                    \tilde{V}}^T \tilde{ V} +  \beta_{U \tilde{V}}^T
                    U +  E_{\tilde{V}}, \\
     U &= %  \alpha_{U} +
             \beta_{\tilde{V}
            U}^T \tilde{ V} +  \beta_{U U}^T  U
            +  E_{U}.
  \end{align*}
By eliminating $U$, we obtain
\begin{align}
  \tilde{V} &= \left\{\beta_{\tilde{V} \tilde{V}}^T  +
        \beta_{U \tilde{V}}^T (\Id -
       \beta_{UU})^{-T}  \beta_{\tilde{V}
       U}^T \right\} \tilde{ V} + \left\{ \beta_{U \tilde{V}}^T (\Id -
       \beta_{UU})^{-T}  E_{U} + E_{\tilde{V}}
     \right\} \tag*{} \\
    &= \sigma[\tilde{V} \ldpath \tilde{V} \mid
       \tilde{V}]\, \tilde{V} + \left\{ \sigma[\tilde{V} \ldpath U
       \mid \tilde{V}]\, E_{U} + E_{\tilde{V}} \right\}, \label{eq:eliminate-u}
\end{align}
where the first equality requires the system to be principally
non-singular (so we can invert $\Id - \beta_{UU}$), and the second
equality requires principal stability, so we can use
\Cref{prop:neumann-series} and \eqref{eq:unblocked-directed-walk} to
obtain
\[
(\Id - \beta_{U U})^{-1} = \Id + \sigma(W[U \rdpath U \mid
\tilde{V}]).
\]
From \eqref{eq:eliminate-u}, it is not difficult to prove the
following Theorem which motivates the definition of marginal graphs in
\Cref{sec:marg-direct-mixed}.

\begin{theorem}[Marginalization of linear
  systems] \label{thm:latent-proj-linear-system}
  For any $\gG \in \ADMG(V)$ and $\tilde{V} \subseteq \bm V$, we have
  \[
    \P \in \GLS_{\textnormal{PS}}(\gG,\sigma) \Longrightarrow
    \marg_{\tilde{V}}(\P) \in
    \GLS_{\textnormal{PN}}(\marg_{\tilde{V}}(\gG),
    \marg_{\tilde{V}}(\sigma)),
  \]
  where the weight function $\tilde{\sigma} =
  \marg_{\tilde{V}}(\sigma)$ of the marginal linear system for
  $\tilde{V}$ is generated by
  \begin{align*}
    % \begin{split}
    \tilde{\beta} = \tilde{\sigma}(W[\tilde{V}
    \rdedge \tilde{V} \ingraph{\tildegG}]) = \sigma(W[\tilde{V}
    \rdpath \tilde{V} \mid \tilde{V}
    \ingraph{\gG}]), % \label{eq:proj-beta}
    \\
    \tilde{\Lambda} = \tilde{\sigma}(W[\tilde{V}
    \bdedge \tilde{V} \ingraph{\tildegG}]) = \sigma(W[\tilde{V}
    \trek \tilde{V} \mid \tilde{V}
    \ingraph{\gG}]). % \label{eq:proj-lambda}
    % \end{split}
  \end{align*}
  % We shall write $\tilde{\sigma} = \marg_{\tilde{V}}(\sigma)$.
\end{theorem}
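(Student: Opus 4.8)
The plan is to build on the reduced equation \eqref{eq:eliminate-u}, which already exhibits $\tilde{V}$ as the solution of a linear system
\[
  \tilde{V} = \tilde{\beta}^T \tilde{V} + \tilde{E}, \qquad
  \tilde{E} = \sigma[\tilde{V} \ldpath U \mid \tilde{V}]\, E_U + E_{\tilde{V}},
\]
with $U = V \setminus \tilde{V}$. Reading off the coefficient matrix and transposing gives $\tilde{\beta} = \sigma(W[\tilde{V} \ldpath \tilde{V} \mid \tilde{V}])^T = \sigma(W[\tilde{V} \rdpath \tilde{V} \mid \tilde{V}])$, using that $\sigma$ commutes with transpose because reversing a walk preserves the product of its edge weights. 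Expanding $W[\tilde{V} \rdpath \tilde{V} \mid \tilde{V}]$ via \eqref{eq:unblocked-directed-walks} and applying \Cref{prop:neumann-series} to the $U$-subsystem --- legitimate precisely because principal stability licenses the Neumann series for $(\Id - \beta_{UU})^{-1}$ --- recovers the claimed $\tilde{\beta} = \beta_{\tilde{V}\tilde{V}} + \beta_{\tilde{V}U}(\Id - \beta_{UU})^{-1}\beta_{U\tilde{V}}$. This first half is essentially bookkeeping.

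The substantive step is the identity $\tilde{\Lambda} = \sigma(W[\tilde{V} \trek \tilde{V} \mid \tilde{V}])$. Writing $A = \sigma(W[\tilde{V} \ldpath U \mid \tilde{V}])$ so that $\tilde{E} = A E_U + E_{\tilde{V}}$, I would expand the quadratic form with $\Cov(E) = \Lambda$ partitioned along $(U, \tilde{V})$ to obtain
\[
  \tilde{\Lambda} = \Cov(\tilde{E}) = \Lambda_{\tilde{V}\tilde{V}} + A\Lambda_{U\tilde{V}} + \Lambda_{\tilde{V}U}A^T + A\Lambda_{UU}A^T.
\]
I would then match these four summands to the four terms of the unblocked-trek expansion \eqref{eq:unblocked-treks} with $L = \tilde{V}$, $L^c = U$, i.e.\ to the bare bidirected edges, the bidirected edges carrying a right directed leg, those carrying a left directed leg, and those flanked by two directed legs. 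Since \Cref{prop:neumann-series} also gives $\sigma(W[U \rdpath \tilde{V} \mid \tilde{V}]) = (\Id - \beta_{UU})^{-1}\beta_{U\tilde{V}} = A^T$, applying $\sigma$ to each trek term returns exactly one of the four summands above. I expect this correspondence --- keeping straight which cross-covariance block of $\tilde{E}$ is produced by which trek type (no leg / right leg / left leg / both legs), and in particular tracking the transposes --- to be the main place where care is needed.

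Finally I would check membership in $\GLS_{\textnormal{PN}}(\marg_{\tilde{V}}(\gG), \tilde{\sigma})$. By the definition of the marginal graph, the supports of $\tilde{\beta}$ and $\tilde{\Lambda}$ lie in the directed and bidirected edges of $\tildegG$, so $\tilde{\sigma}$ is a genuine weight function on $\tildegG$. Marginalization preserves acyclicity (the corollary of \Cref{prop:marg-preserve-walks-1} obtained by setting $J=K=\{V_j\}$ and $L=\emptyset$), so $\tildegG \in \ADMG(\tilde{V})$ and $\tilde{\beta}$ can be arranged to be strictly triangular; hence every principal submatrix of $\tilde{\beta}$ has spectral radius zero and every principal submatrix of $\Id - \tilde{\beta}$ is invertible, which is exactly principal non-singularity. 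As the covariance of $\tilde{E}$, the matrix $\tilde{\Lambda}$ is automatically positive semi-definite. Applying \eqref{eq:gls-dist} to the reduced system then yields $\tilde{V} \sim \normal(0, (\Id - \tilde{\beta})^{-T}\tilde{\Lambda}(\Id - \tilde{\beta})^{-1})$, which necessarily equals the true marginal $\normal(0, \Sigma_{\tilde{V}\tilde{V}})$; hence $\marg_{\tilde{V}}(\P)$ is precisely the distribution generated by $(\tilde{\beta}, \tilde{\Lambda})$ and lies in $\GLS_{\textnormal{PN}}$.
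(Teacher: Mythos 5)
Your proposal is correct and follows the paper's own route: it starts from the eliminated system \eqref{eq:eliminate-u}, identifies $\tilde{\beta}$ via \Cref{prop:neumann-series} and \eqref{eq:unblocked-directed-walks}, and matches $\Cov(\tilde{E})$ term by term to the unblocked-trek expansion \eqref{eq:unblocked-treks} with $L = \tilde{V}$. The paper leaves these steps as ``not difficult''; you have simply filled them in, including the acyclicity argument for principal non-singularity that the paper handles via the Schur complement remark.
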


From a linear algebra perspective, marginalization is just performing
block Gaussian elimination. In
particular, $(\Id - \tilde{\beta})$ is the Schur complement of $(\Id -
\beta)$. By using the quotient property of Schur complementation, it
is not difficult to show that the marginal of a principally
non-singular system is also principally non-singular:
\[
  \marg_{\tilde{V}}(\GLS_{\textnormal{PN}}(\gG)) \subseteq
  \GLS_{\textnormal{PN}}(\marg_{\tilde{V}}(\gG)).
\]

Note that marginalization does not preserves principal stability in
general. One example where principal stability is
preserved is when $\gG$ is acyclic, so $\beta$ can be rearranged as a
strictly triangular matrix. Similarly, principal stability is preserved when
$\tilde{V}$ is ancestral, so the matrix $(\Id - \beta)$ is block
triangular. Principal stability is also preserved
when all entries of $\beta$ are non-negative. In this case, $(\Id -
\beta)$ is an instance of M-matrices in linear algebra that are closed
under Schur complementation \parencite[p.\
125]{johnsonClosureProperties2005}.

\subsection{Conditional independences}
\label{sec:cond-indep}

As mentioned in \Cref{sec:gauss-line-syst}, conditional independences
in multivariate normal variables can be read off from the inverse
covariance matrix (after possible marginalization). By inverting
\eqref{eq:gls-dist}, we obtain
\[
  \Sigma^{-1} = (\Id - \beta) \Lambda^{-1} (\Id - \beta)^T,
\]
if the linear system is non-singular and $\Lambda$ is positive
definite. Thus, $V_j$ and $V_k$ are conditionally independent given
the rest of the variables if and only if
\begin{equation}
  (\Sigma^{-1})_{jk} = \sum_{V_l,V_m \in V} (\delta_{jl} - \beta_{jl})
  ( \Lambda^{-1})_{lm} (\delta_{km} - \beta_{km})
  % (\Lambda^{-1})_{jk} - \sum_{m \in [d]}
     % \beta_{jm} (\Lambda^{-1})_{mk} - \sum_{l \in [d]} (\Lambda^{-1})_{jl}
     % \beta_{kl} + \sum_{m,l \in [d]} \beta_{jm} (\Lambda^{-1})_{ml}
     % \beta_{kl}
     = 0. \label{eq:inverse-covariance-decomposition}
\end{equation}
By corresponding the summands in
\eqref{eq:inverse-covariance-decomposition} to walks on the graph, we
see that they all vanish if $V_j$ and $V_k$ are connected by a
sequence of colliders.

\begin{proposition} \label{prop:cond-indep-lsem-smallest}
  Suppose $\P \in \GLS_{\textnormal{N}}^+(\gG)$ for some $\gG \in
  \DMGc(V)$. % and any $\tilde{V} \subseteq \bm V$.
  Then for any $V_j, V_k \in V$, $V_j \neq V_k$, we have
  \[
    \textnot V_j \colliderconn V_k \ingraph{\gG} \Longrightarrow V_j
    \independent V_k
    \mid V \setminus \{V_j,V_k\} \underdist{\P}. % V_{[\vdim]
    % \setminus \{j,k\}}.
  \]
\end{proposition}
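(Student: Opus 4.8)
The plan is to reduce the conditional independence to the vanishing of a single entry of the inverse covariance matrix, and then to read that entry off the walk algebra. Since $\P \in \GLS_{\textnormal{N}}^+(\gG)$, the matrix $\Lambda$ is positive definite and $\Id - \beta$ is invertible, so $\Sigma = (\Id-\beta)^{-T}\Lambda(\Id-\beta)^{-1}$ is positive definite (it is a congruence of a positive-definite matrix by the invertible matrix $(\Id-\beta)^{-1}$). By the Gaussian characterization recalled in \Cref{sec:gauss-line-syst}, $V_j \independent V_k \mid V\setminus\{V_j,V_k\} \underdist{\P}$ holds if and only if $(\Sigma^{-1})_{jk}=0$. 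Hence it suffices to show that $\textnot V_j \colliderconn V_k \ingraph{\gG}$ forces $(\Sigma^{-1})_{jk}=0$, and for this I would work directly from the decomposition \eqref{eq:inverse-covariance-decomposition} and argue that every summand vanishes (so that no cancellation is needed).

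First I would identify each of the three factors in a generic summand $(\delta_{jl}-\beta_{jl})(\Lambda^{-1})_{lm}(\delta_{km}-\beta_{km})$ with an entry of a basic walk matrix. The factor $\delta_{jl}-\beta_{jl}$ is nonzero only when $l=j$ or $V_j \rdedge V_l \ingraph{\gG}$, so it is supported on the $(j,l)$-entry of $\Id + W[V\rdedge V]$; symmetrically $\delta_{km}-\beta_{km}$ is nonzero only when $m=k$ or $V_k \rdedge V_m \ingraph{\gG}$, so it is supported on the $(k,m)$-entry of $\Id+W[V\rdedge V]$, equivalently the $(m,k)$-entry of $\Id+W[V\ldedge V]$. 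The middle factor is the crux and requires a short lemma on the zero pattern of $\Lambda^{-1}$.

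The key step is to show that $(\Lambda^{-1})_{lm}\neq 0$ implies either $l=m$ or there is a bidirected walk from $V_l$ to $V_m$, i.e. the $(l,m)$-entry of $\Id+\sum_{q\geq1}(W[V\bdedge V])^q$ is nonempty. Because $\Lambda_{lm}\neq 0 \iff V_l \bdedge V_m$, the connected components of the support graph of $\Lambda$ are exactly the bidirected-connected components (districts) of $\gG$. Ordering the vertices so that each such component is contiguous makes $\Lambda$ block diagonal, whence $\Lambda^{-1}$ is block diagonal with the same block structure; thus $(\Lambda^{-1})_{lm}=0$ whenever $V_l$ and $V_m$ lie in different components. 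This is the main obstacle, but it is a standard fact about inverses and the block-diagonal argument disposes of it cleanly.

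Finally I would assemble the three supports. A nonzero summand forces the $(j,l)$-entry of $\Id+W[V\rdedge V]$, the $(l,m)$-entry of $\Id+\sum_{q\geq1}(W[V\bdedge V])^q$, and the $(m,k)$-entry of $\Id+W[V\ldedge V]$ to be simultaneously nonempty, so the matrix product
\[
(\Id+W[V\rdedge V])\cdot\Big\{\Id+\sum_{q\geq1}(W[V\bdedge V])^q\Big\}\cdot(\Id+W[V\ldedge V])
\]
has a nonempty $(j,k)$-entry. Tracing arrowheads along the concatenated walk, each $V_l$, each $V_m$, and every vertex of the bidirected sub-walk carries an arrowhead on both sides, so every non-endpoint is a collider; since $j\neq k$, the subtracted $\Id$ in \eqref{eq:collider-conn} is irrelevant and this exhibits $V_j \colliderconn V_k \ingraph{\gG}$. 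Contrapositively, $\textnot V_j \colliderconn V_k \ingraph{\gG}$ makes every summand of \eqref{eq:inverse-covariance-decomposition} vanish, giving $(\Sigma^{-1})_{jk}=0$ and hence the stated conditional independence. (The canonical hypothesis $\gG\in\DMGc(V)$ is what makes a positive-definite $\Lambda$ compatible with the graph, since it forces each bidirected loop $V_l\bdedge V_l$ to be present, matching $\Lambda_{ll}>0$.)
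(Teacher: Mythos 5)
Your proposal is correct and follows essentially the same route the paper sketches: expand $(\Sigma^{-1})_{jk}$ via \eqref{eq:inverse-covariance-decomposition}, use the block-diagonal (district) structure of $\Lambda$ to get the key fact \eqref{eq:diffdist-lambda-inv-0} that $(\Lambda^{-1})_{lm}\neq 0$ forces $V_l \samedist V_m$ or $l=m$, and match the support of each summand to the $(j,k)$-entry of \eqref{eq:collider-conn}. You have simply filled in the details the paper leaves implicit, including the reduction of the conditional independence to $(\Sigma^{-1})_{jk}=0$ and the arrowhead check that every non-endpoint of the assembled walk is a collider.
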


The key to prove \Cref{prop:cond-indep-lsem-smallest} is the
observation that for $V_l,V_m \in V$, $V_l \neq V_m$,
\begin{equation}
  \label{eq:diffdist-lambda-inv-0}
  \textnot V_l \samedist V_m \ingraph{\gG} \Longrightarrow
  (\Lambda^{-1})_{lm} = 0
\end{equation}
because $\Lambda$ can be rearranged into a block diagonal matrix for
which $V_l$ and $V_m$ belong to different blocks.

The next Theorem shows that m-separation implies conditional
independence in Gaussian linear systems. This non-trivial result is
independently discovered by
\textcite{spirtesDirectedCyclicGraphical1995,kosterMarkovPropertiesNonrecursive1996a},
and a full proof using directed mixed graphs is given in
\textcite{kosterValidityMarkovInterpretation1999}. We will see below
that it is a simple corollary of the results stated above.

\begin{theorem} \label{thm:msep-lsem}
  Suppose $\P \in \GLS_{\textnormal{N}}^+(\gG)$ for some $\gG \in
  \DMGc(V)$.
  % Suppose $V$ follows a principally nonsingular
  % Gaussian linear system with respect to a canonical directed
  % mixed graph $\gG$
  % and weight function $\sigma$. Further, suppose $\Lambda = \sigma(W[V
  % \bdedge V])$ is positive definite.
  Then for all disjoint $J, K, L \subseteq V$, we have
  \begin{equation}
    \label{eq:msep-lsem}
    \textnot J \mconn K \mid L \ingraph{\gG} \Longrightarrow J
    \independent K \mid L \underdist{\P}.
  \end{equation}
\end{theorem}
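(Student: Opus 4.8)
The plan is to reduce the set-level statement to the pairwise case and then pass to the smallest marginal graph, where \Cref{prop:cond-indep-lsem-smallest} applies directly. First, since $\P$ is multivariate normal, the conditional independence $J \independent K \mid L \underdist{\P}$ is equivalent to the vanishing of the conditional cross-covariance block of $V_J$ and $V_K$ given $V_L$, which (being constant in the value of $V_L$) vanishes if and only if each entry $\Cov_{\P}(V_j, V_k \mid V_L) = 0$, i.e.\ if and only if $V_j \independent V_k \mid L \underdist{\P}$ for every $V_j \in J$ and $V_k \in K$. On the graphical side, $\textnot J \mconn K \mid L \ingraph{\gG}$ means, by definition, exactly that $\textnot V_j \mconn V_k \mid L \ingraph{\gG}$ for all such pairs. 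It therefore suffices to prove the implication for singletons $J = \{V_j\}$ and $K = \{V_k\}$.

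Fix disjoint $\{V_j\}, \{V_k\}, L$ and set $\tilde{V} = \{V_j, V_k\} \cup L$. The idea is to pass to the marginal over $\tilde{V}$, where $L$ becomes ``everything else''. On the distribution side, $\marg_{\tilde{V}}(\P) = \normal(0, \Sigma_{\tilde{V}, \tilde{V}})$, and because $L = \tilde{V} \setminus \{V_j, V_k\}$, checking $V_j \independent V_k \mid L$ is the same as checking conditional independence given all the remaining coordinates of the marginal model, i.e.\ $(\Sigma_{\tilde{V}, \tilde{V}}^{-1})_{jk} = 0$ (note $\Sigma_{\tilde{V}, \tilde{V}} \succ 0$ because $\Sigma \succ 0$, so its inverse exists). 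On the graph side, \Cref{cor:mconn-colliderconn} turns the hypothesis into a statement about the smallest marginal graph: $\textnot V_j \mconn V_k \mid L \ingraph{\gG} \Longleftrightarrow \textnot V_j \colliderconn V_k \ingraph{\marg_{\tilde{V}}(\gG)}$.

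Granting that $\marg_{\tilde{V}}(\P) \in \GLS_{\textnormal{N}}^+(\marg_{\tilde{V}}(\gG))$ and that $\marg_{\tilde{V}}(\gG)$ is again canonical, the proof closes immediately: \Cref{prop:cond-indep-lsem-smallest} applied to the marginal model gives $\textnot V_j \colliderconn V_k \ingraph{\marg_{\tilde{V}}(\gG)} \Longrightarrow V_j \independent V_k \mid \tilde{V} \setminus \{V_j, V_k\} \underdist{\marg_{\tilde{V}}(\P)}$, which is $V_j \independent V_k \mid L \underdist{\P}$. Chaining this with the two equivalences above yields the desired implication. That $\marg_{\tilde{V}}(\gG)$ is canonical is easy: a bidirected loop $V_j \bdedge V_j$ of $\gG$ survives as the trivial, unblocked trek $V_j \trek V_j \mid \tilde{V}$, so every bidirected loop is retained.

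The main obstacle is justifying $\marg_{\tilde{V}}(\P) \in \GLS_{\textnormal{N}}^+(\marg_{\tilde{V}}(\gG))$. \Cref{thm:latent-proj-linear-system} gives exactly this, identifying the marginal error covariance as $\tilde{\Lambda} = \sigma(W[\tilde{V} \trek \tilde{V} \mid \tilde{V}])$, but it is stated only for acyclic, principally stable systems, whereas the present theorem allows any canonical, merely non-singular $\gG$ — in particular cyclic ones. To bridge this I would argue via Schur complements directly on $\Sigma = (\Id - \beta)^{-T}\Lambda(\Id - \beta)^{-1}$: the marginal precision $(\Sigma_{\tilde{V},\tilde{V}})^{-1}$ is the Schur complement of the $U$-block of $K = \Sigma^{-1} = (\Id - \beta)\Lambda^{-1}(\Id - \beta)^T$, which is well-defined since $K \succ 0$ makes $K_{UU}$ invertible, and one checks that it factors as $(\Id - \tilde{\beta})\tilde{\Lambda}^{-1}(\Id - \tilde{\beta})^T$ with $\tilde{\beta}$ and $\tilde{\Lambda}$ supported, respectively, on the directed and bidirected edges of $\marg_{\tilde{V}}(\gG)$. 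Positive definiteness of $\tilde{\Lambda}$ is then automatic: the eliminated error has the form $\tilde{E} = \begin{pmatrix}\Id & A\end{pmatrix} E$ for a full-row-rank prefactor, so $\tilde{\Lambda} = \begin{pmatrix}\Id & A\end{pmatrix}\Lambda\begin{pmatrix}\Id & A\end{pmatrix}^T \succ 0$. Confirming that this Schur complement indeed reproduces the walk sums of \Cref{thm:latent-proj-linear-system} in the cyclic case — so that the support pattern matches $\marg_{\tilde{V}}(\gG)$ — is the one step requiring genuine care.
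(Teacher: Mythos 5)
Your proposal follows essentially the same route as the paper's proof: reduce to singleton $J,K$ via normality, marginalize onto $\tilde V=\{V_j,V_k\}\cup L$, convert the hypothesis with \Cref{cor:mconn-colliderconn}, and conclude with \Cref{prop:cond-indep-lsem-smallest}. The only difference is that you correctly flag that \Cref{thm:latent-proj-linear-system} is stated for acyclic, principally stable systems while the theorem at hand assumes only $\gG \in \DMGc(V)$ and non-singularity; the paper's sketch cites that theorem without addressing the mismatch, so your Schur-complement workaround (itself left partly unverified, as you note) is an honest patch of a step the paper also glosses over rather than a departure from its argument.
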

\begin{proof}
  By definition,
  \[
    \textnot J \mconn K \mid L \Longleftrightarrow
    \textnot V_j \mconn V_k \mid L~\text{for all}~ V_j
    \in J, V_k \in K.
  \]
  Further, because $V$ is multivariate normal with a
  positive definite covariance matrix, it is not difficult to show
  that $J \independent K \mid L$ if and only if $V_j \independent V_k
  \mid L$ for all $V_j \in J$ and $V_k \in K$. Thus it suffices to
  consider $J = \{V_j\}$ and $K = \{V_k\}$. Let $\tilde{\gG} =
  \marg_{\{V_j, V_k\} \cup L}(\gG)$, then
  \begin{align*}
    \textnot V_j \mconn V_k \mid L \ingraph{\gG} \Longleftrightarrow &~ \textnot V_j \colliderconn
                                                             V_k \ingraph{\tildegG}
                                                             \tag*{(\Cref{cor:mconn-colliderconn})}\\
    \Longrightarrow & ~V_j \independent V_k \mid L \underdist{\P},
                      \tag*{(\Cref{prop:cond-indep-lsem-smallest})}
  \end{align*}
  where in the second step we use
  \Cref{thm:latent-proj-linear-system} to verify that $(V_j,V_k,L)$
  follows a non-singular linear system with respect to $\tilde{\gG}$,
  and it is not difficult to show that $\tilde{\Lambda}$ is positive
  definite.
\end{proof}

Equation \eqref{eq:msep-lsem} is called the \emph{global Markov}
property with respect to the graph $\gG$. It also holds for
nonparametric models when the graph is acyclic but not when the graph
is cyclic; see \Cref{sec:discussion} for further discussion. Gaussian
linear systems may also have other equality constraints not in the
form of conditional independence
\parencite{sullivantTrekSeparationGaussian2010,shpitserAcyclicLinearSEMs2018}.

\subsection{Examples}
\label{sec:examples-2}

The Gaussian linear system corresponding to \Cref{fig:admg-example-proj-12345} is
  \begin{align*}
    V_1 &= E_1, \\
    V_2 &= E_2, \\
    V_3 &= \beta_{13} V_1 + \beta_{23} V_2 + E_3, \\
    V_4 &= \beta_{34} V_3 + E_4, \\
    V_5 &= \beta_{25} V_2 + \beta_{35} V_3 + \beta_{45} V_4
          + E_5,
  \end{align*}
where the noise vector $E = (E_1,\dots,E_5)$ is multivariate normal
with mean zero and a covariance matrix of the following form
\[
  \Lambda =
    \begin{pmatrix}
      \lambda_{11} & 0 & 0 & \lambda_{14} & \lambda_{15} \\
      0 & \lambda_{22} & 0 & 0 & 0 \\
      0 & 0 & \lambda_{33} & 0 & 0 \\
      \lambda_{41} & 0 & 0 & \lambda_{44} & \lambda_{45} \\
      \lambda_{51} & 0 & 0 & \lambda_{54} & \lambda_{55}
    \end{pmatrix},
\]
provided that $\Lambda$ is symmetric and positive semi-definite. We
can apply the trek rule to obtain
  \begin{align*}
    % \Cov(V_1,V_2) &= 0, \\
    \Cov(V_1,V_4) &= \Lambda_{14} + \Lambda_{11} \beta_{13}
                    \beta_{34}, \\
    \Var(V_3) &= \Lambda_{33} + \Lambda_{11} \beta_{13}^2 +
                \Lambda_{22} \beta_{23}^2, \\
    \Cov(V_3,V_4) &= \Lambda_{33} \beta_{34} + \Lambda_{14} \beta_{13} +
                    \Lambda_{11} \beta_{13}^2 \beta_{34} +
                    \Lambda_{22} \beta_{23}^2 \beta_{34} \\
                    &= \Lambda_{14} \beta_{13} + \Var(V_3) \beta_{34}.
  \end{align*}
  The last equation can also be obtained using path analysis:
  \[
    \Cov(V_3, V_4) = \sigma(V_3 \ldedge V_1 \bdedge V_4) +
    \sigma(V_3 \rdedge V_4) \Var(V_3) = \Lambda_{14} \beta_{13} +
    \Var(V_3) \beta_{34}.
  \]

The marginal linear system on $\tilde{V} = (V_1,V_2,V_4,V_5)$ can be
obtained by plugging the equation for $V_3$ into the other equations:
  \begin{align*}
    V_1 &= % \alpha_1 +
          E_1, \\
    V_2 &= % \alpha_2 +
          E_2, \\
    V_4 &= % (\alpha_4 + \beta_{34} \alpha_3) +
          \beta_{13} \beta_{34}
          V_1 + \beta_{23} \beta_{34} V_2 + (\beta_{34} E_3 + E_4), \\
    V_5 &= % (\alpha_5 + \beta_{35} \beta_3) +
          \beta_{13} \beta_{35} V_1
          + (\beta_{25} + \beta_{23} \beta_{35}) V_2 + (\beta_{35} E_3
          + E_5).
  \end{align*}
This is a linear system with respect to the marginal graph shown in
\Cref{fig:admg-example-proj-1245}.

\Cref{thm:msep-lsem} shows that this Gaussian linear system has two
conditional independences: $V_1 \independent V_2$, $V_2 \independent
V_4 \mid V_1, V_3$.

\section{Further examples}
\label{sec:further-examples}

In this Section we give some further examples to demonstrate how the
matrix algebra introduced in
\Cref{sec:matr-algebra-direct,sec:import-types-walks} empowers us to
solve non-trivial problems about graphical statistical models.

\subsection{Confounder adjustment}
\label{sec:conf-adjustm}

Confounding is a central concept in causal inference. Consider a
Gaussian linear system with respect to $\gG \in \ADMGc(V)$ and disjoint
$\{V_j\},\{V_k\}, L \subset V$. The equations in the linear system
\eqref{eq:linear-system-matrix} can be interpreted causally, that is,
we may assume that when $V_j$ is set to $v_j$ in an intervention, the
rest of the variables are determined by substituting $V_j$ with $v_j$
in all equations other than the one for $V_j$. This can be formally
described as a statistical model on the ``potential outcomes'' of the
variables using the so-called \emph{non-parametric structural equation
  model}
\parencite{pearlCausalityModelsReasoning2000,richardson2013single}. In
this case, the \emph{total causal effect} of $V_j$ on $V_k$ can be
defined as $\sigma(P[V_j \rdpath V_k])$, which is equal to the
derivative of $V_k(v_j)$ with respect to $v_j$ if $V_k(v_j)$ denote
the value of $V_k$ under the intervention described above.

The total causal effect is often estimated by the coefficient of $V_j$ in the
linear regression of $V_k$ on $V_j$ and $L$, which can be written as
\begin{equation}
  \label{eq:ols}
  \gamma(V_k, V_j \mid L) = \frac{\Cov(V_k,V_j \mid L)}{\Var(V_j \mid
    L)} = -
  \frac{(\tilde{\Sigma}^{-1})_{jk}}{(\tilde{\Sigma}^{-1})_{kk}},
\end{equation}
where $\tilde{\Sigma}$ is the marginal covariance matrix of $(V_j,
V_k, L)$. We can investigate how this regression coefficient is
related to the total causal effect by expanding the numerator and
denominator in \eqref{eq:ols}.

\begin{proposition} \label{prop:ols-gtilde}
  Consider $\gG \in \ADMGc(V)$, $\P \in \GLS^+_{\textnormal{N}}(\gG)$,
  and disjoint $\{V_j\},\{V_k\},L \subset V$. Denote $\tilde{V} =
  \{V_j, V_k\} \cup L$, $\tilde{\gG} = \marg_{\tilde{V}}(\gG)$, and
  $\tilde{\sigma} = \marg_{\tilde{V}}(\gG)$. Suppose the following
  conditions are satisfied:
  \begin{enumerate}
  \item% [(\ref{prop:ols-gtilde}.i)]
    Only $V_j \rdedge V_k$ can be a path from
    $V_j$ to $V_k$ in $\tilde{\gG}$ that is not blocked by $L$:
    \[
      P[V_j \mconn V_k \mid L \ingraph{\tildegG}] \subseteq \{V_j
      \rdedge V_k\};
    \]
  \item% [(\ref{prop:ols-gtilde}.ii)]
    $L$ contains no descendant of $V_k$ in $\tilde{\gG}$:
    $\textnot V_k \rdpath L \ingraph{\tildegG}$.
  \end{enumerate}
  Then we have $\gamma(V_k, V_j \mid L) = \tilde{\sigma}(P[V_j \rdedge
  V_k \ingraph{\tildegG}])$.
\end{proposition}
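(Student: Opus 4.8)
The plan is to push everything through \Cref{thm:latent-proj-linear-system} into the marginal graph $\tilde{\gG}$ and then read off the two relevant entries of the inverse marginal covariance, using the decomposition \eqref{eq:inverse-covariance-decomposition} and the district rule \eqref{eq:diffdist-lambda-inv-0}. First I would note that since $\gG$ is acyclic, $\GLS_{\textnormal{N}}(\gG)=\GLS_{\textnormal{PS}}(\gG)$, so $\P$ is principally stable and its $\tilde{V}$-marginal is a non-singular Gaussian linear system on $\tilde{\gG}$ with weights $\tilde{\sigma}=\marg_{\tilde{V}}(\sigma)$; as in the proof of \Cref{thm:msep-lsem} one checks that $\tilde{\Lambda}$ stays positive definite and that acyclicity makes $\Id-\tilde{\beta}$ invertible. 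Because marginalization reproduces the true law of $(V_j,V_k,L)$, the matrix $\tilde{\Sigma}$ entering \eqref{eq:ols} equals $(\Id-\tilde{\beta})^{-T}\tilde{\Lambda}(\Id-\tilde{\beta})^{-1}$, so $\gamma(V_k,V_j\mid L)=-(\tilde{\Sigma}^{-1})_{jk}/(\tilde{\Sigma}^{-1})_{kk}$ with $\tilde{\Sigma}^{-1}=(\Id-\tilde{\beta})\tilde{\Lambda}^{-1}(\Id-\tilde{\beta})^{T}$. Since $\tilde{\sigma}(P[V_j\rdedge V_k\ingraph{\tilde{\gG}}])=\tilde{\beta}_{jk}$, the goal reduces to showing $\gamma(V_k,V_j\mid L)=\tilde{\beta}_{jk}$.

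The second step is to show that $V_k$ is a sink in $\tilde{\gG}$, i.e.\ $\tilde{\beta}_{kl}=0$ for all $l$. Any child $V_l$ of $V_k$ is a proper descendant, hence $V_l\notin L$ by the second condition; since $V_l\in\tilde{V}=\{V_j,V_k\}\cup L$ and $V_l\neq V_k$ by acyclicity, this forces $V_l=V_j$. But $V_k\rdedge V_j$ would make $V_j\ldedge V_k$ an unblocked path lying in $P[V_j\mconn V_k\mid L\ingraph{\tilde{\gG}}]$ yet different from $V_j\rdedge V_k$, contradicting the first condition. With $\tilde{\beta}_{kl}=0$ for all $l$, the $k$th row satisfies $(\Id-\tilde{\beta})_{kl}=\delta_{kl}$, so expanding $\tilde{\Sigma}^{-1}=(\Id-\tilde{\beta})\tilde{\Lambda}^{-1}(\Id-\tilde{\beta})^{T}$ entrywise gives $(\tilde{\Sigma}^{-1})_{kk}=(\tilde{\Lambda}^{-1})_{kk}$ and $(\tilde{\Sigma}^{-1})_{jk}=\sum_{l}(\delta_{jl}-\tilde{\beta}_{jl})(\tilde{\Lambda}^{-1})_{lk}$.

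The third step evaluates this sum via \eqref{eq:diffdist-lambda-inv-0}: $(\tilde{\Lambda}^{-1})_{lk}\neq 0$ only when $V_l$ and $V_k$ lie in the same district, so I take a shortest bidirected path $\pi$ between them, which is a genuine path whose interior lies in $L$ (as $\tilde{V}=\{V_j,V_k\}\cup L$) and is therefore unblocked by $L$. Applied to $l=j$, this produces a bidirected $V_j$--$V_k$ path, impossible under the first condition, so the $\delta_{jl}$ contribution $(\tilde{\Lambda}^{-1})_{jk}$ vanishes. For a term with $V_j\rdedge V_l$ and $V_l\neq V_k$: if $V_j\notin\pi$ then $V_j\rdedge V_l$ followed by $\pi$ is a path from $V_j$ to $V_k$ whose non-endpoints ($V_l$, and the interior of $\pi$) are all colliders in $L$, hence unblocked and $\neq V_j\rdedge V_k$; if $V_j$ is interior to $\pi$, the bidirected sub-path from $V_j$ to $V_k$ is already unblocked. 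Either way the first condition is violated, so only $l=k$ survives, giving $(\tilde{\Sigma}^{-1})_{jk}=-\tilde{\beta}_{jk}(\tilde{\Lambda}^{-1})_{kk}$. Dividing, and using that positive definiteness of $\tilde{\Lambda}$ makes $(\tilde{\Lambda}^{-1})_{kk}\neq 0$, yields $\gamma(V_k,V_j\mid L)=\tilde{\beta}_{jk}=\tilde{\sigma}(P[V_j\rdedge V_k\ingraph{\tilde{\gG}}])$.

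The main obstacle is precisely the bookkeeping in the third step: converting the purely algebraic nonvanishing conditions ``$V_j\rdedge V_l$'' and ``$(\tilde{\Lambda}^{-1})_{lk}\neq 0$'' into an honest \emph{path} that the first condition can forbid. The clean way around it is to work with a shortest bidirected path (automatically simple) and split on whether $V_j$ lies on it, rather than pruning an arbitrary collider-connected walk; this also makes transparent why the hypothesis that the vertex set of $\tilde{\gG}$ is \emph{exactly} $\{V_j,V_k\}\cup L$ is indispensable, since it is what guarantees that every non-endpoint encountered is a collider contained in $L$.
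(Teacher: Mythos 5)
Your proposal is correct and follows essentially the same route as the paper: expand $\gamma(V_k,V_j\mid L)$ via \eqref{eq:inverse-covariance-decomposition} in the marginal system on $\tilde{\gG}$, use condition 2 (plus condition 1 and acyclicity) to make $V_k$ a sink, and use \eqref{eq:diffdist-lambda-inv-0} together with condition 1 to kill every term except $-\tilde{\beta}_{jk}(\tilde{\Lambda}^{-1})_{kk}$ in the numerator and $(\tilde{\Lambda}^{-1})_{kk}$ in the denominator. Your third step is in fact more careful than the paper's one-line ``it can be shown'' --- in particular the explicit treatment of the $V_l=V_j$ term via a shortest bidirected path, split on whether $V_j$ lies on it --- which is a welcome tightening rather than a departure.
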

\begin{proof}
  Denote $\tilde{\beta} = \tilde{\sigma}(P[V \rdedge V
  \ingraph{\tildegG}])$ and $\tilde{\Lambda} = \tilde{\sigma}(P[V
  \bdedge V \ingraph{\tildegG}])$. The first condition implies that
  $\textnot V_j \ldedge V_k \ingraph{\tildegG}$, so
  $\tilde{\beta}_{kj} = 0$.

  By applying the expansion in
  \eqref{eq:inverse-covariance-decomposition} to \eqref{eq:ols}, we
  obtain
  \begin{equation}
    \label{eq:gamma-expansion}
        \gamma(V_k, V_j \mid L) = - \frac{\sum_{V_l,V_m \in \tilde{V}}
      (\delta_{jl} - \tilde{\beta}_{jl}) (\tilde{\Lambda}^{-1})_{lm}
      (\delta_{km} - \tilde{\beta}_{km})}{\sum_{V_l,V_m \in \tilde{V}}
      (\delta_{kl} - \tilde{\beta}_{kl}) (\tilde{\Lambda}^{-1})_{lm}
      (\delta_{km} - \tilde{\beta}_{km})} := -
    \frac{\text{Num}}{\text{Denom}}.
  \end{equation}
  It can be shown that all terms for which $V_l \in L$ must be $0$,
  because any such non-zero term implies, by
  \eqref{eq:diffdist-lambda-inv-0}, a path from $V_j$ to $V_k$ not
  blocked by $L$ that is not a directed edge. %  contradicts the
  % first condition in the Theorem statement.
  The second condition
  implies that $\tilde{\beta}_{km} = 0$ for all $V_m \in L$. Thus,
  \[
    \text{Num} = - \sum_{V_m \in \tilde{V}} \tilde{\beta}_{jk}
    (\tilde{\Lambda}^{-1})_{km} (\delta_{km} - \tilde{\beta}_{km}) = -
    \tilde{\beta}_{jk} (\tilde{\Lambda}^{-1})_{kk}.
  \]
  By investigating the terms in the denominator in a similar way, we
  obtain $\text{Denom} = (\tilde{\Lambda}^{-1})_{kk}$. The conclusion
  immediately follows.
  % \[
  %   \text{Denom} = (\tilde{\Lambda}^{-1})_{kk} + 2 \tilde{\beta}_{kj}^2
  %   (\tilde{\Lambda}^{-1})_{jj}.
  % \]
  % Finally, by using the observation that at least one of
  % $\tilde{\beta}_{jk}$ and $\tilde{\beta}_{kj}$ must be zero, we
  % obtain
  % \[
  %   \gamma(V_k, V_j \mid L) = - \frac{\text{Num}}{\text{Denom}} =
  %   \frac{\tilde{\beta}_{jk}
  %     (\tilde{\Lambda}^{-1})_{kk}}{(\tilde{\Lambda}^{-1})_{kk} + 2
  %     \tilde{\beta}_{kj}^2 (\tilde{\Lambda}^{-1})_{jj}} =
  %   \tilde{\beta}_{jk}.
  % \]
  % This establishes the desired conclusion.
\end{proof}

The conditions in \Cref{prop:ols-gtilde} are stated using the marginal
graph $\tilde{\gG}$. The next result translates this to the original
graph $\gG$.

\begin{theorem} \label{thm:ols-g}
In the setting of \Cref{prop:ols-gtilde}, the two conditions in
\Cref{prop:ols-gtilde} hold if and only if the following are all true:
\begin{enumerate}
\item All paths from $V_j$ to $V_k$ in $\gG$
  that are not ancestrally blocked given $L$ must be right-directed:
  \[
    P[V_j \mconn V_k \mid_a L \ingraph{\gG}] \subseteq P[V_j \rdpath
    V_k \ingraph{\gG}].
  \]
\item $L$ contains no descendant of $V_k$ in $\gG$: $\textnot V_k
  \rdpath L \ingraph{\gG}$.
\item $L$ contains no descendant of non-endpoint vertices on any
  directed path from $V_j$ to $V_k$ given $L$: there does not exist
  $V_l \in V \setminus \tilde{V}$ such that
  $V_j \rdpath V_l \rdpath V_k \mid L \ingraph{\gG}$ and $V_l \rdpath
  L \ingraph{\gG}$.
% \item $L$ contains no vertices on a directed path from $V_j$ to $V_k$:
%   $P[V_j \rdpath V_k \ingraph{\gG}] = P[V_j \rdpath V_k \mid L
%   \ingraph{\gG}]$.
\end{enumerate}
If these conditions are met, we have $\gamma(V_k, V_j \mid L) =
\sigma(P[V_j \rdpath V_k \mid L \ingraph{\gG}])$.
\end{theorem}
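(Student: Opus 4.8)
The plan is to establish the quantitative formula first and then the equivalence of the two condition sets, throughout exploiting that $\tildegG = \marg_{\tilde{V}}(\gG)$ is again canonical and acyclic (as recorded after \Cref{prop:marg-preserve-walks-1}), and that for acyclic graphs $\GLS_{\textnormal{N}} = \GLS_{\textnormal{PS}}$, so \Cref{thm:latent-proj-linear-system} applies. For the formula, assume the two conditions hold; then \Cref{prop:ols-gtilde} already gives $\gamma(V_k,V_j\mid L) = \tilde{\sigma}(P[V_j \rdedge V_k \ingraph{\tildegG}])$. Since $\tildegG$ is acyclic the single edge is at once a walk and a path, so this equals $\tilde{\sigma}(W[V_j \rdedge V_k \ingraph{\tildegG}])$, which by the marginal-weight identity of \Cref{thm:latent-proj-linear-system} equals $\sigma(W[V_j \rdpath V_k \mid \tilde{V} \ingraph{\gG}])$. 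Because $\gG$ is acyclic every directed walk is a directed path, and an interior vertex of a $V_j \rdpath V_k$ path is neither $V_j$ nor $V_k$; hence avoiding $\tilde{V} = \{V_j,V_k\}\cup L$ internally is the same as avoiding $L$ internally, giving $\sigma(W[V_j \rdpath V_k \mid \tilde{V}]) = \sigma(P[V_j \rdpath V_k \mid L \ingraph{\gG}])$, as claimed.

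For the equivalence, condition~2 is immediate: applying \Cref{prop:marg-preserve-walks-1} to the directed-walk row with empty conditioning set and $J=\{V_k\}$, $K=L$ gives $V_k \rdpath L \ingraph{\tildegG} \Leftrightarrow V_k \rdpath L \ingraph{\gG}$, so the second conditions of \Cref{prop:ols-gtilde} and of the theorem coincide. It then remains to show, \emph{assuming} this common condition (no descendant of $V_k$ in $L$), that condition~1 of \Cref{prop:ols-gtilde} is equivalent to the conjunction of conditions~1 and~3 of the theorem. I will do this by characterizing exactly when $\tildegG$ admits a weakly-unblocked $V_j$–$V_k$ path other than $V_j \rdedge V_k$.

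The key device is the lifting correspondence underlying \Cref{cor:marg-preserve-m-connection} and \Cref{prop:marg-preserve-walks-3}: each edge of $\tildegG$ lifts to a $\gG$-arc whose interior vertices lie in $U = V \setminus \tilde{V}$ (a directed edge to a directed path, a bidirected edge to a trek), consecutive edges meet at images of colliders, and since both incident lifted arcs carry an arrowhead at a $\tildegG$-collider, every $\tildegG$-collider lifts to a genuine collider of the lifted $\gG$-walk. Using the decomposition \eqref{eq:mconn}, a forbidden $\tildegG$-path is either (a) an arc, which by \Cref{prop:marg-preserve-walks-3} is identified with a weakly-unblocked $\gG$-arc from $V_j$ to $V_k$, projecting to $V_j \rdedge V_k$ exactly when it is right-directed and to a single $\ldedge$ or $\bdedge$ edge otherwise; or (b) a path carrying at least one collider $V_m \in L$. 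For (b), lifting produces a weakly-unblocked $\gG$-\emph{walk} with a genuine collider at the image of $V_m$; either this walk reduces to a simple ancestrally-unblocked, non-right-directed $\gG$-path (the negation of condition~1 of the theorem), or it does not, in which case $V_m$ is forced to be a strict descendant in $L$ of an interior vertex $V_l \in U$ of a directed $\gG$-arc — the lifted walk then traverses $V_l \rdpath V_m \ldpath V_l$ — or of $V_k$ itself, the latter already excluded by condition~2. Thus a forbidden $\tildegG$-path exists exactly when condition~1 or condition~3 of the theorem fails, and both directions follow from this translation: a failure of condition~3 yields the explicit forbidden path $V_j \rdedge V_m \bdedge V_k$, with $V_j \rdedge V_m$ coming from $V_j \rdpath V_l \rdpath V_m \mid \tilde{V}$ and $V_m \bdedge V_k$ from the trek $V_k \ldpath V_l \bdedge V_l \rdpath V_m$, while conversely any forbidden path lifts to exhibit a failure of condition~1 or~3.

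The main obstacle is precisely case (b) and the walk-versus-path gap it creates: because marginalization collapses whole arcs, a weakly-unblocked $\tildegG$-path lifts only to a weakly-unblocked $\gG$-\emph{walk}, and such a walk in general cannot be reduced to a simple ancestrally-unblocked $\gG$-path carrying the same collider. This is exactly why condition~1 of the theorem, a statement about $\gG$-paths, must be supplemented by condition~3, a statement about descendants of interior vertices of directed $\gG$-paths; the delicate part of the argument is proving that these descendant-activated colliders are the only $\tildegG$-colliders not already witnessed by a genuine ancestrally-unblocked $\gG$-path, so that conditions~1, 2, and~3 together account for every forbidden path in $\tildegG$.
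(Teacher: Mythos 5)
Your proposal is correct and follows essentially the same route as the paper: lift a forbidden path in $\tildegG$ to its pre-image walk in $\gG$, reduce by shortcuts to an ancestrally unblocked path, and show that the only way this reduction can destroy the non-right-directed witness is via a vertex $V_l \in V\setminus\tilde{V}$ on a directed path from $V_j$ to $V_k$ that has a descendant in $L$ --- exactly the content of condition 3. You are more explicit than the paper about the converse direction and the final formula (the paper treats both as routine), and the ``delicate part'' you flag at the end is precisely the step the paper compresses into ``this is only possible if there exists $V_l$ such that $w \in W[V_j \rdpath V_l \mconn V_l \rdpath V_k \mid \tilde{V}]$''.
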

\begin{proof}

  Note that, by definition and acyclicity, $P[V_j \rdpath V_k \mid_a L
  \ingraph{\gG}]$ is the pre-image of $V_j \rdedge V_k$ under
  $\marg_{\tilde{V}}$. The only non-trivial fact is that the first
  condition in \Cref{prop:ols-gtilde} is implied by the conditions in
  this Theorem. If that is not true, consider any
  \[
    \tilde{w} \in P[V_j \mconn V_k \mid_a L \ingraph{\tildegG}]
    \setminus \{V_j \rdedge V_k\}.
  \]
  Let $w$ be a pre-image of $\tilde{w}$ under $\marg_{\tilde{V}}$, so
  $\marg_{\tilde{V}}(w) = \tilde{w}$. Because $\tilde{w}$ is a path,
  $V_j$ and $V_k$ only appear once in $w$. This shows that
  \[
    w \in W[V_j \mconn V_k \mid \tilde{V} \ingraph{\gG}] \setminus W[V_j
    \rdpath V_k \mid \tilde{V} \ingraph{\gG}].
  \]
  By taking necessary
  shortcuts to remove duplicated vertices in $w$, we obtain a path
  $w'$ from $V_j$ to $V_k$ that is not ancestrally blocked by $L$. By
  the first condition in this Theorem, $w' \in P[V_j \rdpath V_k \mid L
  \ingraph{\gG}]$. This is only possible if there exists $V_l \in V$
  such that
  \[
    w \in W[V_j \rdpath V_l \mconn V_l \rdpath V_k \mid \tilde{V}
    \ingraph{\gG}].
  \]
  Because the last instance of $V_l$ in $w$ is not a collider, we know
  $V_l \not \in \tilde{V}$ and the first instance of $V_l$ in $w$ is
  also not a collider. This shows that $V_j \rdpath V_l \rdpath V_k
  \mid L$ and $V_l \rdpath L \ingraph{\gG}$, which contradicts the
  third condition in the Theorem statement.
\end{proof}

Given the conditions in \Cref{thm:ols-g} and the additional
requirement that $L$ contains no vertices on a directed path from
$V_j$ to $V_k$, we see that $\gamma(V_k, V_j \mid L)$ identifies the
total causal effect $\sigma(P[V_j \rdpath V_k])$. This is exactly the
criterion in \textcite{shpitserValidityCovariateAdjustment2010} which
implies the backdoor criterion in
\textcite{pearlCausalDiagramsEmpirical1995}. \textcite{shpitserValidityCovariateAdjustment2010}
consider the nonparametric identification problem with sets of cause
and effect variables and showed the criterion above is sound and
complete for identifying the total causal effect via a nonparametric
confounder adjustment formula that generalizes \eqref{eq:ols}. The
consideration of controlled direct effect (not mediated by $L$) in
\Cref{thm:ols-g} is possibly novel.

The conditions in \Cref{prop:ols-gtilde} and \Cref{thm:ols-g} are
asymmetric in the potential cause $V_j$ and effect
$V_k$. Under the assumption that $L$ contains no descendant of
$V_j$ and $V_k$, that is
\begin{equation}
  \label{eq:no-descendant}
  \textnot  \{V_j, V_k\} \rdpath L \ingraph{\gG},
\end{equation}
\textcite{guoConfounderSelectionIterative2023} propose to
characterize ``no confounding'' graphically using the following
symmetric criterion
\begin{equation}
  \label{eq:no-confounding}
P[V_j \confpath V_k \mid L \ingraph{\gG}] = \emptyset,
\end{equation}
and use this to develop an iterative algorithm for confounder
selection. Equation \eqref{eq:no-confounding} means that there is no
path from $V_j$ to $V_k$ not ancestrally blocked by $L$ that is
consisted of one or several ``confounding arcs'' $\confarc$. Formally,
define
\[
  P[V \confpath V \mid X] = P[V \confpath V] \cap W[V \mconn V \mid L],
\]
where
\begin{equation}
  \label{eq:confarc}
  W[V \confarc V] = W[V \mconnarc V] \setminus W[V \rdpath V]
  \setminus W[V \ldpath V],~P[V \confarc V] = W[V \confarc V] \cap
  \mathcal{P}_{\gG},
\end{equation}
\begin{equation}
    \label{eq:confpath}
W[V \confpath V] = \sum_{q=1}^{\infty} (W[V \confarc V])^q,~P[V
\confpath V] = W[V \confpath V] \cap \mathcal{P}_{\gG}.
\end{equation}
Consider the following result.

\begin{theorem} \label{thm:symmetric-backdoor}
  Consider $\gG \in \ADMGc(V)$, $\P \in \GLS^+_{\textnormal{N}}(\gG)$,
  and disjoint $\{V_j\},\{V_k\},L \subset V$. Suppose
  \eqref{eq:no-descendant} and \eqref{eq:no-confounding} are
  satisfied. Then
  \[
    V_k \rdpath V_j \ingraph{\gG} \Longrightarrow \gamma(V_k,V_j \mid
    L) = \sigma(P[V_j \rdpath V_k \ingraph{\gG}]).
  \]
\end{theorem}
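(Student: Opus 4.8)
The plan is to derive this theorem from \Cref{thm:ols-g}: I would show that the two hypotheses \eqref{eq:no-descendant} and \eqref{eq:no-confounding}, together with the assumed directed connection $V_j \rdpath V_k$, imply all three conditions of \Cref{thm:ols-g}. That Theorem then delivers $\gamma(V_k, V_j \mid L) = \sigma(P[V_j \rdpath V_k \mid L \ingraph{\gG}])$, and a short descendant argument upgrades this to $\sigma(P[V_j \rdpath V_k \ingraph{\gG}])$. Two of the three conditions are almost immediate. The second condition of \Cref{thm:ols-g}, namely $\textnot V_k \rdpath L$, is exactly the part of \eqref{eq:no-descendant} asserting that $L$ has no descendant of $V_k$. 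For the third condition, any vertex $V_l$ lying on a directed path $V_j \rdpath V_l \rdpath V_k$ is a descendant of $V_j$; if such a $V_l$ also satisfied $V_l \rdpath L$, then $L$ would contain a descendant of $V_j$, contradicting \eqref{eq:no-descendant}. Hence no offending $V_l$ exists.

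The crux is the first condition: every path $w \in P[V_j \mconn V_k \mid_a L \ingraph{\gG}]$ must be right-directed. I would write $w$ in its unique decomposition into maximal arcs $a_1, \dots, a_n$ meeting at colliders $c_1, \dots, c_{n-1}$. Since $w$ is not ancestrally blocked, each collider satisfies $c_i \in L$ or $c_i \rdpath L$, while each non-colliding non-endpoint lies outside $L$. The key claim is that if $w$ is not a single right-directed arc, then every $a_i$ is a confounding arc in the sense of \eqref{eq:confarc}, so that $w \in P[V_j \confpath V_k \mid L \ingraph{\gG}]$, contradicting \eqref{eq:no-confounding}. Indeed, $a_1$ ends with an arrowhead at the collider $c_1$, so it is right-directed or confounding; a right-directed $a_1$ would make $c_1$ a descendant of $V_j$ that is an ancestor of (or lies in) $L$, contradicting \eqref{eq:no-descendant}, so $a_1$ is confounding, and the symmetric argument at the $V_k$-end forces $a_n$ to be confounding. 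Each interior arc carries arrowheads at both of its endpoints (both being colliders) and therefore cannot be purely right- or left-directed, so it too is confounding.

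The main obstacle is precisely this collider bookkeeping. I expect the delicate points to be justifying that an activated collider adjacent to a right-directed end arc really does produce a forbidden descendant in $L$, and that the ``arrowheads at both ends'' characterisation excludes exactly the purely directed arcs, which is where \eqref{eq:confarc} is used. The remaining case is $n = 1$, a single arc $w$: it cannot be left-directed, since $V_j \ldpath V_k$ together with the directed path $V_j \rdpath V_k$ would form a directed cycle in violation of acyclicity, and it cannot be a lone confounding arc by \eqref{eq:no-confounding}, leaving $w$ right-directed. This secures the first condition.

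Once all three conditions hold, \Cref{thm:ols-g} gives $\gamma(V_k, V_j \mid L) = \sigma(P[V_j \rdpath V_k \mid L \ingraph{\gG}])$. To finish, I would observe that every directed path from $V_j$ to $V_k$ has only intermediate vertices that are descendants of $V_j$; by \eqref{eq:no-descendant} none of them lies in $L$, so no such path is blocked by $L$ and therefore $P[V_j \rdpath V_k \mid L \ingraph{\gG}] = P[V_j \rdpath V_k \ingraph{\gG}]$. Applying $\sigma$ to both sides yields the stated identity $\gamma(V_k, V_j \mid L) = \sigma(P[V_j \rdpath V_k \ingraph{\gG}])$.
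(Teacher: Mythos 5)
Your argument is correct, but it takes a genuinely different route from the paper's. The paper proves the theorem by direct computation in the marginal linear system on $\{V_j,V_k\}\cup L$: it argues that \eqref{eq:no-confounding} forces $\textnot V_j \samedist V_k$ in the marginal graph, hence $(\tilde{\Lambda}^{-1})_{jk}=0$ by \eqref{eq:diffdist-lambda-inv-0}, substitutes this together with \eqref{eq:no-descendant} into the expansion \eqref{eq:gamma-expansion} to obtain the symmetric formula \eqref{eq:ols-coef-no-direction}, and only at the very end uses acyclicity and the directed connection to kill $\tilde{\beta}_{kj}$. You instead reduce everything to \Cref{thm:ols-g} by verifying its three conditions graphically; the heart of your proof is the maximal-arc decomposition showing that any path in $P[V_j \mconn V_k \mid_a L]$ other than a right-directed one must consist entirely of confounding arcs (the end arcs because a directed end arc would put a descendant of $V_j$ or $V_k$ into $\overline{\an}(L)$, the interior arcs because they carry arrowheads at both of their collider endpoints), contradicting \eqref{eq:no-confounding}. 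Both approaches succeed; yours avoids redoing the covariance algebra and is a nice exercise in the arc calculus, but it forfeits the intermediate identity \eqref{eq:ols-coef-no-direction}, which the paper uses immediately after the proof to argue that $\gamma(V_k,V_j\mid L)\neq 0$ whenever $\tilde{\beta}_{jk}$ or $\tilde{\beta}_{kj}$ is nonzero, and it abandons the $j \leftrightarrow k$ symmetry at the outset rather than at the last step. Two caveats, neither fatal: (i) like the paper's own proof, you read the antecedent as $V_j \rdpath V_k$ although the statement literally writes $V_k \rdpath V_j$ (the statement appears to contain a typo); (ii) your contradiction with \eqref{eq:no-confounding} relies on the ``not ancestrally blocked'' reading of $P[V_j \confpath V_k \mid L]$ given in the paper's prose rather than on the literal formula $P[V \confpath V]\cap W[V \mconn V \mid L]$ --- under the literal walk-blocking reading, a collider of your path that is merely an ancestor of $L$ would keep the path out of that set and no contradiction would arise. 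The ancestral reading is the one under which the theorem is actually true, so your choice is the right one, but it deserves an explicit remark.
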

\begin{proof}
  Consider the marginal graph $\tilde{\gG}$ and the marginal linear
  system as in \Cref{prop:ols-gtilde}. It can be shown that
  \eqref{eq:no-confounding} implies
\[
  \textnot V_j \samedist V_k \ingraph{\tildegG},
\]
so by \eqref{eq:diffdist-lambda-inv-0} we know
$(\tilde{\Lambda}^{-1})_{jk} = 0$.
  By using \eqref{eq:no-descendant} and the acyclicity of $\gG$ (so
  $\tilde{\beta}_{jk} \tilde{\beta}_{kj} = 0$),
  \eqref{eq:gamma-expansion} can be immediately simplified as
  \begin{equation}
    \label{eq:ols-coef-no-direction}
      \gamma(V_k, V_j \mid L) = \frac{(\tilde{\Lambda}^{-1})_{jk} -
    (\tilde{\Lambda}^{-1})_{jj} \tilde{\beta}_{kj} -
    (\tilde{\Lambda}^{-1})_{kk}
    \tilde{\beta}_{jk}}{(\tilde{\Lambda}^{-1})_{kk} - 2
    (\tilde{\Lambda}^{-1})_{jk} \tilde{\beta}_{kj} +
    (\tilde{\Lambda}^{-1})_{jj} \tilde{\beta}_{kj}^2} = -
  \frac{(\tilde{\Lambda}^{-1})_{jj} \tilde{\beta}_{kj} +
    (\tilde{\Lambda}^{-1})_{kk}
    \tilde{\beta}_{jk}}{(\tilde{\Lambda}^{-1})_{kk} +
    (\tilde{\Lambda}^{-1})_{jj} \tilde{\beta}_{kj}^2}.
  \end{equation}
By acyclicity of $\gG$, the assumption $V_j \rdpath V_k \ingraph{\gG}$
implies that $\textnot V_k \rdpath V_j \ingraph{\gG}$ and thus
$\tilde{\beta}_{kj} = 0$. Because $L$ contains no descendant of $V_j$,
we obtain
\[
  \gamma(V_k, V_j \mid L) = \tilde{\beta}_{jk} = \tilde{\sigma}(W[V_j \rdpath
  V_k \ingraph{\tildegG}]) = \sigma(W[V_j \rdpath V_k \mid \tilde{V}
  \ingraph{\gG}]) = \sigma(P[V_j \rdpath V_k \ingraph{\gG}])
\]
as stated in the Theorem.
\end{proof}

Equation \eqref{eq:ols-coef-no-direction} in the proof above shows
that the regression coefficient $\gamma(V_k, V_j \mid L) \neq 0$
whenever $\tilde{\beta}_{jk}$ or $\tilde{\beta}_{kj}$ is
nonzero. Thus, the symmetric criteria in \eqref{eq:no-descendant} and
\eqref{eq:no-confounding} are sufficient to ``remove confounding'',
and interpreting the regression coefficient of $V_k$ on $V_j$ or $V_j$
on $V_k$ (given $L$) as a causal effect entirely rests on the presumed
direction of causality.

\subsection{Augmentation criterion for m-separation}
\label{sec:augm-crit-m}

There are two basic types of graphical statistical models: directed
and undirected. To see how these models are related, let us introduce
some additional notation. Let $\UG(V)$ denote the collection of all
simple undirected graphs with vertex set $V$; specifically, $\UG(V)$
contains all graphs $\gG = (V, \sE)$ such that $\sE \subseteq V \times
V$, $(V_j, V_j) \not \in \sE$,
and $(V_j,V_k) \in \sE$ implies that $(V_k,V_j) \in \sE$ for all
$V_j,V_k \in V$. This definition is not different from a bidirected
graph besides the requirement of no self-loops, but the semantics of
undirected and bidirected graphs are different. Specifically, for
$\gG \in \UG(V)$ and disjoint subsets $J, K, L \subset V$, we say
$L$ \emph{separate} $J$ and $K$ in $\gG$ and write
\[
  \textnot J \uconn K \mid L \ingraph{\gG},
\]
if every path from a vertex in $J$ to a vertex in $K$ contains an
non-endpoint in $L$.

Consider the following \emph{augmentation} map from directed mixed
graphs to undirected graphs:
\begin{align*}
  \augg: \DMGc(V) &\to \UG(V), \\
         \gG &\mapsto \augg(\gG),
\end{align*}
where $\augg(\gG)$ is an undirected graph with the same vertex set $V$
such that
\begin{equation}
  \label{eq:augmented-edge}
    V_j \udedge V_k \ingraph{\augg(\gG)} \Longleftrightarrow V_j
  \colliderconn V_k \ingraph{\gG}~\text{for all}~V_j, V_k \in V, V_j
  \neq V_k.
\end{equation}
That is, $V_j$ and $V_k$ are adjacent in the augmentation graph if and
only if they are connected by a sequence of colliders in the original
graph.  When this map is restricted to (canonically) directed graphs,
this is known as \emph{moralization} in the literature because it
connects any two parents with the same child. In that case, the
augmentation map is directly motivated by the factorization properties
associated with directed acyclic graphs and undirected graphs.

The next key result relates separation in directed mixed graphs with
separation in the augmented undirected graph. This result was
originally stated for directed acyclic graphs in
\textcite[Proposition 3]{lauritzenIndependencePropertiesDirected1990},
but their proof has gaps. \textcite[Theorem
1]{richardson03_markov_proper_acycl_direc_mixed_graph} first stated
and proved this results for directed mixed graphs. Below we give a
more ``visual'' proof using the matrix algebra introduced above.

\begin{proposition} \label{prop:mconn-uconn}
For any $\gG \in \DMGc(V)$ and disjoint $J, K, L \subset V$, we have
\[
  J \mconn K \mid L \ingraph{\gG} \Longleftrightarrow J \uconn K \mid L
  \ingraph{\augg \circ \marg_{\tilde{V}}(\gG)},
\]
where $\tilde{V} = \overline{\an}(J \cup K \cup L)$.
\end{proposition}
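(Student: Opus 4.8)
The plan is to reduce the set statement to singletons, push the problem onto the ancestral marginal, and then prove the equivalence by decomposing walks into collider-connected pieces. First I would note that both sides are witnessed by single pairs: $J \mconn K \mid L$ holds iff $V_j \mconn V_k \mid L$ for some $V_j \in J$, $V_k \in K$ (as recorded just after the definition of m-separation), and $J \uconn K \mid L$ holds iff some such pair is joined by a path avoiding $L$. Since $\{V_j,V_k\}\cup L \subseteq J\cup K\cup L \subseteq \tilde V$, the marginal graph $\gG' := \marg_{\tilde V}(\gG)$ is the same for every choice of pair, so it suffices to fix $V_j,V_k$ and work in $\gG'$. Because $\gG$ is canonical, each bidirected loop survives (it is a trek from a vertex to itself blocked by nothing), so $\gG' \in \DMGc(\tilde V)$; and by \Cref{cor:marg-preserve-m-connection} we have $V_j \mconn V_k \mid L \ingraph{\gG} \Leftrightarrow V_j \mconn V_k \mid L \ingraph{\gG'}$. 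The task thus becomes proving $V_j \mconn V_k \mid L \ingraph{\gG'} \Leftrightarrow V_j \uconn V_k \mid L \ingraph{\augg(\gG')}$, where now $\tilde V$ is \emph{ancestral}, i.e.\ every vertex of $\gG'$ is an ancestor of $J\cup K\cup L$.

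For the forward implication I would argue directly and ``visually''. Take an m-connecting walk $w$ from $V_j$ to $V_k$ given $L$ in $\gG'$ and list its non-collider vertices in order as $V_j=y_0,y_1,\dots,y_p=V_k$. By the blocking definition the interior $y_t$ lie outside $L$, while every vertex strictly between two consecutive $y_t$ is a collider and hence lies in $L$. A short case check on arrowheads---a collider carries an arrowhead on each incident edge, so two adjacent colliders must be joined by a bidirected edge---shows that each segment $y_t \cdots y_{t+1}$ has exactly the form $(\rdedge)?(\bdedge)^\ast(\ldedge)?$, i.e.\ it is a collider-connected walk, so $y_t \colliderconn y_{t+1} \ingraph{\gG'}$ and therefore $y_t \udedge y_{t+1} \ingraph{\augg(\gG')}$. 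The sequence $y_0 \udedge \cdots \udedge y_p$ is then a walk in the augmented graph whose interior vertices all avoid $L$; reducing it to a path preserves this, giving $V_j \uconn V_k \mid L$.

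The converse is the hard direction, and the place where the choice $\tilde V=\overline{\an}(J\cup K\cup L)$ is essential. Starting from a $u$-path $z_0 \udedge \cdots \udedge z_m$ with $z_1,\dots,z_{m-1}\notin L$, I would expand each edge $z_i \udedge z_{i+1}$ into a witnessing collider-connected walk $\rho_i$ in $\gG'$ and concatenate the $\rho_i$ into a single walk $W$ from $V_j$ to $V_k$. Non-endpoint vertices of $W$ that lie outside $L$ and are not colliders cause no trouble; the obstruction is the ``bad'' colliders of $W$ that lie outside $L$ (these are interior colliders of some $\rho_i$ or the junction vertices $z_i$). Here I use that each such $c$ lies in $\overline{\an}(J\cup K\cup L)$, hence has a descendant in $J\cup K\cup L$. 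If $c$ has a descendant in $L$, I reroute $W$ through a \emph{shortest} directed path $c \rdpath \ell$ to the nearest $\ell \in L$ and back, turning $c$ into a repeated non-collider and activating $\ell$ as a collider, the detour's interior avoiding $L$ by minimality; this is precisely the walk trick illustrated in the text by $V_1 \rdedge V_3 \rdedge V_4 \ldedge V_3 \ldedge V_2$. If $c$ has no descendant in $L$ but one in $J\cup K$, then a directed path from $c$ to that endpoint automatically avoids $L$, and I splice $W$ directly into $V_j$ or $V_k$, shortcutting past $c$.

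The main obstacle is the bookkeeping of this repair: one must process the bad colliders in a suitable order, ensure that no rerouting or shortcut creates new bad colliders, and argue termination (e.g.\ by inducting on the number of vertices of $W$ outside $\overline{\an}(L)$, or by passing to a minimal connecting walk). To keep this clean I would, as an alternative, first invoke \Cref{cor:mconn-colliderconn} to replace ``$V_j \mconn V_k \mid L \ingraph{\gG'}$'' by ``$V_j \colliderconn V_k \ingraph{\marg_{\{V_j,V_k\}\cup L}(\gG')}$'', so that the interior colliders one must produce are \emph{forced} into $L$ by construction (they are the non-endpoints of a collider-connection in a marginal whose only non-endpoints are the elements of $L$); the repair above is then exactly what supplies such a collider-connection from the augmented path. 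Once $W$ has been turned into an m-connecting walk given $L$, the stated equivalence follows, and undoing the reductions of the first paragraph completes the proof.
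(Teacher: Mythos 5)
Your proposal is correct and follows essentially the same route as the paper: reduce to the ancestral marginal via \Cref{cor:marg-preserve-m-connection}, segment an m-connecting walk at its non-colliders for the forward direction, and for the converse expand each augmented edge into a collider-connected walk and repair each collider outside $L$ using ancestrality (a detour to $L$ and back, or a shortcut into $J$ or $K$). The bookkeeping you flag as the main obstacle is handled in the paper simply by choosing the auxiliary directed walks to be unblocked given $J \cup K \cup L$ (so no new problematic vertices appear) and observing that each repair strictly decreases the number of colliders not in $L$, so no appeal to \Cref{cor:mconn-colliderconn} is needed.
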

\begin{proof}
  By Corollary \ref{cor:marg-preserve-m-connection}, m-separation in
  $\gG$ is equivalent to m-separation in $\marg_{\tilde{V}}(\gG)$.
  It is obvious that $\tilde{V}$ is ancestral, so the marginal graph
  $\marg_{\tilde{V}}(\gG)$ is simply the subgraph of $\gG$ restricted
  to $\tilde{V}$. So, without loss of generality, we can assume
  \[V = \tilde{V} = \overline{\an}(J \cup K \cup L).\]

  $\Longrightarrow$ direction: consider $w \in W[J
  \mconn K \mid L \ingraph{\gG}]$. Whenever $w$ contains a collisder,
  it can always be separated by segments of one or multiple
  consecutive colliders. By taking shortcuts that bypass those
  colliders, we obtain a walk
  from a vertex in $J$ to a vertex in $V_k$ in the augmented
  undirected graph whose non-endpoints are not in $L$ (because the
  non-endpoints are non-colliders in $w$).
  For example, if $w$ is the following walk in $\gG$
  % \[
  %   V_j \rdedge V_{l_1} \ldedge V_{m_1} \rdedge V_{l_2} \bdedge
  %   V_{l_3} \ldedge V_{m_2} \bdedge V_k
  % \]
  \[
    \begin{tikzcd}
      V_j \arrow[rd]  & & V_{m_1} \arrow[ld]
      \arrow[rd]  & & & V_{m_2} \arrow[ld]
      \arrow[r, leftrightarrow] & V_k \\
      & V_{l_1} & & V_{l_2} \arrow[r, leftrightarrow] & V_{l_3} &
    \end{tikzcd}
  \]
  the corresponding walk in the augmented undirected graph is $V_j
  \udedge V_{m_1} \udedge V_{m_2} \udedge V_k$. By further trimming
  duplicated non-endpoints in this walk (if they exist), we can find a
  path in $P[J \uconn K \mid L \ingraph{\augg(\gG)}]$.

  $\Longleftarrow$ direction: consider a path
  \[
    V_{m_0} \udedge V_{m_1} \udedge \dots \udedge V_{m_q},~q \geq 1
  \]
  in $\augg(\gG)$ from $J$ to $K$ (so $V_{m_0} = V_j \in J$ and
  $V_{m_q} = V_k \in
  K$) that does not pass through $L$. By \eqref{eq:augmented-edge}, we have
  \[
    V_{m_r} \udedge V_{m_{r+1}} \ingraph{\augg(\gG)} \Longleftrightarrow
    V_{m_r} \colliderconn V_{m_{r+1}} \ingraph{\gG}.
  \]
  Therefore, there exists a walk $w$ from $V_j$ to $V_k$ in
  $\gG$ such that all non-collider on the walk are not in
  $L$. Consider any collider $V_l$ on this walk that is not already in
  $L$, so $V_l \not \in L$ and
  \[
    w = w_1 \cdot w_2,~\text{where}~w_1 \in W[V_j \halfsquigfull \ast
    \fullsquigfull V_l]~\text{and}~w_2 \in W[V_l \fullsquigfull \ast
    \fullsquighalf V_k].
  \]
  Let $w'$ be a walk obtained as follows:
  \begin{enumerate}
  \item If $V_l \in J$, then $w' = w_2$.
  \item If $V_l \in K$, then $w' = w_1$.
  % \item If $V_l \in L$, then $w' = w$.
  \item If $V_l \not \in J \cup K \cup L$ but $V_l \rdpath J \mid J
    \cup K \cup L$, then
    \[
      w'= w_1' \cdot w_2~\text{for some}~w_1'
      \in W[J \ldpath V_l \mid J \cup K \cup L].
    \]
  \item If $V_l \not \in J \cup K \cup L$ but $V_l \rdpath K \mid J
    \cup K \cup L$, then
    \[
      w'= w_1 \cdot w_2'~\text{for some}~w_2'
      \in W[V_l \rdpath K \mid J \cup K \cup L].
    \]
  \item If $V_l \not \in J \cup K \cup L$ but $V_l \rdpath L \mid J
    \cup K \cup L$, then
    \[
      w' = w_1 \cdot w_3' \cdot (w_3')^T \cdot
    w_2~\text{for some}~w_3' \in W[V_l \rdpath L \mid J \cup K
    \cup L].
    \]
  \end{enumerate}
  Exactly one of the above five scenarios is true because $J,K,L$ are
  disjoint and $V = \overline{\an}(J \cup K \cup L)$.
  It is easy to see that $w'$ is still a walk from $J$ to $K$, all
  non-colliders in $w'$ are not in $L$, and $w'$ has one fewer
  collider than $w$ that is not in $L$. By repeating this operation,
  we eventually obtain a walk in $W[J \mconn K \mid L \ingraph{\gG}]$.
\end{proof}

\section{Discussion}
\label{sec:discussion}

Perhaps the most useful aspect of the matrix algebra introduced here
is its ability to visualize complex graphical concepts. Many examples
are given above: m- and d-connections,
collider-connected walks, confounding paths, etc. Some further
examples are $\delta$-connection
\parencite{didelezGraphicalModelsMarked2008}, which can be expressed
as $\deltaconn$, and $\mu$-connection
\parencite{mogensenMarkovEquivalenceMarginalized2020}, which can be
expressed as $\muconn$. Compared to d/m-connections, they require the
last edge to have an arrowhead pointing to the end-point and thus
reflect ``predictability for the future'' in considering local
dependence (aka Granger causality) in Markov processes.

As mentioned in the Introduction, many results for nonparametric
graphical models have their origins in Gaussian linear systems. This
can be observed in several places of the present article:
\begin{itemize}
\item \Cref{thm:msep-lsem} shows that Gaussian linear systems are
  global Markov (meaning m-separation in the graph implies conditional
  independence between random variables). The same is true in
  nonparametric DAG models in the sense that any distribution that
  factorizes according to the DAG are also global Markov with respect
  to the DAG. This basically follows from \Cref{prop:mconn-uconn} and
  the Hammersley-Clifford theorem for undirected graphs; see
  \textcite[section 3.2.2]{lauritzenGraphicalModels1996}. However, the
  story is much more complicated for nonparametric ADMG models
  \parencite[see
  e.g.][]{richardsonNestedMarkovProperties2023}. Further, a stronger
  notion of graph separation is needed for nonparametric models when
  the graph is cyclic
  \parencite{spirtesDirectedCyclicGraphical1995,bongersFoundationsStructuralCausal2021}.
\item \Cref{thm:latent-proj-linear-system} shows that the marginal of
  a Gaussian linear system is still a Gaussian linear
  system. Likewise, if $\P$ is global Markov with respect to a
  directed mixed graph, then its marginal is global Markov with
  respect to the corresponding marginal graph. This basically follows
  from \Cref{cor:marg-preserve-m-connection}.
\item As mentioned below \Cref{thm:ols-g}, the conditions there for
  Gaussian linear systems are exactly the same as the generalized
  backdoor criterion for the identification of total causal effect in
  nonparametric causal models
  \parencite{shpitserValidityCovariateAdjustment2010}.
\end{itemize}
As mentioned in the Introduction, there are many more connections in
the literature. The exact relation between algebraic properties of
Gaussian linear systems and nonparametric graphical models require
further investigation.

\section*{Acknowledgement}

The author thanks Richard Guo, Wenjie Hu, and Thomas Richardson for
helpful discussion. This research is in part supported by the
Engineering and Physical Sciences Research Council under EP/V049968/1.

% \begin{refcontext}% [sorting=nyt]
  \printbibliography
% \end{refcontext}

\clearpage

\appendix

\section{Dictionary and \LaTeX\ macros of the walk algebra}

A dictionary of some special types of walks on a directed mixed graph
can be found \Cref{tab:graph-convention}.

The \texttt{graph-separation} package provides a number of useful
macros and is available for download
at
\url{https://www.statslab.cam.ac.uk/~qz280/files/graph-separation.sty}. The
basic commands provided by \texttt{graph-separation} are
straight and squiglly lines with no, half, or full arrowheads at the
two ends.
\begin{center}
\begin{tabular}{llll}
  $\nostraigno$ & \verb|\nostraigno| & $\nosquigno$ & \verb|\nosquigno| \\
  $\nostraigfull$ & \verb|\nostraigfull| & $\nosquigfull$ & \verb|\nosquigfull| \\
  $\fullstraigno$ & \verb|\fullstraigno| & $\fullsquigno$ & \verb|\fullsquigno| \\
  $\halfstraighalf$ & \verb|\halfstraighalf| & $\halfsquighalf$ & \verb|\halfsquighalf| \\
  $\halfstraigfull$ & \verb|\halfstraigfull| & $\halfsquigfull$ & \verb|\halfsquigfull|  \\
  $\fullstraighalf$ & \verb|\fullstraighalf| & $\fullsquighalf$ & \verb|\fullsquighalf| \\
  $\fullstraigfull$ & \verb|\fullstraigfull| & $\fullsquigfull$ & \verb|\fullsquigfull|
\end{tabular}
\end{center}
From these we can derive other types of walks that may contain an
arbitrary number of colliders. The package provides macros for the
most common types listed below.
\begin{center}
  \begin{tabular}{llll}
  $\udedge$ & \verb|\udedge| &
  $\noudedge$ & \verb|\noudedge| \\
  $\rdedge$ & \verb|\rdedge| &
  $\nordedge$ & \verb|\nordedge| \\
  $\ldedge$ & \verb|\ldedge| &
  $\noldedge$ & \verb|\noldedge| \\
  $\bdedge$ & \verb|\bdedge| &
  $\nobdedge$ & \verb|\nobdedge| \\
  $\rdpath$ & \verb|\rdpath| &
  $\nordpath$ & \verb|\nordpath| \\
  $\ldpath$ & \verb|\ldpath| &
  $\noldpath$ & \verb|\noldpath| \\
  $\trek$ & \verb|\tconnarc| or \verb|\trek| &
  $\notrek$ & \verb|\notconnarc| or \verb|\notrek| \\
  $\mconnarc$ & \verb|\mconnarc| &
  $\nomconnarc$ & \verb|\nomconnarc| \\
  $\dconnarc$ & \verb|\dconnarc| &
  $\nodconnarc$ & \verb|\nodconnarc| \\
  $\confarc$ & \verb|\confarc| &
  $\noconfarc$ & \verb|\noconfarc| \\
  $\samedist$ & \verb|\samedist| &
  $\confpath$ & \verb|\confpath| \\
  $\colliderconn$ & \verb|\colliderconn| &
  $\markovblanket$ & \verb|\markovblanket| \\
  $\uconn$ & \verb|\uconn| & $\tconn$ & \verb|\tconn| \\
  $\mconn$ & \verb|\mconn| &
  $\dconn$ & \verb|\dconn| \\
  $\muconn$ & \verb|\muconn| &
  $\deltaconn$ & \verb|\deltaconn| \\
\end{tabular}
\end{center}

\begin{table}[h] \renewcommand{\arraystretch}{1.3}
  \centering
  \caption{A dictionary of the walk algebra generated by an acyclic
    directed mixed graph.}
  \label{tab:graph-convention}

  \begin{tabular}{lll}
    \toprule
    Notation & Def. & Explanation \\
    \midrule
    $W[V \rdedge V]$ & \eqref{eq:directed-edges} & directed edges \\
    $W[V \bdedge V]$ & \eqref{eq:bidirected-edges} &
                                                     bidirected edges \\
    $W[V \rdpath V]$ & \eqref{eq:directed-walk} & (right-)directed walks \\
    $W[V \ldpath V]$ & \eqref{eq:left-directed-walk} &
                                                       (left-)directed walks \\
    $W[V \trek V]$ & \eqref{eq:trek} & treks \\
    $W[V \dconnarc V]$ & \eqref{eq:dconn-walk} & d-connected walks \\
    $W[V \mconnarc V]$ & \eqref{eq:mconn-walk} & m-connected walks
                                                 (aka arcs) \\
    $P[V \confarc V]$ & \eqref{eq:confarc} & confounding arcs \\
    $W[V \rdpath V \mid L]$ &
                              \eqref{eq:unblocked-directed-walks}
                          & directed walks not blocked by $L$ \\
    $W[V \trek V \mid L]$ &
                            \eqref{eq:unblocked-treks}
                    & treks not blocked by $L$ \\
    $P[V \trek V \mid L]$ & \eqref{eq:trekpath-given-L} & treks not
                                                          blocked by
                                                          $L$ that are
    paths \\
    $P[V \dconnarc V~\text{via root}~V_r]$ & \eqref{eq:dconn-arc-root} &
                                                                       d-connected
                                                                       paths
                                                                       via
                                                                       root
                                                                       $V_r$
    \\
    $W[V \colliderconn V]$ & \eqref{eq:collider-conn} &
                                                        collider-connected
                                                        walks \\
    $W[V \samedist V]$ & \eqref{eq:samedist} &
                                                        a walk
                                               consisting of
                                               bidirected edges \\
    $W[V \mconn V \mid L]$ & \eqref{eq:mconn} & m-connected walks
                                                given $L$ \\
    $W[V \tconn V \mid L]$ &
                             \eqref{eq:tconn}
                          & t-connected walks given $L$ \\
    $P[V \mconn V \mid_a L ]$ &
                                \eqref{eq:mconnpath-given-L}
                          & m-connected paths not ancestrally
                            blocked by $L$ \\

    $P[V \dconn V \mid_a L ]$ &
                                \eqref{eq:dconnpath-given-L}
                          & d-connected paths not ancestrally
                            blocked by $L$ \\
    $P[V \confpath V]$ & \eqref{eq:confpath} & confounding paths \\
    \bottomrule
  \end{tabular}
\end{table}

\end{document}